\DeclareMathAlphabet\mathbfcal{OMS}{cmsy}{b}{n}
\def\lc{\ensuremath{\mathsf{lc}}}
\def\LM{\ensuremath{\textsc{lm}}}
\def\prem{\ensuremath{\mathsf{prem}}}
\def\pquo{\ensuremath{\mathsf{pquo}}}
\def\coeff{\ensuremath{\mathsf{coeff}}}
\def\Sr{\ensuremath{\mathsf{Sres}}}
\def\S{\ensuremath{\mathsf{S}}}
\def\dreg{\ensuremath{\mathsf{d}_{\mathrm{reg}}}}
\def\DEG{\ensuremath{\mathsf{DEG}}}
\def\tdeg{\ensuremath{\mathrm{tdeg}}}
\def\cat{\ensuremath{\mathsf{~cat~}}}
\def\l{\ensuremath{\langle}}
\def\r{\ensuremath{\rangle}}
\def\calP{\ensuremath{\mathcal{P}}}
\def\calF{\ensuremath{\mathcal{F}}}
\def\calG{\ensuremath{\mathcal{G}}}
\def\calL{\ensuremath{\mathcal{L}}}
\def\xgcd{\ensuremath{\mathrm{xgcd}}}
\def\gcd{\ensuremath{\mathrm{gcd}}}
\def\out{\ensuremath{\mathtt{OUT}}}
\def\SToGB{\ensuremath{\mathtt{SubresToGB}}}
\def\SToGBD5{\ensuremath{\mathtt{SubresToGB\_D5}}}
\def\SToGBA{\ensuremath{\mathtt{SubresToGB\_Aux}}}
\def\Res{\ensuremath{\mathrm{Res}}}
\def\LNN{\ensuremath{\mathtt{LastNonNil}}}
\def\LNND5{\ensuremath{\mathtt{LastNonNil\_D5}}}
\def\invNil{\ensuremath{\mathtt{invertNil}}}
\def\WF{\ensuremath{\mathtt{WeierstrassForm}}}
\def\MFD5{\ensuremath{\mathtt{MonicForm\_D5}}}
\def\WFD5{\ensuremath{\mathtt{WeierstrassForm\_D5}}}
\def\MF{\ensuremath{\mathtt{MonicForm}}}
\def\DecD5{\ensuremath{\mathtt{ExactDec\_D5}}}
\def\exD5{\ensuremath{\mathtt{Extract\_D5}}}
\def\isol{\ensuremath{\mathtt{IsolFactor}}}
\def\HL{\ensuremath{\mathtt{HenselLift}}}
\def\MQ{\ensuremath{\mathtt{MusserQ}}}
\def\bfS{\ensuremath{\mathbf{S}}}
\def\bfZ{\ensuremath{\mathbf{Z}}}
 \def\bfW{\ensuremath{\mathbf{W}}}
 \def\bMb{\ensuremath{\mathbf{Mb}}}
 \def\bMa{\ensuremath{\mathbf{Ma}}}
 \def\bfG{\ensuremath{\mathbf{G}}}
\def\Z{\ensuremath{\mathbb{Z}}}
\def\N{\ensuremath{\mathbb{N}}}
\def\sqfp{\ensuremath{\mathrm{sqfp}}}
\def\fa{\ensuremath{\mathfrak{a}}}
\def\fm{\ensuremath{\mathfrak{m}}}
\newtheorem{Lem}{Lemma}
\newtheorem{Prop}{Proposition}
\newtheorem{Theo}{Theorem}
\newtheorem{Cor}{Corollary}
\theoremstyle{definition}
\newtheorem{Def}{Definition}
\theoremstyle{definition}
\newtheorem{ex}{Example}
\theoremstyle{remark}
\newtheorem{Rmk}{Remark}
\begin{document}

\begin{frontmatter}
  \title{Lexicographic Gr\"obner bases of  bivariate polynomials modulo a univariate one}

\author{Xavier Dahan}
\address{Tohoku University, IEHE. 41 Kawauchi, Aoba ward, Sendai. 980-8576, Japan}

\ead{xdahan@gmail.com}



\begin{abstract}
Let $T(x)\in k[x]$ be a monic non-constant polynomial
and write $R=k[x]/\l T\r$
the quotient ring. 
Consider two bivariate polynomials $a(x, y), b(x, y)\in
R[y]$.
In a first part,  $T = p^e$ is assumed to be the power of an irreducible
polynomial $p$. A new algorithm  that computes  a minimal lexicographic Gr\"obner basis of the ideal
$\l a,\ b, \ p^e\r$, is introduced.
A second part
extends this algorithm when $T$ is general through the ``local/global''
principle realized by a generalization of ``dynamic evaluation'',
restricted so far to a polynomial $T$ that is squarefree.
The algorithm produces splittings according to the case distinction ``invertible/nilpotent'',
extending the usual ``invertible/zero'' in classic dynamic evaluation.
This algorithm belongs to the Euclidean family, 
the core being  a  subresultant sequence of $a$
and $b$  modulo $T$. 
In particular no factorization or Gr\"obner basis computations are necessary.
The theoretical background relies on Lazard's structural
theorem for lexicographic Gr\"obner bases in two variables.
An implementation is realized in Magma. Benchmarks
 show  clearly the benefit, sometimes important, of this approach
compared to the  Gr\"obner bases approach.
\end{abstract}

\begin{keyword}
Gr\"obner basis \sep lexicographic order \sep dynamic evaluation \sep subresultant
\end{keyword}
\end{frontmatter}


\section{Introduction}

\subsection{Context and results}
Gr\"obner bases are a major tool to solve and manipulate
systems of polynomial equations, as well  as computing in
their quotient algebras.
Modern and most efficient algorithms rely on linear
algebra on variants of Macaulay matrices~\cite{F4, F5}.
Another class of methods, the triangular decomposition, rely on some broad generalizations
of the Euclidean algorithm, as initiated by Ritt~\cite{Ri32}, Wu~\cite{WWT86}, and pursued by many researchers
(see surveys~\cite{Hu1, ChenMMM11, wang2012elimination} for details).
Starting from an input system $\calF$,
these methods produce a family of {\em triangular sets}  $(T_i)
_i$ which enjoy  the elimination property and satisfies $\cap_i \l T_i\r\simeq  \l \calF\r$. 
In dimension zero, these sets form a particular subclass of lexicographic Gr\"obner bases
(lexGb thereafter).
This representation is well-understood and implemented since already more than two decades,
especially for radical ideals. Although several attempts to {\em represent multiplicities}
have come out~\cite{Cheng2014,Li2003multiplicities,Marcus2012-short,Alvandi2015-short}
they are limited in scope, and resort to sophisticated concepts
while obtaining partial information only.
Triangular sets, hence standard triangular decomposition methods,
 cannot in general 
produce an  ideal preserving decomposition: for example a mere
primary ideal in dimension zero is not triangular in general
(think of 
the primary ideal  $\l y^2 + 3 x , x y + 2 x , x^2\r$
of associated prime $\l x,y\r$;
it is a  reduced lexGb for $x \prec y$ and does not generate a  radical ideal).
However, the underlying key algorithmic concepts
may still be relevant, and the present work
shows it is the case, yet in a particular situation.

The algorithms proposed in this work
consider lexGbs instead of triangular sets.
Although it also builds upon the Euclidean algorithm
and hence can be compared to triangular decomposition
algorithms,
it goes only half-way when decomposing. A {\em splitting}
follows from those of polynomials in $x$ only, by generalizing dynamic evaluation.
In particular, no decomposition is produced in the first part, when $T=p^e$
is the power of an irreducible polynomial.
One can  expect  to decompose along the $y$-variable too, but this requires
dynamic evaluation in two variables.
Nonetheless, the methods introduced here  pave the way toward such decompositions.
Let us illustrate this by a toy example.

 \begin{ex}[Computing a lexGb]\label{ex:trois}
From now on p.r.s stands for (subresultant) pseudo-remainder sequence.
Below it is computed modulo $t_1$ (Eq.~\eqref{eq:deux}) or modulo $t_1'$ (Eq.~\eqref{eq:2bis}), $\S_i(a,\ b)$ refers to the $i$-th subresultant of $a \bmod t_1$ and $b \bmod t_1$.
See Definition~\ref{def:sub} for details.
{\small \begin{equation}\label{eq:deux}
\hspace{-8pt} \left\{
\begin{array}{l}
a:=(y+x) y (y+1+x)(y-1) \\
b:= (y+x) (y+1-x)\\
t_1 := x^2
\end{array}
\right.
\left\|
\begin{array}{l}
\underline{{\rm prs}(a,\ b) \bmod t_1}\\
    a \quad \rightarrow\quad b\quad \rightarrow\quad  \S_1(a,\ b)=- 4 x y \quad \rightarrow\quad  \S_0(a,\ b)=0
\end{array}
\right.
\end{equation}}

The last non-zero subresultant  $\S_1(a,\ b)=-4 x y$ is nilpotent modulo $t_1$.
We factor out $-4 x$ (it is obvious here, in general realized through
a removal of nilpotent coefficients, and Weierstrass factorization),
and  divide $- 4 x y$ by $-4 x$ and $t_1$ by $\frac{-4x}{-4}=x$
in order to make them monic.
We obtain now a monic polynomial $c:=y=\frac{-  4 x y}{-4 x}$ modulo $t_1':=\frac{t_1}x = x$.
We restart  then a subresultant p.r.s
of $(b \bmod t_1'(x))$ and $(c \bmod t_1'(x))$
modulo $t_1'(x)=x$,
while keeping record of the division by $x$.
{\small \begin{equation}\label{eq:2bis}
\hspace{-8pt} \left\{
\begin{array}{l}
\bar{b}:=b \bmod t_1'  = y  (y+1) \\
\bar{c}:=c \bmod t_1'  = y \\
t_1'(x) := x
\end{array}
\right.
\left\|
\begin{array}{l}
\underline{{\rm prs}(\bar{b},\ \bar{c}) \bmod t_1'}\\
  y^2 + y \quad \to
  \quad     \S_1(\bar{b},\ \bar{c})=y \quad \to\quad   \S_0(\bar{b},\ \bar{c})=0.
\end{array}
\right.
\end{equation}
}

The last non-zero subresultant $\S_1(\bar{b},\ \bar{c})=y$ is a gcd of $\bar{b}$ and $\bar{c}$ (modulo $t'_1(x)=x$) 
so that $\l \bar{b},\ \bar{c},\ x\r = \l y, \ x\r$.
Note that $[\S_1(\bar{b},\ \bar{c}),\ x] = [y,\ x]$ is a lexGb of $\l x, \ \bar{c},\ \bar{b}\r$.
A minimal lexGb of $\l a,\ b,\ x^2\r$ is then obtained by multiplying by $x$ (of which we have kept record)
the lexGb $[y,\ x]$
and concatenating $b$ (this is Line~\ref{SToGB:ret4} of Algorithm~\ref{algo:SToGB}).
$$
\left[x \cdot x,\  x \cdot y \right] \cat \left[b\right] =  \left[ x^2,\ x y\ , \ y^2 + (3x +1 ) y + x \right].
$$
This lexGb of $\l a,\ b,\ x^2\r$ is  minimal but not reduced. 
If necessary, it suffices to compute
adequate normal forms to obtain the reduced one.
We will consider lexGbs as output by the algorithms,
hence minimal Gr\"obner bases but not necessarily reduced.
\hfill \qed
\end{ex}

As this example shows, several new steps come into play to handle the computation
of the subresultant p.r.s. modulo $T=p^e=x^2$: removal of nilpotent (Algorithm~\ref{algo:WF}) and
Weierstrass factorization (Algorithm~\ref{algo:MQ}).
These two algorithms transform a non monic polynomial modulo $p^e$
to an equivalent monic one (Algorithm~\ref{algo:MF}). In this way, pseudo divisions can be carried through
to retrieve the ``first nilpotent'' and the ``last non nilpotent'' polynomials in a modified subresultant p.r.s
(Algorithm~\ref{algo:LNL}).
It suffices then to iterate the above process to compute a minimal lexGb (Algorithm~\ref{algo:SToGB}).

\paragraph{Generalizing dynamic evaluation}
In a second part of the article, $T$ can be any monic polynomial.
The most interesting cases are when a (monic) large degree factor of $T$
is also a factor of the resultant of $a$ and $b$. 
If $T$ has no common factor or only a small common factor,
the degree of the ideal $\l a,\ b,\ T\r$ is small compared to the degrees of the input polynomials
and 
while the algorithms proposed work in this case, they are slower than Gr\"obner bases.

This however covers most important situations, for example when
the input are two polynomials $a,\ b$ without a modulus $T$.
Then we take $T=\Res_y(a,b)$ the resultant itself.
In the experimental section~\ref{sec:exp} (Columns 4 \&  7 of the tables),
timings show indeed that computing the resultant (Column~3) and applying the algorithms of this work (Column~4)
reveals  faster or simply competitive with computing a lexGb  of $\l a,\ b\r$ (Column~5).
Computing the squarefree decomposition of the resultant (Column~6)
and applying the algorithms of this work is always much faster.
 
When $T$ is not the power of an irreducible polynomial,
Weierstrass factorization does not apply in general and making polynomials monic in order to restart
subresultant computations becomes impossible ``globally'' (that is modulo $T$, while modulo primary factors  $p^e$,
it does).
Applying the ``local/global'' philosophy of classic dynamic evaluation fails here too, the polynomial $T$ being not
squarefree. We show that it is possible to extend it to a general $T$
with splittings of type ``invertible/nilpotent'', instead of the standard ``invertible/zero''.
The algorithms developed in the first part to treat the local case are then rewritten under this new dynamic evaluation
paradigm. The output are {\em families} of lexGbs, deduced from the splittings
that occur when attempting a division by a non-invertible element.

\begin{ex}\label{ex:newD5} (See also Algorithm~\ref{algo:invNil} ``$\invNil$'', Example~\ref{ex:invNil})
Consider a non-zero polynomial $f\in k[x]/\l T\r$.
If $T$ is irreducible, $f$ is invertible, being non-zero. The Extended Euclidean algorithm permits to find it.
If $T$ is squarefree not irreducible and $\gcd(f,\ T)=g$ then $f$ is invertible modulo $\frac{T}{g}$ and zero modulo $g$
(the ``invertible/zero'' dichotomy).
This leads to an isomorphism $k[x]/\l T\r \simeq k[x]/\l T/g\r \times k[x]/\l g\r $,
as in classic dynamic evaluation.

If $T$ is not squarefree like $T=x^3(x+1)^2$, then $f$ may still not be invertible modulo $\frac{T}{\gcd(f,\ T)}$.
Take for example $f=x^2(x+2)$. Then $g_0=\gcd(T,\ f)=x^2$ and $\frac{T}{g_0}=x (x+1)^2$.
$f$ is not invertible modulo $\frac{T}{g_0}$. Consider next $g_1=\gcd(\frac{T}{g_0},\ g_0)=\gcd(x (x+1)^2, \ x^2)=x$.
This time $f$ is invertible modulo $\frac{T}{g_1 g_0} =(x+1)^2$,
and nilpotent modulo $g := g_0 g_1 = x^3$
(the ``invertible/nilpotent'' dichotomy). Moreover, $k[x]/\l T\r \simeq k[x]/\l g\r \times k[x]/\l T/g\r$.
In the invertible branch, we  can pursue computations (typically invert a coefficient
of a polynomial in $(k[x])[y]$), and in the nilpotent branch, work as introduced in the first part when $T=p^e$.
\hfill $\Box$
\end{ex}



\paragraph{Main results}
In the first part, Algorithm~\ref{algo:SToGB} ``$\SToGB$'' and Theorem~\ref{th:SToGB}
show how to compute a minimal lexGb of an ideal $\l a,\ b,\ T\r$,
where $a,\ b\in k[x,y]$ and $T=p^e$ is the power of an irreducible polynomial $p\in k[x]$.

In the second part, the input are polynomials $a,\ b\in k[x, y]$ and a monic non-constant polynomial $T\in k[x]$.
They verify the assumption
\begin{equation}\label{tag:H}
\text{for any primary factor } p^e\text{ of } T,
\quad
a{\rm~and~} b {\rm~are~ not~nilpotent~modulo~} p^e.\tag{H}
\end{equation}
Algorithm~\ref{algo:SToGBD5} ``$\SToGBD5$''  and Theorem~\ref{th:SToGBD5}
computes, on input $a,\ b,\ T$, a family of minimal lexGbs $(\calG_i)_i$
such that $\prod_i \l \calG_i\r = \l a,\ b,\ T\r$. The product is a direct one.
More precisely, $\l \calG_i  \cap k[x] \r + \l \calG_j \cap k[x]\r =\l 1\r$ for $i\not=j$.

The outcome translates to  faster computations of (a direct product of)
lexGbs of an ideal $\l a,\ b,\ T\r$
than the  Gr\"obner engine of Magma~\cite{Magma}, one of the  fastest available,
especially when $T$ is a factor of the resultant of
$a$ and $b$ having multiplicities.
\medskip

There is no difference if in Algorithm~\ref{algo:SToGB} ``$\SToGB$'' $p$ is a prime number instead of an irreducible polynomial,
and if  $a, b\in \Z[y]$.
The  obtained family  of lexGbs is made of  {\em strong} Gr\"obner bases of the ideal $\l a,\ b,\ p^e\r \subset  \Z[y]$.
Indeed, Hensel lifting, Weierstrass factorization, Lazard's structural theorem~\cite[Section~4.6]{AL94}
all hold in this context too.

In Algorithm~\ref{algo:SToGBD5} it is similarly possible to replace $T$ by an integer $n$ and to consider polynomials
$a,\ b\in \Z[y]$. The output is then a family of coprime lexGbs whose (direct)
product equals the ideal $\l a,\ b,\ n\r\subset \Z[y]$.
These lexGbs are {\em strong} Gr\"obner bases.

\paragraph{Motivation}
There exists a variety of methods to represent the {\em solutions} of a system of polynomials.
The most widespread are probably  Gr\"obner bases,
triangular-decomposition methods, primitive element representations (among
which the RUR~\cite{Ro99} and the Kronecker representation~\cite{GiLeSa01} have received the most attention),
homotopy continuation method~\cite{PHCpack,Bates2013}, multi-resultant/eigenvector methods~\cite{Auz88,Emiris02}. However, when it comes to representing faithfully the ideal generated by the input polynomial,
it remains essentially the Gr\"obner bases only. Some of the aforementioned methods
have the ability to represent a multiplicity of a solution, which is just a number,
however this is nowhere near to be ideal preserving.
This work is somehow affiliated to the triangular-decomposition method,
having the Euclidean algorithm at its core.
It thus constitutes  a first incursion to  ideal preserving methods based on the Euclidean algorithm.

Primary decomposition constitutes another clear motivation.
Being factorization free, this work nor its generalizations can compute
directly a primary decomposition. Rather, it yields a decomposition at a cheap cost,
so that  a primary decomposition algorithm can then be run on each component. 
Finding a decomposition efficiently is indeed a well-known speed-up in the realm
of  primary
decomposition~\cite[Remark~2]{decker1999primary}.
For example, the {\tt PrimaryDecomposition} command of Magma tries to compute a
triangular decomposition
from a Gr\"obner basis (apparently following~\cite{La92})
in order to reduce the cost of internal subsequent routines such as factorization.
It is not known how to do this for non-radical ideals.
In addition, the lexicographic monomial order plays a crucial role in GTZ-like
primary decomposition algorithms~\cite{GTZ88}.
As mentioned above, in this work decompositions follow only from polynomials in $x$,
so comparisons would be premature. We would rather wait that decompositions following the $y$ variables
are developed, which essentially amounts to dynamic evaluation in two variables.

\paragraph{Direct product of lexGbs} While our algorithms naturally compare to Gr\"obner bases' ones,
the output differs in that it is a {\em family} of lexGbs.
However, this is not a drawback. Indeed it carries more information, like an intermediate representation toward the primary decomposition.
Normal forms, for example to test ideal membership,  can be done componentwise.
It is also possible to perform other standard ideal operations.
Besides, when the coefficients are rational numbers,
their size are smaller: this stems from the fact that one lexGb can be reconstructed from
the family output by our algorithm,
through the Chinese remainder theorem, which induces a growth in the size of the coefficients.
See~\cite{DaSc04} for details.
Additionally, decomposing a lexGb for solving has been known to be advantageous since
a long time~\cite{La92}.

\subsection{Previous and related work}
\paragraph{On the general method}
There exists a quite dense literature about representations of the {\em solutions}
of a system of polynomials $a,\ b\in k[x,y]$, whether they are {\em simple}
or not. 

One line of research builds upon a subresultant sequence.
It started with~\cite{gonzalez1996improved} by Gonz\`alez-Vega and El Kahoui,
with several forthcoming articles improving the idea. In the background of these works lies
the topology of plane (real) curves defined by polynomials over the rationals.
The analysis of the bit-complexity is thus a central aspect, and the forthcoming works aim at
improving it \cite{diochnos2009asymptotic,BoLaMoPoRoSa16,LaPoRo17,li2011modpn}.
From a subresultant sequence of $a$ and $b$,
it computes a {\em triangular decomposition} of the common set of solutions of $a$ and $b$.
A key step is to take the squarefree part of the resultant. It
thus does not consider representation of multiplicities.

Another direction considers  {\em shearing} of coordinates, for representations
in term of primitive element (RUR)~\cite{BoLaMoPoRoSa16,MeSc16}. It should be noted
that primitive element representations cannot be ideal preserving~\cite[Remark~3.1]{Ro99}.

\medskip

As for general triangular decomposition methods, some have studied the representation
of multiplicities. In most references below the multiplicity is equal to the dimension
of a certain local algebra.
In the bivariate case, \cite{Cheng2014}
proposes algebraic cycles and primitive p.r.s. to find the multiplicity of each primary components.
A preliminary study of deformation techniques is found in~\cite{Li2003multiplicities} that computes  a certain multiplicity number.
The {\em intersection multiplicity} through Fulton's algorithm was investigated in
the bivariate case in~\cite{Marcus2012-short}. The complexity of this approach remains unclear.
It was extended in~\cite{Alvandi2015-short} to multivariate system by reducing to the bivariate ones, through a method
dealing with the tangent cone.

The present work distinguishes from the above in that it faithfully produces an ideal preserving representation,
which contains far more information than a multiplicity number,
yet relying ultimately on the classic routine: the subresultant.
To compare with other formal techniques supplied with the ideal preserving feature,
I am only aware of Gr\"obner bases computations.

\paragraph{About the subresultant algorithm}
Algorithm~\ref{algo:LNL} ``$\LNN$'' presents a subresultant algorithm computed modulo the power
of an irreducible polynomial $T=p^e$.  If the input is written $a,\ b,\ T$ then
the output written $u, \ v,\ T_1,\ U$ with $ U T_1=T$, and $u, \ v\in k[x,y]$ monic (in $y$)
polynomials, satisfy  $\l a,\ b,\ T\r = \l u, \ v T_1,\ T\r$. 
$u$ is a monic gcd of $a$ and $b$ modulo $T$ if there exists one, in which case $v=0$
(such a gcd does not necessarily  exist see~\cite{Da17}).
Otherwise, $v\not=0$  and this algorithm provides a weak version of the gcd, with the ideal preserving feature.
The terminology $\LNN$ reflects the fact that $u$ is the last non-nilpotent polynomial ``made monic'' in the modified p.r.s,
while $v$ if the first nilpotent one ``made monic'' too (we assume that $0$ is a nilpotent element in this article).

Moroz and Schost~\cite{MoSc16} propose to compute the resultant modulo $x^e$, that is when $p(x)=x$
with our notation. It shares some similar tools,  Weierstrass factorization and Hensel lifting.
However, their purpose is to write a quasi-linear complexity estimate,
rather than producing a practical and neat algorithm. The idea amounts to adapt the half-gcd algorithm that runs
asymptotically in quasi-linear time,
to this particular context --- whereas here it is the standard quadratic time subresultant algorithm that is used.
The possibility to compute subresultants along with the resultant, required by Algorithm~\ref{algo:LNL}, is left open in~\cite{MoSc16}.
The present article aims rather at  practical algorithms and implementations.
The algorithms presented here being new, they must demonstrate their practical efficiency,
and be as simple as possible to  check their correctness and to be implemented.
Despite this, the description is already quite technical, and the possibility
to incorporate faster routines such as the half-gcd for computing subresultants
appears 
to be an interesting challenge for future work.
The algorithms described are all implemented ``as such''  in Magma~\cite{Magma} (see \url{http://xdahan.sakura.ne.jp/gb.html}),
and in many cases outperform the Gr\"obner engine of Magma, yet equipped with  one
of the best implementation of the F4 and FGLM algorithms.

Recently, faster algorithms for the computation of the resultant of $a,b$ have come
out~\cite{Vi-resultant-2018,VdH-resultant2020}.
It does so by bypassing the computation of a p.r.s sequence --- whereas here the p.r.s. is essential.
These outcomes imply directly an algorithm that computes a lexGb of the ideal $\l a,\ b\r$
within the same complexity. However, they require  strong generic assumptions: the lexGb
must consist  of two polynomials, and the  unique monic (in $y$)
polynomial shall be of degree one. These assumptions lie
far away of the present work that deal with lexGbs containing many polynomials
of arbitrary degree. 

\medskip

The algorithms of this article adapt straightforwardly to  univariate polynomials $a$ and $b$ that have coefficients in $\Z$.
When $T=p^e$ with $p$ a prime number, one can think  of $p$-adic polynomials, $e$ being the precision.
A related work~\cite{caruso2017numerical} in this situation  studies the stability of the subresultant
algorithm to compute a ($p$-adic) approximate gcd. It should be mentioned that this gcd is not monic, nor
have an invertible leading coefficient, hence is of limited practical interest.
The subresultant algorithm therein does not hold the crucial functionalities
(Hensel lifting, Weierstrass factorization) that make polynomials monic.

\paragraph{About dynamic evaluation}
The second part of this article involves dynamic evaluation.
In its broad meaning, this technique automatically produces case distinctions depending
on the values of some parameters in an equation.
More precisely here, the two algebraic equations are the polynomials $a$ and $b$ in the variable $y$,
the parameter being $x$.
The case distinctions comes from  the image of their coefficients (in $x$)
modulo each primary factor of $T$.
In this context, classic dynamic evaluation considers $T$ squarefree;
A primary factor is then of the form $x-\alpha$ (at least in the algebraic closure)
for a root $\alpha$ of $T$: this corresponds to  the evaluation of the parameter at the value $\alpha$.
This work
allows {\em Taylor expansions} at $\alpha$ at order $e-1$ if the primary
factor of $T$ is of degree $e$ namely is $(x-\alpha)^e$.
The case distinction still follows from the different roots of $T$,
thus the different Taylor expansions considered should be at different expansion points.

The computational paradigm of dynamic evaluation, introduced in~\cite{D5} is quite simple.
A  {\em squarefree} polynomial $T$ is split by a gcd computation
when attempting to perform an inversion modulo $T$, see Example~\ref{ex:newD5}.
It is not surprising that such splittings  actually appeared  before~\cite{D5} in the realm
of integer factorization, as  Pollard did with his rho method~\cite{pollard1974}.
In the algebraic context, dynamic evaluation, also coined ``D5 principle''~\cite{D5},
has been quite thoroughly studied, especially toward
manipulation and representation of algebraic numbers~\cite{Du94},
generalization to multivariate polynomials through the triangular representation~\cite{MMMOo95}, becoming 
a major component of some triangular decomposition algorithms~\cite{LeMMMXi05}. 
Another standard reference is~\cite{Mo03}.
Concerning complexity results, we refer to~\cite{DaMMMScXi06} that brings fast univariate arithmetic
to this multivariate context, or more recently to~\cite{van2020directed}.
The splitting can also be performed in an ideal-theoretic way through Gr\"obner bases computations, see~\cite{Noro06}, but at
a probably  too expensive cost to be competitive with what gcd computations afford.
Except this latter article,
all these  previous works manipulate squarefree polynomials only, the present work being to the best of my knowledge, the first addressing general univariate polynomials.

\subsection{Organization}
The first part (Sections~\ref{sec:prem}-\ref{sec:lF}) considers $T$ equal to the power of an irreducible polynomial.
Section~\ref{sec:prem} is made of preliminaries: Lazard's structural theorem,
Weierstrass factorization through Hensel lifting, subresultant p.r.s are recalled.
The following  section~\ref{sec:lF}
introduces the main subresultant routine ``\LNN'',
and shows how to deduce a minimal (not reduced in general)
lexGb of $\l a,\ b ,\ p^e\r$.
The second part considers $T$ general and is treated in Sections~\ref{sec:D5}-\ref{sec:lexGbD5}.
This consists essentially in translating the algorithms presented in the first part 
into a dynamic evaluation that works when $T$ is not necessarily squarefree.
Section~\ref{sec:D5} introduces the dichotomy ``invertible/nilpotent''
and applies it to the algorithms that  make polynomials monic.
It is followed in Section~\ref{sec:lexGbD5} by the generalization
of the algorithm ``$\LNN$'' that computes the ``monic form'' of the last nilpotent/first nilpotent
polynomials in the modified subresultant p.r.s of $a$ and $b$, and by the computation of families of lexGbs
whose (direct) product is $\l a,\ b,\ T\r$.
An experimental  section~\ref{sec:exp}  shows the benefits of an implementation realized in Magma
of this approach, compared to Gr\"obner basis computations. 
A special emphasize is put on the case of an input $a$, $b$
without a modulus $T$ in Section~\ref{sec:compl} due to the importance
of this case and the questions it raises. 
The last section~\ref{sec:concl}
describes some improvements for future work and  various generalizations.

As one can see, the second part encompasses the algorithms proposed in the first part,
restricted to when $T=p^e$. Is this first part then really necessary ?
Well, rewriting {\em known} algorithms to the {\em classic} dynamic evaluation paradigm is one thing,
rewriting  {\em new} algorithms in a {\em new} dynamic evaluation is another one.
Without the first part, the algorithms of the second
part would become  too obscure.
Besides, some of these algorithms  rely on those restricted to the ``local case'' $T=p^e$ dealt with in the first part,
in order to alleviate the proofs.

\paragraph{Notations and convention} Below are gathered some notations used in this paper:
\begin{itemize}[-]
  \setlength\itemsep{-2pt}
\item $\sqfp(f)$: squarefree part of a polynomial $f\in k[x]$.
\item a polynomial $p$ {\em strictly} divides another polynomial $q$ if $p|q$ and $p\not=q$.
\item $\calP$ denotes the set of irreducible polynomials in $k[x]$. Given $p
\in \calP$ and a polynomial $T\in k[x]$, $v_p(T)\in \N$  is the largest integer such that $p^{v_p(T)} | T$.
\item an ideal generated by a family of polynomials $\calL$ is denoted $\l \calL\r$.
\item $k$ will denote a perfect field: irreducible polynomials are squarefree in any algebraic extension of $k$.
\item a product of ideals is direct if the ideals are pairwise coprime. Unless otherwise specified, all
  products of ideals will be direct.
\item all lexicographic Gr\"obner bases are monic, meaning that their leading coefficient is $1$.
\item although zero is usually not considered a nilpotent element,
  it is convenient to assume it is one.
\item a  value  output by an algorithm is ignored by writing ``$\_\_$'' (underscore) instead of that value.
\item algorithms are written as pseudo-codes. There is no obvious shortcut that would allow
  to simplify them furthermore.  Most routines are indeed elementary, and 
it essentially amounts to recursive calls,
and case distinctions.
The proof of correctness follows the lines of the algorithms.
  They are all self-contained though, copiously commented which should allow to read them
  without too much effort.
\end{itemize}

\section{Preliminaries\label{sec:prem}}
We consider a perfect field $k$, 
and a monic non-constant polynomial $T\in k[x]$.
Write $R=k[x]/\l T\r$ the quotient ring.
The other input are two polynomials $a$, $b$ in $k[x,y]$ reduced modulo $T$.
The lexicographic monomial order with $x \prec y$ is put on the monomials of $k[x,y]$.
In the first part Sections~\ref{sec:prem}-\ref{sec:lF}
$T$ is a power of an irreducible polynomial $p$,
$T=p^e$.

\subsection{Nilpotents}

\begin{Def}
  Let $T$ be a non-constant monic polynomial in $k[x]$. An element $r\in R = k[x]/\l T\r$
  is invertible if and only if $\gcd(T,r)=1$.
  It is nilpotent if all irreducible factors of $T$ divide $r$. Equivalently,
if the squarefree part $\sqfp(T)$ of $T$ divides $r$.
\end{Def}

The following result is classical and elementary:

\medskip

\noindent {\bf Proposition.}
{\em A polynomial $a\in R[y]$ is {\em nilpotent} if 
and only if all its coefficients are  nilpotent in $R$, or equivalently,
if $\sqfp(T)$ divides $a$.}

\begin{ex}[nilpotent element]
Let $r=x^2 (x+1)$. Then $r$ is  nilpotent modulo $T=x^4$, modulo $T=(x+1)^4$, 
and modulo $T=x^3 (x+1)^2$,  but not modulo $T=x^2 (x-1)^2$. \hfill \qed 
\end{ex}

\begin{ex}[nilpotent polynomial]\label{ex:nilcont}
Let $T=x^2(x+1)^2$ and $P=x(x+1) y + 2 x (x+1) (x-1)$. Then $P$ is nilpotent since its coefficients
$x(x+1)$ and $2 x (x+1)(x-1)$ are all nilpotents. \hfill \qed  
\end{ex}

\subsection{About lexicographic Gr\"obner bases}
The lexicographic monomial order $\prec$
with $x\prec y$ is a total order on the monomials of $k[x,y]$
defined as $x^a y^b \prec x^c y^d$ if
$b<d$, or $b=d$ and $a<c$. Given $f\in k[x,y]$ a non-zero polynomial, the notation $\LM(f)$ will refer
to the leading monomial for $\prec$ among the monomials occuring in $f$.
A lexGb of a polynomial system $(f_1,\ldots,f_m)$ generating an ideal $I$
is any family of polynomials $(g_1,\ldots,g_s)\subset I$ such that 
for any $f\in I$ there is a $g_j$ such that $\LM(g_j)|\LM(f)$. If $k$ is simply an Euclidean ring,
variations exist, and the one corresponding to this definition is called a {\em strong Gr\"obner basis}.
The Gr\"obner basis is {\em minimal} if for all $1\le i \le s$, $\LM(g_i)$ is not divided by $\LM(g_j)$ for any $j\not=i$.

In 1985, Lazard~\cite{Laz85}  completely characterized lexGbs in $k[x,y]$,
result now known as ``Lazard's structural theorem''.
It will be used in the following form:
\begin{Theo}\label{th:Laz}
 \ {\upshape (A)}\  Let monic non-constant  polynomials   $h_1,\ h_2 ,\ldots,\  h_{\ell-1}\in k[x]$ 
such that $h_i$ divides $h_{i-1}$ for $i=2,\ldots,\ \ell-1$. 
Let also  monic (in $y$) polynomials   $g_2,\ldots,\ g_{\ell}\in (k[x])[y]$
  of {\em increasing degree}: $\deg_y(g_2)<\deg_y(g_3)<\cdots < \deg_y(g_\ell)$.
  The list of polynomials 
$$
\calL = [h_1,\ h_2 g_2,\ \ldots,\ h_{\ell-1} g_{\ell-1}  ,\  g_\ell]
$$
is a lexGb if and only if:
\begin{equation}\label{eq:Laz}
g_i  \in \l g_{i-1} , \frac{h_{i-2}}{h_{i-1}} g_{i-2} ,\ \ldots, \frac{h_1}{h_{i-1}}\r,\ \text{for}~3 \le i\le \ell
 \end{equation}
It is moreover {\em minimal} if and only if $h_i$ {\em strictly} divides $h_{i-1}$ for $i=2,\ldots, \ell-1$.

{\upshape (B)} Assume now that $\calL$ above is  already a  lexGb.
Given $h_0\in k[x]$ a monic non-constant polynomial,
and $g_{\ell+1}\in (k[x])[y]$ monic (in $y$) of  degree  larger than that of all the 
polynomials $g_1,\ldots, g_\ell$,
then
  $$\calL' := [\ h_0\, f\ : \ f\in \calL\ ] \ \cat \ [\ g_{\ell+1}\ ]$$
is a  lexGb if and only if $g_{\ell+1} \in \l \calL\r$. 
It is moreover minimal if and only if $\calL$ is minimal.
\end{Theo}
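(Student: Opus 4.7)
The plan is to reduce the characterization to Buchberger's $S$-polynomial criterion, exploiting the transparent staircase of leading monomials that the lex order with $x\prec y$ imposes. Since each $h_i$ is monic in $x$ and each $g_i$ monic in $y$, one has $\LM(h_ig_i)=x^{e_i}y^{d_i}$ where $e_i:=\deg h_i$, $d_1=0<d_2<\cdots<d_\ell$, and the chain $h_{\ell-1}\mid\cdots\mid h_1$ forces $e_{\ell-1}\le\cdots\le e_1$ (with the implicit convention $e_\ell=0$). A leading monomial $x^{e_j}y^{d_j}$ divides another $x^{e_i}y^{d_i}$ (with $j<i$) iff $e_j\le e_i$, so the list is minimal precisely when those inequalities are all strict---that is, when $h_i$ strictly divides $h_{i-1}$---settling minimality in both (A) and (B). Non-adjacent $S$-pairs in (A) moreover reduce to zero automatically by Buchberger's chain-pair criterion, since for any $j<j+1<i$ the monomial $x^{e_{j+1}}y^{d_{j+1}}$ divides $\mathrm{lcm}(\LM(h_jg_j),\LM(h_ig_i))=x^{e_j}y^{d_i}$.

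For (A), the only $S$-polynomials requiring real analysis are therefore those of adjacent pairs. A direct computation gives
\[
S(h_{i-1}g_{i-1},\,h_ig_i) \;=\; x^{e_{i-1}-e_i}\,h_ig_i \;-\; y^{d_i-d_{i-1}}\,h_{i-1}g_{i-1}.
\]
Writing $h_{i-1}/h_i = x^{e_{i-1}-e_i}+r(x)$ with $\deg r<e_{i-1}-e_i$ and performing a single reduction by $h_ig_i$ absorbs the $r(x)h_ig_i$ tail and leaves $h_{i-1}\bigl(g_i-y^{d_i-d_{i-1}}g_{i-1}\bigr)$, whose $y$-degree is strictly less than $d_i$. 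By induction on $\ell$ the sublist $[h_1,\ldots,h_{i-1}g_{i-1}]$ is already a Gr\"obner basis of the ideal it generates, so this remainder reduces to zero iff it lies in $\l h_1,\ldots,h_{i-1}g_{i-1}\r$; dividing the common factor $h_{i-1}$ converts the membership into $g_i-y^{d_i-d_{i-1}}g_{i-1}\in\l h_1/h_{i-1},\ldots,g_{i-1}\r$, and since $y^{d_i-d_{i-1}}g_{i-1}\in\l g_{i-1}\r$ this is exactly \eqref{eq:Laz}.

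For (B), the new $S$-polynomials split into two types. Within $[h_0f:f\in\calL]$ they are simply $h_0$ times the $S$-polynomials of $\calL$, and so inherit their standard representations scaled by $h_0$. The cross pair $S(h_0h_ig_i,\,g_{\ell+1}) = y^{d_{\ell+1}-d_i}h_0h_ig_i - x^{e_i+\deg h_0}g_{\ell+1}$ reduces to zero iff $x^{e_i+\deg h_0}g_{\ell+1}$ can be expressed using the elements $h_0h_jg_j$, which upon unwinding $x^{\deg h_0}=h_0-\tilde r(x)$ recursively along decreasing $x$-degree is equivalent to $g_{\ell+1}\in\l\calL\r$; multiplication by $h_0$ preserves the strict divisibility chain, so minimality transfers unchanged. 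The main obstacle is the bookkeeping in (A): converting the abstract ideal membership obtained after $S$-polynomial reduction into the exact form \eqref{eq:Laz} hinges on $h_{i-1}$ being the monic generator of the ideal of leading-$y$-coefficients at degree $d_{i-1}$, so that division by $h_{i-1}$ is unambiguous---a fact implicit in the monicity and chain hypotheses.
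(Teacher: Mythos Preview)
Your approach to (A) is correct and genuinely different from the paper's. The paper does not prove (A) at all: it simply recognizes the statement as a restatement of Lazard's original structural theorem and cites \cite{Laz85}, giving only a dictionary between the two notations. You instead supply a self-contained argument via Buchberger's criterion, using the staircase of leading monomials and the chain criterion to reduce to adjacent $S$-pairs, then the identity $x^{e_{i-1}-e_i}h_i=h_{i-1}-r(x)h_i$ to extract the factor $h_{i-1}$ and translate reducibility into the ideal membership \eqref{eq:Laz}. This works, though two steps you compress deserve a sentence each: (i) in the ``only if'' direction you do not actually need the induction hypothesis that the sublist is a Gr\"obner basis---the remainder $h_{i-1}(g_i-y^{d_i-d_{i-1}}g_{i-1})$ has $y$-degree $<d_i$, so any reduction to zero modulo $\calL$ can only use the sublist, whence membership in the sub-ideal follows directly; (ii) the cancellation of $h_{i-1}$ from the membership is legitimate simply because $h_{i-1}$ is a nonzerodivisor in $k[x,y]$ and every $h_j$ factors as $h_{i-1}\cdot(h_j/h_{i-1})$.

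Where you diverge unhelpfully is in (B). The paper's argument is a one-line corollary of (A): set $h'_j=h_0h_j$ for $j\le\ell-1$ and $h'_\ell=h_0$; the ratios $h'_j/h'_{i-1}=h_j/h_{i-1}$ are unchanged for $i\le\ell$, so the conditions \eqref{eq:Laz} for $\calL'$ with $3\le i\le\ell$ coincide with those for $\calL$ (already assumed), and the single new condition at $i=\ell+1$ reads $g_{\ell+1}\in\l g_\ell,h_{\ell-1}g_{\ell-1},\ldots,h_1\r=\l\calL\r$. Having just proved (A), you should apply it. Instead you restart the $S$-polynomial analysis from scratch, and the phrase ``unwinding $x^{\deg h_0}=h_0-\tilde r(x)$ recursively along decreasing $x$-degree'' hides real work: you need that $h_0\calL$ is itself a Gr\"obner basis of $h_0\l\calL\r$ (true because $\LM(h_0f)=x^{e_0}\LM(f)$ for monic $h_0$), and that after reducing $S(h_0g_\ell,g_{\ell+1})$ by $\tilde r\,g_{\ell+1}$ the resulting $h_0(y^{d_{\ell+1}-d_\ell}g_\ell-g_{\ell+1})$ lies in $h_0\l\calL\r$ precisely when $g_{\ell+1}\in\l\calL\r$. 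None of this is wrong, but it is substantially longer than the paper's route and your sketch does not make the key intermediate facts explicit.
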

\begin{ex}\label{ex:Laz}
  The system of polynomials $(x^2, \ x y,\ y^2 + x)$ is a minimal
lexGb. Indeed, with the notation of the theorem we have $h_1=x^2,\ h_2 = x$ and $f_2 =y,\ f_3=y^2+x$.
Moreover $h_2$ strictly divides $h_1$.
  The condition  $f_3 \in \l \frac{h_1}{h_2},\ f_2\r$ is the only one that we must verify. It translates to the condition $y^2  + x \in \l x,\ y\r$ which is clearly true.
  
  However, the system $(x^2, \ x y,\ y^2 + 1)$ is not a lexGb. Indeed, $y^2+1 \notin \l x,\ y\r$. \hfill $\Box$
 \end{ex}
\begin{proof}
  Part~(A) is a restatement\footnote{
    with the notations $k,\ f_i,\ H_i, \ G_i$s of Lazard's article:
    $k=\ell-1$,
$f_0 = h_1,\ f_1 = h_2 g_2, \ \ldots \ , \ f_{k-1} = h_{\ell-1} g_{\ell-1} , \ f_k = g_\ell$, and 
$H_i = g_{i+1}$ and $G_i = \frac{h_{i+1}}{h_{i+2}}$.}
  of Lazard's theorem~\cite[Thm 1]{Laz85} in our particular context.

  Part~(B) is straightforward when we translate the conditions of Eq.~\eqref{eq:Laz} made on the list $\calL$
  to the list $\calL' =  [\ h_0\, f\ : \ f\in \calL\ ] \ \cat \ [\ g_{\ell+1}\ ] $.
  Indeed, all these conditions, except the one mentioned that is ($g_{\ell+1} \in \l \calL\r$), are verified since $\calL$ is assumed to be a  lexGb. If $\calL$ is minimal, then the condition for minimality is guaranteed since $h_0$ is assumed to
be non-constant.
  \end{proof}

\subsection{Making polynomials monic}
\label{sec:Weier}
In order to cope with subresultants which do not have an invertible leading  coefficient modulo $T$,
a routine that transforms them into equivalent monic polynomials is presented here.
If the polynomial is not nilpotent, Weierstrass factorization allows to realize this.
Else, we need first to ``remove'' the nilpotent part.
When $T=p^e$,
it suffices to factor out the largest power of $p$ that divides it.
When $T$ is general however, a polynomial 
which is not nilpotent may not necessarily have a coefficient that is invertible.
We need then dynamic evaluation as introduced in Section~\ref{sec:D5}.
The main algorithm~\ref{algo:MF} ``$\MF$''  is a wrapping  of two subroutines, Algorithm~\ref{algo:WF} ``$\WF$''
and Algorithm~\ref{algo:MQ} ``$\MQ$''. The first one puts the polynomial that we want to ``make'' monic
into a form of application of the second algorithm. This second one realizes Weierstrass factorization
through Hensel lifting. 
In the subsequent algorithms, only the ``$\MF$'' wrapping algorithm will appear
(and no more  ``$\WF$'' nor ``$\MQ$'').
  
\paragraph{Overview of Algorithm~\ref{algo:WF} ``$\WF$''} We assume in this section and the next one
that $T=p^e$ is the power of an irreducible polynomial.
The algorithm ``$\WF$'' has two effects: find the largest power of $p$ that divides the input polynomial (case of return at Line~\ref{WF:ret2}),
or, if there is none, find the coefficient of highest degree that is not divided by $p$ (case of a return at Line~\ref{WF:ret1}); This coefficient is then invertible modulo $T$.
The algorithm scans the coefficients by decreasing degree and updates the computation with a gcd (Line~\ref{WF:gcd}).
If the coefficient $c_i(x)$ (of $y^i$) is invertible modulo $T$, the algorithm also outputs
the largest power of $p$ that divides all coefficients of degree higher than $i$, 
as well as the inverse of $c_i$ modulo $T$. These output are indeed required to perform Weierstrass factorization through Hensel
lifting (Algorithm~\ref{algo:MQ} ``$\MQ$'').

It may thus be necessary to call 
Algorithm~\ref{algo:WF} twice: one to remove the nilpotent part, and one
to find the largest coefficient that has become invertible.
Then, Weierstrass factorization Algorithm~\ref{algo:MQ}  can be called.
Let us see this through an example:
\begin{ex}[Algorithm~\ref{algo:WF} ``$\WF$'']\label{ex:nilFac}
Let $f = 3 x^2  y^2 + (x^2  + 2 x) y + x$ and $p(x)=x$, $T=p^3=x^3$. 
This polynomial is nilpotent, and to ``remove'' the nilpotent
part, running Algorithm~\ref{algo:WF} ``$\WF$'' gives (see the specifications):
\begin{equation}\label{eq:NF-ex}
 (\, x,\ \ - 1,\ \ \_\_ \,) \ \leftarrow \ \WF(f, \ T).
\end{equation}
Indeed, $x\mid f$ but $x^2\nmid f$.
The second output is $-1$ meaning that there is no coefficient of $f$ that is invertible modulo $x$  (since
$f$ is divided by $x$). The third output has no importance in this case, and according to the conventions
is replaced by an underscore ``{\_\_}''.

\noindent Then $f/x$ is not nilpotent, so that running $\WF$ again  will output:
\begin{equation}\label{eq:WM-ex}
 (\, x,\  \  1, \ \ -\frac 1 4 x + \frac 1 2\,) \leftarrow \WF(f/x,\ x^{3-1}).
\end{equation}
The second output $1$ indicates the degree one in $f/x$. Indeed, the coefficient of $y$ in $f/x$
is $\frac{x^2+2x}x = x+2$, which is indeed invertible modulo $x^3/x=x^2$. The inverse modulo $x^2$ of $x+2$ is then 
 $-\frac 1 4 x + \frac 1 2$ which is the third output.

Note that the coefficient of $y^2$ in $f/x$ is not invertible hence is divided by $x$ (this coefficient being $3x$).
The first output returns the highest power of $x$ that divides this coefficient:   $x$ divides $\frac{3 x^2}x$.
\hfill \qed\end{ex}

\begin{algorithm}[H]
\DontPrintSemicolon
\KwIn{1. polynomial $f=c_0(x) + c_1(x) y + \dots+ c_\delta(x) y^\delta \in k[x,y]$,
  
  2. $T=p^e \in k[x]$ power of an irreducible polynomial $p$.
}
\KwOut{1. $g=p^\nu$ the largest power of $p$ that divides all the coefficients
$c_{d+1},\ldots, c_\delta$ (where $d$ is defined in 2. below).

  2.   the largest index $d$ such that the coefficient $c_d(x)$ is invertible modulo $T$.
  If there is none, then $d=-1$.
  
  3. $\alpha$ is the inverse of $c_d$ if $d \ge 0$,  and an arbitrary value if $d=-1$.
}

\BlankLine

$d\leftarrow \deg_y(f)$~~;\ \ \   $g_{new}\leftarrow T$\;
\While{$d \ge 0$ and $g_{new}\not=1$\nllabel{WF:while1}}{
  $g \leftarrow g_{new}$~;~~ $c_d\leftarrow \coeff(f,\ d)$\;

  $(\, g_{new},\ \_\_\, ,\ \alpha \, ) \leftarrow \xgcd(g,\ c_d)$
  \nllabel{WF:gcd}\tcp*[r]{$\xgcd \ =$ Extended gcd: $\_\_ g + \alpha c_d = g_{new}$,}

  $d \leftarrow d-1$ \tcp*[r]{the underscore \_\_ replaces an unimportant output}
}
\eIf(\tcp*[f]{case $f$ not nilpotent}){$g_{new} = 1$}{
 
  \Return{$(\, g,\ d+1,\ \alpha\, )$}\nllabel{WF:ret1}\;
}(\tcp*[f]{case $f$ nilpotent})
{
  \Return{$(\, g_{new},\ -1,\ 0\, )$}\nllabel{WF:ret2}\;
}
\caption{\label{algo:WF} \quad $(\, g,\ d,\ \alpha \,) \leftarrow  \WF(f,\ T)$}
\end{algorithm}

\begin{Rmk}\label{rmk:nilFac}
  $f$ is nilpotent if and only if $d=-1$ where $ (\, \_\_,\ d,\ \_\_ \,) \leftarrow \WF(f,\ T)$
  (again, underscores ``$\_\_$'' replace unimportant output).
\end{Rmk}

\begin{Lem}[Correctness of Algorithm~\ref{algo:WF} ``$\WF$'']\label{lem:WF}
The three output $(\, g_{new},\ d,\ \alpha \, )$ satisfy the specifications 1-2-3.
  \end{Lem}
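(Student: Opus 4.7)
The proof would rest on a loop invariant together with a case split on how the while loop (Line~\ref{WF:while1}) exits. The invariant I would establish is that at the top of each iteration, with the current values of $d$ and $g_{new}$,
\[
g_{new} \;=\; \gcd\bigl(T,\, c_{d+1},\, c_{d+2},\, \ldots,\, c_\delta\bigr),
\]
interpreted as $T$ when $d=\delta$ (empty list). Since $T=p^e$ and every update replaces $g_{new}$ by its gcd with one further coefficient, each intermediate $g_{new}$ is a power of $p$; the invariant is vacuous at entry and is preserved by one pass through the body, which extends the gcd list by the next $c_d$ and decrements $d$. Termination is trivial because $d$ strictly decreases.

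Two exit scenarios have to be handled. In Case (a) the loop exits with $g_{new}=1$ (Line~\ref{WF:ret1}); let $d_0$ denote the value of $d$ used in the last iteration, so the output is $(g,d_0,\alpha)$. By the invariant $g=\gcd(T,c_{d_0+1},\ldots,c_\delta)$ is a \emph{nontrivial} power of $p$ (it equalled $g_{new}\neq 1$ at the top of that last iteration), and the final $\xgcd$ call gives $\gcd(g,c_{d_0})=1$, which forces $p\nmid c_{d_0}$ and hence $c_{d_0}$ invertible modulo $T$. For every $i>d_0$ the loop did \emph{not} exit while processing $c_i$, so the corresponding $g_{new}$ remained a nontrivial power of $p$, forcing $p\mid c_i$; this shows that $d_0$ is the \emph{largest} invertible index and that $g$ is exactly the largest power of $p$ dividing $c_{d_0+1},\ldots,c_\delta$ in $R=k[x]/\langle T\rangle$, giving specs~1 and~2. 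In Case (b) the loop exits with $d=-1$ (Line~\ref{WF:ret2}) and the invariant gives $g_{new}=\gcd(T,c_0,\ldots,c_\delta)\neq 1$, so $p$ divides every coefficient; by the Proposition on nilpotent polynomials, $f$ is nilpotent modulo $T$, no coefficient is invertible, and specs~1 and~2 hold with $d=-1$.

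The main obstacle I anticipate lies in spec~3. The raw Bezout identity $u g + \alpha c_{d_0}=1$ produced by $\xgcd$ only yields $\alpha c_{d_0}\equiv 1\pmod g$, whereas Example~\ref{ex:nilFac} exhibits $\alpha=-\tfrac14 x+\tfrac12$ as the inverse of $x+2$ modulo $x^2$, not merely modulo $x$. I would therefore either read $\xgcd$ as a variant that returns a Bezout coefficient already lifted to an inverse modulo $T$ when the gcd is $1$, or interpret spec~3 as providing an inverse modulo $g$ to be lifted to modulo $T$ by the downstream Hensel/Weierstrass routine $\MQ$. With that reading fixed, spec~3 follows immediately from the $\xgcd$ postcondition, and the whole correctness argument reduces to the invariant verification above.
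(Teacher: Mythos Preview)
Your approach is essentially the same as the paper's: a case split on the two exit points of the while loop, driven by the invariant $g_{new}=\gcd(T,c_{d+1},\ldots,c_\delta)$. The paper's proof is terser (it does not name the invariant explicitly) but the logic for specifications~1 and~2 is identical.

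Your caution on specification~3 is well placed and in fact sharper than the paper. The paper's proof simply asserts that ``the extended gcd $\xgcd$ computation at Line~\ref{WF:gcd} gives $\alpha c_{d+1}\equiv 1\bmod T$,'' but as you observe, the Bezout identity $u\,g+\alpha\,c_d=1$ returned by $\xgcd(g,c_d)$ only yields $\alpha c_d\equiv 1\bmod g$, and $g$ is in general a strict divisor of $T$ once the leading coefficient is nilpotent. Example~\ref{ex:nilFac} displays the inverse modulo $T'=x^2$, which a plain $\xgcd(x,x+2)$ would not produce. Your second reading is the one that makes the whole pipeline consistent: the only downstream consumer is Algorithm~\ref{algo:MQ}, where $\alpha$ is used at Line~1 as $b\leftarrow \alpha f\bmod N$ and then as the Bezout datum for the Hensel step starting from modulus $N$; both uses require only $\alpha c_d\equiv 1\bmod N$, and $N$ is precisely the $g$ passed from $\WF$. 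So the algorithm is correct under that weaker reading of spec~3, even though the stated spec and the example suggest the stronger claim modulo $T$.
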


\begin{proof}
 We must prove that the output at Line~\ref{WF:ret1} or~\ref{WF:ret2}
 verifies the specifications 1.-2.-3. of the output.

 {\em Case of return at Line~\ref{WF:ret1}.} Here $g_{new}=1$
 so that the while loop (Lines~\ref{WF:while1}-\ref{WF:gcd})
 exited on that condition and not on the condition $d <0$. Therefore $d \ge 0$.
 This means that
 $$
 g_{new}=\gcd(T,\ c_{d+1},\ c_{d+2}, \ldots, \ c_\delta) = 1,
 \quad \text{and} \quad g=\gcd(T,\ c_{d+2}, \ldots, \ c_\delta)\not=1.
 $$
 Moreover the extended gcd ``\xgcd'' computation at Line~\ref{WF:gcd}   
 gives $\alpha c_{d+1} \equiv 1 \bmod T$. We have proved that the output $(\, g,\ d+1,\ \alpha \, )$
 verifies the specifications 1.-2.-3.
 
 {\em Case of return at Line~\ref{WF:ret2}.}
 Here $g_{new}\not=1$, meaning that the while loop (Lines~\ref{WF:while1}-\ref{WF:gcd})
 exited on the condition $d =-1$. Thus $g_{new}=\gcd(T,\ c_{0},\ c_{1}, \ldots, \ c_\delta)\not=1$
 is the largest power of $p$ that divides all the coefficients of $f$, hence that divides $f$.
 This proves that the output $(\, g_{new},\ -1,\ 0\, )$ verifies the specifications 1.-2.-3.
  \end{proof}
\paragraph{Weierstrass factorization through Hensel lifting}
The Weierstrass preparation theorem  states that
a formal power series $f = \sum_i c_i y^i \in \fa [[y]]$ with coefficients $(c_i)_i$ in a
local complete ring $(\fa,\fm)$, not all of them lying in $\fm$,
factorizes uniquely as $f = q u$ where $q = q_0 + \cdots + q_{n-1} y^{n-1} + y^n$ is
monic and $q_i \in \fm$, and where $u \in \fa[[y]]^\star$ is an invertible power series.
In the traditional version, the degree $n$ of $q$ is equal to the {\em smallest} degree coefficient
of the series $f$ not in $\fm$.
If $f$ is a {\em polynomial}, it is easy to adapt proofs so that $n=\deg(q)$ is the degree
of the {\em highest} degree coefficient $c_n$ of $f$ {\em not} in $\fm$.
Indeed, it suffices to consider the reverse polynomials to switch back to the the smallest degree coefficient.
In our context the local complete ring is $ \fa = k[x]/\l p^e\r$, $\fm=\l p\r$
(indeed it is isomorphic to $k[[x]]/\l p^e\r$, which is a finite quotient of the local  complete ring
$k[[x]]$). 

\begin{Lem}[Weierstrass]\label{lem:WM}
Given  $f\in R[y]$, written $f = c_d(x) y^d + \cdots +c_0(x)$, with $c_n$ the coefficient
of highest degree not in $\l p\r$ (that such a coefficient exists is a pre-requirement),
there exist two polynomials $q$ and $u$ defined below 
such that:

\quad (1) $f= q \cdot u$.

\quad (2) $q$ is monic of degree $n$, and

\quad (3)  $u = u_0 + u_1 y + \cdots + u _{d-n}  y^{d-n}$ with $u_0\not \in \l p \r$ and $u_i \in \l p\r$ for $i\ge 1$. In particular
$u$ is a unit of $R[[y]]$, so that: \underline{$\l f,\ p^e\r = \l q, \ p^e \r$}.
\end{Lem}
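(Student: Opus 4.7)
The plan is to construct $q$ and $u$ by Hensel lifting from the residue field $R/\fm = k[x]/\langle p\rangle$, exploiting that $R$ is a local ring whose maximal ideal $\fm = \langle p\rangle$ satisfies $\fm^e = 0$. First, I would reduce $f$ modulo $\fm$: since $c_i \in \fm$ for all $i > n$ by assumption, the reduction $\bar f \in (R/\fm)[y]$ has degree exactly $n$ with unit leading coefficient $\bar c_n$. This yields the coprime factorization $\bar f = \bar q_0 \cdot \bar u_0$ in $(R/\fm)[y]$, where $\bar q_0 = \bar c_n^{-1}\bar f$ is monic of degree $n$ and $\bar u_0 = \bar c_n$ is a unit constant. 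Coprimality here is automatic, since $\bar u_0$ is already a unit in the polynomial ring, which is precisely what makes the Hensel step unconditional.

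Next, I would lift the factorization step by step from modulo $p^\ell$ to modulo $p^{\ell+1}$, working in $k[x][y]$. Assume inductively that $q_\ell, u_\ell \in k[x][y]$ have been constructed with $q_\ell$ monic of degree $n$, $u_\ell$ of degree at most $d - n$ and reducing to the constant $\bar c_n$ modulo $p$, and $f \equiv q_\ell u_\ell \pmod{p^\ell}$. Setting $\varepsilon_\ell := (f - q_\ell u_\ell)/p^\ell \in k[x][y]$ (of degree at most $d$ in $y$), and writing $q_{\ell+1} = q_\ell + p^\ell \Delta q$, $u_{\ell+1} = u_\ell + p^\ell \Delta u$, the congruence $f \equiv q_{\ell+1} u_{\ell+1} \pmod{p^{\ell+1}}$ reduces (for $\ell \ge 1$) to
\[
q_\ell \, \Delta u + u_\ell \, \Delta q \equiv \varepsilon_\ell \pmod{p},
\]
which must be solved with $\deg_y \Delta q < n$ so that $q_{\ell+1}$ stays monic of degree $n$. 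Since $u_\ell$ reduces modulo $p$ to the constant $\bar c_n$, this amounts to an ordinary Euclidean division of $\bar c_n^{-1} \bar\varepsilon_\ell$ by the monic polynomial $\bar q_\ell$ in $(R/\fm)[y]$: the remainder (of $y$-degree $< n$) lifts to $\Delta q$, and $\bar c_n$ times the quotient (of $y$-degree $\le d - n$) lifts to $\Delta u$. After $e$ steps, reducing modulo $p^e$ yields $q, u \in R[y]$ with $qu = f$ exactly, $q$ monic of degree $n$, and $\deg_y u \le d - n$.

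Finally, claim (3) is then immediate: $u_0$ reduces to the unit $\bar c_n$ modulo $\fm$, hence is itself a unit in $R$, while $u_1,\ldots, u_{d-n}$ reduce to zero in $R/\fm$ and so lie in $\fm$. To conclude $\langle f, p^e\rangle = \langle q, p^e\rangle$, I observe that $u$ is actually a unit in $R[y]$: writing $u = u_0(1 + v)$ with $v \in \fm R[y]$, every coefficient of $v^e$ lies in $\fm^e = 0$, so the geometric series $\sum_{i=0}^{e-1}(-v)^i$ terminates and produces a polynomial inverse of $1 + v$. Therefore $\langle qu\rangle = \langle q\rangle$ in $R[y]$, which lifts to the claimed ideal equality in $k[x,y]$ modulo $p^e$. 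The main technical point throughout is the simultaneous control of the $y$-degrees of $q_\ell$ and $u_\ell$ at each Hensel step; this is what forces solving the lifting equation via Euclidean division by the monic $\bar q_\ell$ rather than a more general Bezout relation.
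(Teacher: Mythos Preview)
Your argument is correct and follows the same Hensel-lifting route that the paper itself points to (it defers the proof to Musser~\cite{musser1975} and implements it as Algorithm~\ref{algo:MQ} via quadratic Hensel lifting of the factorization $f\equiv c_n\cdot b\bmod N$). Your linear lifting with Euclidean division is the same idea in a different gear ratio, and your final observation that $u$ is in fact a unit in $R[y]$ (not merely in $R[[y]]$) is a pleasant strengthening of what the lemma states.
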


 To compute the monic polynomial $q$ in practice, ``special'' Euclidean division~\cite{lang2002algebra},
linear algebra,
and Hensel lifting~\cite[Algo~Q]{musser1975} are available.
The latter is the most efficient and works in more general situations that
are required in the second part of this article.
This is what is used in~\cite[Thm.~1]{MoSc16} for their ``normalization'' (whose
purpose is to make polynomials monic too).
Since a proof (and more) can be found  in~\cite{musser1975}, it is not reproduced here.
That  proof  relies entirely on Hensel lifting and does
not involve Weierstrass preparation theorem, however Algorithm~(Q) given by  Musser is really 
a Weierstrass factorization when the modulus is the power of an irreducible polynomial.
This implementation has the advantage to extend straightforwardly 
when the ring of coefficients is not necessarily a local ring, but a direct product of thereof --- and that no division by zero occur.
This feature helps to  generalize the dynamic valuation as undertaken in Section~\ref{sec:D5},
in that we can rely again on the same algorithm~\ref{algo:MQ} when the modulus is not a power of an irreducible polynomial.

This algorithm essentially reduces to quadratic Hensel lifting (QHL). It lifts a factorization, and,
as always with QHL, a B\'ezout identity. A standard form of QHL is described in Algorithm~15.10 of~\cite{GaGe03}
and we refer to it for details. Given $N\in k[x]$ and $f,a,b,\alpha,\beta\in k[x,y]$,
the notation:
$$(\, a^\star,\ b^\star\, ) \leftarrow  \HL(f,\ a,\ b,\ \alpha,\  \beta,\  N \leadsto N^{2^\epsilon}), $$
assumes that:
$$
f\equiv a b \bmod N, \quad  \alpha a + \beta b \equiv 1 \bmod N,
\quad f\equiv a^\star b^\star \bmod N^{2^\epsilon},
\quad a^\star \equiv a \bmod N,
\quad b^\star \equiv b \bmod N.
$$

Here, the initial two factors of the input polynomial
$f$ are the coefficient $c_d \equiv \alpha^{-1} \bmod T$ of  $f$,
and $b \equiv \alpha f \bmod N$
(so that $b $ is a monic polynomial of degree $d$ modulo $N$). The lifting produces
the same equality modulo $N^{2^\epsilon}$. This integer $\epsilon$ is the smallest
integer for which $N^{2^\epsilon} $ divides $T$. That such  an integer exists follows from $\sqfp(N)=\sqfp(T)$.
The initial B\'ezout identity is given for free here: indeed $c_d \alpha  + b \cdot 0 = 1$.

\begin{algorithm}[H]
\DontPrintSemicolon
\KwIn{1. polynomial $f = c_0(x) + c_1(x) y + \dots+ c_\delta(x) y^\delta
  \in k[x,y]$, with $\deg_y(f)=\delta \ge d$.
  
  Its coefficient $c_d(x)$ of degree $d$  is invertible mod $T$ of inverse $\alpha$.\\
  Moreover, $c_{d+1},\ \ldots,\ c_\delta$ are nilpotent modulo $T$, that is $\sqfp(T)|\sqfp(c_i)$ for $i=d+1,\ \ldots, \delta$
  
  2. A monic non-constant polynomial $T\in k[x]$.
  
  3. $d$ a non-negative integer (defined in 1.)
  
  4. $\alpha\equiv c_d(x)^{-1}  \bmod T$
  
  5. a monic polynomial $N\in k[x]$ that divides $T$ and such that $\sqfp(N)=\sqfp(T)$,
  equal to $\gcd(T, \ c_{d+1},\ \ldots, \ c_\delta)$.
  So that if $N\not=1$ then $\deg_y(f \bmod N) = d$.
}
\KwOut{$b^\star\in k[x,y]$, monic (in $y$) of degree $d$,
  and verifying \underline{$\l b^\star,\ T\r=\l f,\ T\r$}.}

\BlankLine

$b \leftarrow  \alpha f \bmod N$ \tcp*[r]{$b$ is monic of degree $d$}

$c_d\leftarrow \coeff(f,\ d)$\;

Let $\epsilon$ be the smallest integer such that $T|N^{2^\epsilon}$\;

$(\, \_\_,\ b^\star\, ) \leftarrow \HL(f,\ c_d,\ b,\ \alpha, \ 0,\  N \leadsto N^{2^\epsilon}) $
\tcp*[r]{$f\equiv c_d b \bmod N,\quad  \alpha c_d + 0 \cdot b =1$}
\Return{$b^\star \bmod T$}\;
\caption{\label{algo:MQ} \quad $b \leftarrow \MQ(f,\ T,\ d,\ \alpha, \ N)$}
\end{algorithm}

\begin{ex}[Algorithm~\ref{algo:MQ} ``$\MQ$'']
  With input $f =  x(x+1) y^2 + (2 x + 1) y + x^2$ modulo $T = x^2 (x+1)^2$,
$d=1$   and $\alpha \equiv (2x+1)^{-1} \equiv -8 x^3 -12 x^2 - 2 x +1 \bmod T$,
and $N=x(x+1)$.
All input' specifications are satisfied: the coefficient $x(x+1)$ of degree 2 is nilpotent modulo $T$,
the coefficient $2 x+1$ of degree 1 is invertible. As expected, we have $d=1$ and $N=\gcd(T,\ x(x+1))=x(x+1)$;
As well as $\alpha\equiv (2 x+1)^{-1} \bmod T$.

We have $b \equiv f \alpha \equiv y+x \bmod N$. Now we lift
$f\equiv (2x+1)b \bmod N,\ N^2,\ N^4, \ldots$ until $T|N^{2^\ell}$ .
The B\'ezout identity $\alpha (2 x+1) + 0\cdot b =1$ that is also lifted is given for free.
In this example one step of lifting is sufficient since $N^2=T$, yielding $b^\star  =  y-x^3-2 x^2$. Then $\l b^\star,\ T\r = \l f,\ T\r$.
\hfill \qed \end{ex}

As already explained, Algorithm~\ref{algo:MF} ``$\MF$''  encapsulates the two algorithms ``$\WF$'' and ``$\MQ$''
described above in one algorithm.
On input $f,\ T$,  it outputs $U$ the largest polynomial that divides $f$, and $b$ {\em monic} which is ``equivalent''
to $f/U$ in the sense that $\l b,\ T/U\r = \l f/U,\ T/U\r$.

\begin{algorithm}[!h]
\DontPrintSemicolon
\KwIn{1. polynomial $f\in k[x,y]$
  
  2. $T=p^e\in k[x]$ the power of an irreducible polynomial $p$.
}
\KwOut{
  1. $b \in k[x,y]$  is monic (in $y$)
  
  2. Monic polynomial $U\in k[x]$ that divides $T$ and $f$, such that:
\hfill  \underline{$ \l b ,\ T/U\r = \l f/U,\ T/U \r$}.
}

\BlankLine

$(\, U,\ d,\ \alpha \,) \leftarrow \WF(f,\ T)$\nllabel{MF:WF1}\;

\eIf(\tcp*[f]{Case where $f$ is nilpotent}){$d=-1$}{ 
     $f' \leftarrow f/U$~; \ \    $ T' \leftarrow T/U$  \tcp*[r]{Now $f'$ is not nilpotent\ldots}
   
   $ (\, N, \ d,\ \alpha\, )  \leftarrow \WF(f',\ T')$\nllabel{MF:WF2}\tcp*[r]{\ldots so that $d\ge 0$}

  $b \leftarrow \MQ(f',\ T', \ d,\ \alpha, \ N)$\nllabel{MF:MQ1}\tcp*[r]{ $\l b,\ T'\r =\l f',\ T'\r$}
  
   \Return{$ (\, b,\ U\, )$}\nllabel{MF:ret1}\;

}(\tcp*[f]{Here $f$ was not nilpotent}){
  
$b \leftarrow \MQ(f,\ T, \ d,\ \alpha, \ U)$\nllabel{MF:MQ2}\tcp*[r]{$\l b,\ T\r =\l f,\ T\r$}

\Return{$(\, b,\ 1\, )$}\nllabel{MF:ret2}\;

}

\caption{\label{algo:MF} \quad $(\, b,\  U\, ) \leftarrow  \MF(f,\ T)$}
\end{algorithm}

\begin{Lem}[Correctness of Algorithm~\ref{algo:MF}]\label{lem:MF}
The output $(\, b,\ U\, )$ satisfies the equality of ideals $\l b,\ T/U\r = \l f/U,\ T/U\r$ with $b\in k[x,y]$
monic (in $y$) and $U\in k[x]$.
\end{Lem}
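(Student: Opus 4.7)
The plan is to perform a case analysis on the value of $d$ returned by the first call $(U, d, \alpha) \leftarrow \WF(f, T)$ at Line~\ref{MF:WF1}, since by Remark~\ref{rmk:nilFac} this flag indicates whether $f$ is nilpotent modulo $T = p^e$.

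First I would handle the easier case $d \geq 0$, corresponding to the return at Line~\ref{MF:ret2}. Here $f$ is not nilpotent, so by Lemma~\ref{lem:WF} the output $U$ is the largest power of $p$ dividing the coefficients $c_{d+1}, \ldots, c_\delta$, the coefficient $c_d$ is invertible modulo $T$ with inverse $\alpha$, and the higher coefficients are divisible by $\sqfp(T) = p$ (hence nilpotent). These are precisely the input specifications of $\MQ$, so $\MQ(f, T, d, \alpha, U)$ at Line~\ref{MF:MQ2} returns $b$ monic in $y$ with $\l b, T\r = \l f, T\r$. Since the returned second component is $1$, which trivially divides both $T$ and $f$, and $\l b, T/1\r = \l f/1, T/1\r$, the specifications of $\MF$ hold.

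Next I would address the case $d = -1$, where $f$ is nilpotent. By Lemma~\ref{lem:WF}, $U$ is the largest power of $p$ dividing all coefficients of $f$, so $U$ divides $f$ as a polynomial in $k[x,y]$, and $U \mid T$ since both are powers of $p$. Setting $f' = f/U$ and $T' = T/U$, the key subsidiary claim is that $f'$ is not nilpotent modulo $T'$: otherwise $p = \sqfp(T')$ would divide every coefficient of $f'$, and then $p\cdot U$ would divide every coefficient of $f$, contradicting the maximality of $U$. Consequently the second call $\WF(f', T')$ at Line~\ref{MF:WF2} returns some $d \geq 0$ together with $N$ and $\alpha$ satisfying the preconditions of $\MQ$ with respect to $T'$, so Line~\ref{MF:MQ1} yields $b$ monic in $y$ with $\l b, T'\r = \l f', T'\r$, i.e.\ $\l b, T/U\r = \l f/U, T/U\r$. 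Returning $(b, U)$ then satisfies the specifications of $\MF$.

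The main subtlety lies in the nilpotent case: one must justify that $f' = f/U$ is a genuine polynomial (not merely an equivalence class modulo $T$) and that it is not nilpotent modulo $T' = T/U$. Both facts rest on the precise statement of Lemma~\ref{lem:WF}, namely that $U$ is the \emph{largest} power of $p$ dividing every coefficient of $f$; the rest of the proof is a direct invocation of the specifications of $\WF$ (Lemma~\ref{lem:WF}) and of $\MQ$ (via Lemma~\ref{lem:WM} and the Hensel-lifting description preceding Algorithm~\ref{algo:MQ}).
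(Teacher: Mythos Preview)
Your proposal is correct and follows essentially the same approach as the paper: a case split on whether $d\ge 0$ or $d=-1$ after the first call to $\WF$, followed in each branch by checking that the input specifications of $\MQ$ are met and invoking its output specification. Your treatment is in fact slightly more careful than the paper's, since you explicitly justify (via the maximality of the power $U$ of $p$) why $f'=f/U$ cannot be nilpotent modulo $T'=T/U$, whereas the paper simply asserts that $f'$ has an invertible coefficient modulo $T'$.
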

\begin{proof}
The call to $\WF(f,\ T)$ at Line~\ref{MF:WF1} outputs
$(\, U,\ d, \alpha\, )$. We must distinguish two cases:

{\em Case of exit at Line~\ref{MF:ret2}.}  Here $d\ge 0$. According to the output' specifications of ``$\WF$'',
$U=\gcd(T, \ c_{d+1},\ldots,\ c_\delta)$ divides $T$ and the coefficients $c_{d+1},\ldots, c_\delta$ of $f$. Additionally,
$c_d$ is invertible modulo $T$ of inverse $\alpha$.
Therefore the five entries $(\, f,\ T, \ d,\ \alpha,\ U\, )$ of $\MQ$ at Line~\ref{MF:MQ2}
satisfy the input' specifications of $\MQ$. 
Its output' specifications provide: $\l b,\ T\r = \l f,\ T\r$.
In conclusion, the output $(\, b,\ 1\, )$ of $\MF(f,\ T)$ satisfy the required specifications
with $U=1$.

{\em Case of exit at Line~\ref{MF:ret1}.}
Here $d=-1$, which means that, according to the output' specifications of ``$\WF$'',
$U=\gcd(T,\ c_0,\ \ldots,\ c_\delta)$ divides all coefficients of $f$, as well as $T$.
It follows also that $f'= f/U$ has an invertible coefficient modulo $T'=T/U$.
Therefore the  second call to  ``$\WF$'' at Line~\ref{MF:WF2} implies that the output
$(\, N,\ d,\ \alpha\, )$  satisfy: $c_d/U$ is invertible modulo $T'=T/U$ with $\alpha c_d /U \equiv 1 \bmod T'$.
Additionally, $N=\gcd(T',\ c_{d+1}/U,\ldots, c_\delta/U)$
divides all the coefficients $c_{d+1}/U,\ldots, c_\delta/U$ of $f'$.
Consequently, the five entries $(\, f',\ T',\ d,\ \alpha,\ N\, )$ satisfy the input' specifications
of $\MQ$ at Line~\ref{MF:MQ1}. The output $b$ hence verifies $\l b,\ T'\r = \l f',\ T'\r$,
which is what we wanted to prove.
\end{proof}
\subsection{Subresultant p.r.s}
As explained in the introductory examples~\ref{ex:trois}, the last non nilpotent
and the first nilpotent (which may be zero according to our convention) polynomials in the modified subresultant p.r.s.  need to be retrieved.
The p.r.s is computed modulo $p^e$ which requires special care
since very few results exist when coefficients are in  a non-reduced ring.
In particular the classic   formula~\eqref{eq:prs}
may fail because  of inverting a leading coefficient which may not be invertible.
If such a subresultant is met, either all its coefficients are nilpotent and we have found the first nilpotent,
either it has one invertible coefficient and it can be made monic (previous section);
Then a subresultant computation is restarted
(Line~\ref{LNL:ret1} of Algorithm~\ref{algo:LNL}).
Before giving details of  Algorithm~\ref{algo:LNL} in the next section,
the remaining of this section of preliminaries recalls the most fundamental results
of the theory of subresultants.

\paragraph{Review}
The subresultant p.r.s is a central topic  in computer algebra
and as such has been studied extensively; We will only recall the key results. It 
enjoys many convenient  properties both algorithmically
and theoretically.

On one hand a subresultant of two polynomials is the determinant 
of a certain matrix derived from the Sylvester matrix.
It can be defined over any ring. But computing them in this way is costly.
On the other hand, 
there is the subresultant p.r.s. computed through the  formula~\eqref{eq:prs}.
The main and classic result is that both objects are related through the block structure (a.k.a gap structure) theorem.
Computing subresultants through a p.r.s is   cheaper.
The latter assumes traditionally that the input  polynomials  $a$ and $b$
are in a unique factorization domain.

It is possible to address polynomials having coefficients in rings of type $k[x]/\l p^e\r$
thanks to the {\em specialization} property. This is addressed in Section~\ref{sec:prs2},
here only standard definitions and results are recalled.
\begin{Def}\label{def:sub}
Write $n_j$ the degrees in  $y$ of any Euclidean p.r.s of $a$ and $b$
in $(k[x])[y]$, with $n_1:=\deg_y(a)$, $n_2:=\deg_y(b)$.
\begin{itemize}
\item For $0\le j\le n_2-1$, the $j$-th subresultant of $a$ and $b$,
written $\Sr_j(a,b)$, is 
defined as the  polynomial of degree at most $j$ whose coefficients
are certain minors of the  Sylvester matrix (see {\em e.g.} \cite[Prop.~7.7.1]{mishra1993}).

\item For $j=n_2$, $\ \ \Sr_{n_2}(a,b):=\lc(b)^{n_1-n_2-1} b $.

\item For $n_2<  j < n_1-1$,  $\ \ \Sr_j(a,b):=0$.

\item  Let $\nu_1:=\deg_y(a \bmod T)$ and $\nu_2:=\deg_y(b \bmod T)$ and assume $\nu_1 \ge \nu_2$.
For $0\le j<\nu_1-1$ we define the $j$-th subresultant of $(a \bmod T)$ and $(b \mod T)$, 
written $\S_j((a \bmod T),\ (b \bmod T))$, whose coefficients are certain minors of the  Sylvester matrix of $(a \bmod T)$ and $(b \bmod T)$.
\end{itemize}
\end{Def}
Both subresultant families $(\Sr_j (a,b))_{0\le j\le n_2-1}$ and $(\S_j((a \bmod T),\ (b \bmod T)))_{0\le j\le \nu_2-1}$
are related by the following functorial property:
\begin{Lem}[Corollary~7.8.2 of~\cite{mishra1993}]\label{lem:hom}
Write $R=k[x]/\l T\r$, and $\phi : (k[x])[y]\to R[y]$ the reduction map.
As above let $n_1:=\deg_y(a)$ and $n_2:=\deg_y(b)$ with $n_1\ge n_2$,
and $\nu_1:=\deg_y(a \bmod T)$ and $\nu_2:=\deg_y(b \bmod T)$. Assume $\nu_1 \ge \nu_2$. Then
for $0\le j < n_2$,
\begin{enumerate}
\item If $\nu_2=n_2$  and $\nu_1\le n_1$, then $\phi(\Sr_j(a,b)) = \phi(\lc(b))^{n_1-\nu_1} \S_j(\phi(a), \phi(b))$.
\item If $\nu_2 \le n_2$ and $\nu_1 = n_1$, then $\phi(\Sr_j(a,b)) = \phi(\lc(a))^{n_2-\nu_2} \S_j(\phi(a), \phi(b))$.
\item If $\nu_2 < n_2$ and $\nu_1 < n_1$, then $\phi(\Sr_j(a,b))=0$.
\end{enumerate}
\end{Lem}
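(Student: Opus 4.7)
The plan is to proceed entirely by the determinantal/functorial definition of subresultants, and compare two Sylvester matrices of $\phi(a),\phi(b)$: the ``formal'' one of declared sizes $(n_1,n_2)$ inherited from $a,b$, and the ``actual'' one of sizes $(\nu_1,\nu_2)$ used to define $\S_j(\phi(a),\phi(b))$. Each coefficient of $\Sr_j(a,b)$ is, by definition, a fixed polynomial (a signed minor) in the coefficients $(a_i)_{0\le i\le n_1}$ and $(b_i)_{0\le i\le n_2}$. Since $\phi$ is a ring homomorphism, applying $\phi$ to that polynomial expression yields exactly the same minor computed with entries $\phi(a_i),\phi(b_i)$ in the Sylvester matrix of formal sizes $(n_1,n_2)$; call this the \emph{formal subresultant} $\Sr_j^{(n_1,n_2)}(\phi(a),\phi(b))$. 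The whole proof then reduces to comparing $\Sr_j^{(n_1,n_2)}(\phi(a),\phi(b))$ with $\S_j(\phi(a),\phi(b))$.

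For case~(1), with $\nu_2=n_2$ and $\nu_1\le n_1$, the formal Sylvester matrix differs from the actual one in that each of its $n_2$ ``$a$-shift'' rows carries $n_1-\nu_1$ leading zeros, while the $n_1$ ``$b$-shift'' rows keep their leading coefficient $\phi(\lc(b))$ intact (since $\nu_2=n_2$). I would expand the minor defining the coefficient of $y^\ell$ in $\Sr_j^{(n_1,n_2)}(\phi(a),\phi(b))$ along the $n_1-\nu_1$ leftmost columns. Only the top $n_1-\nu_1$ $b$-shift rows have nonzero entries in those columns, each such entry being $\phi(\lc(b))$ in a staircase pattern; the Laplace expansion thus pulls out the factor $\phi(\lc(b))^{n_1-\nu_1}$ and leaves the corresponding minor of the \emph{actual} Sylvester matrix of $\phi(a),\phi(b)$, giving exactly $\phi(\lc(b))^{n_1-\nu_1}\,\S_j(\phi(a),\phi(b))$. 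Case~(2) is obtained by the symmetric argument, swapping the roles of the $a$-shift and $b$-shift blocks.

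For case~(3), both leading blocks degenerate: the $n_1-\nu_1$ leftmost columns now vanish on every $a$-shift row, and the $n_2-\nu_2$ $b$-shift leading coefficients also vanish. Counting rows with forced leading zeros against available columns shows that in the submatrix defining any coefficient of $\Sr_j^{(n_1,n_2)}$ there are strictly more all-zero columns than the Laplace expansion can ``absorb''; equivalently, the matrix contains a zero submatrix whose rows and columns together exceed the size allowed by Frobenius' inequality, forcing the determinant to be $0$.

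The main obstacle is purely bookkeeping: one must track precisely which rows and columns of the formal Sylvester matrix receive leading zeros under $\phi$, and verify that the signs and index shifts coming from the staircase of shifted copies of $b$ (or of $a$) agree with the canonical normalization chosen in Definition~\ref{def:sub}. A clean way to organize this is to use the block/gap structure theorem for subresultants, which rephrases the relevant minors as evaluations of pseudo-remainders and makes the factor $\phi(\lc(b))^{n_1-\nu_1}$ (resp.\ $\phi(\lc(a))^{n_2-\nu_2}$) appear as the natural normalization constant of the specialized Sylvester matrix.
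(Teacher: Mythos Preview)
The paper does not prove this lemma at all: it is stated with the attribution ``Corollary~7.8.2 of~\cite{mishra1993}'' and used as a black box. So there is no in-paper proof to compare against; your sketch is already more than the paper provides.

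Your approach is the standard determinantal one, and is essentially what the cited reference does: use that each coefficient of $\Sr_j(a,b)$ is a fixed polynomial in the entries of the Sylvester matrix, apply $\phi$ entrywise, then compare the ``formal'' Sylvester matrix of declared sizes $(n_1,n_2)$ with the actual one of sizes $(\nu_1,\nu_2)$ via Laplace expansion along the leading columns. The staircase extraction of $\phi(\lc(b))^{n_1-\nu_1}$ in case~(1) (and symmetrically in case~(2)) is exactly right, and your caveat that the only real work is bookkeeping of row/column indices and sign conventions is honest and accurate.

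One small remark on case~(3): your ``too many zero columns'' argument works, but there is a one-line version. When $\nu_1<n_1$ and $\nu_2<n_2$, both $\phi(a_{n_1})=0$ and $\phi(b_{n_2})=0$, so the very first column of the formal Sylvester matrix (and hence of every square minor defining a coefficient of $\Sr_j$, since $j<n_2\le n_1$ forces that column to be retained) is identically zero. The determinant vanishes immediately, with no Frobenius-type rank count needed.
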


Writing $F_1:=a$ and $F_2:=b$, the subresultant p.r.s  $[F_1,\ F_2,\ \ldots,\ F_r,\ 0]$  
over $(k[x])[y]$ is defined through the formula below
  (sometimes called ``First kind subresultant p.r.s''~\cite[Definition~7.6.4]{mishra1993}):
\begin{eqnarray}
\nonumber F_3 & :=&  (-1)^{n_1-n_2+1} \prem( F_1,\ F_2)  \quad \text{and letting}\quad  c_3:=-1,\\
\label{eq:prs} 
 c_i &: =&  \left(\frac{ \lc(F_{i-2})}{c_{i-1}}\right)^{n_{i-3} - n_{i-2}} c_{i-1}
 \quad \text{for}\quad i\ge 4\\
 \nonumber F_i & :=&    \frac{\prem(F_{i-2} ,\ F_{i-1} )}{-\lc(F_{i-2}) (-c_i)^{n_{i-2} - n_{i-1}}},
\quad \text{for}\quad i\ge 4
\end{eqnarray}

\begin{Theo}[Subresultant's chain theorem --- Thm.~7.9.4 of~\cite{mishra1993}]\label{th:chain}
The subresultant p.r.s $[F_i]_{i\ge 3}$ and the subresultant chain $(\Sr_j(a,b))_{j=0,\ldots,n_2-1}$ are related
as follows:
$$
\Sr_{n_{j-1} -1} (a,\ b) = F_{j}, \quad \text{for}\quad j=3,\ldots, r.
$$
\end{Theo}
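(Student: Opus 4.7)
The plan is to reduce the statement to the standard ``block (or gap) structure theorem'' for subresultants and then show, by induction on $j$, that the normalizing scalars $c_j$ appearing in the recurrence~\eqref{eq:prs} are exactly what is needed to convert pseudo-remainders into subresultants.

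First I would recall the determinantal description of $\Sr_j(a,b)$: the coefficient of $y^i$ in $\Sr_j(a,b)$ is (up to sign) the minor of the Sylvester matrix of $a,b$ obtained by keeping the columns associated with $1,y,\ldots,y^{j-1}$ and $y^i$, and the appropriate rows. From this description one derives two classical identities that will be the workhorses of the proof. The first is Sylvester's identity, which produces a determinantal relation between consecutive subresultants $\Sr_{n_{j-1}-1}$, $\Sr_{n_{j-1}-2}$, etc. The second is the ``block structure'' lemma: if $F_j$ has degree $n_j < n_{j-1}-1$ (a defective step), then $\Sr_i(a,b)=0$ for $n_j < i < n_{j-1}-1$, while $\Sr_{n_j}(a,b)$ is a scalar multiple of $F_j$. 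These two facts together describe completely the shape of the subresultant chain between two consecutive polynomials of a p.r.s.

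Next, I would proceed by induction on $j\ge 3$. The base case $j=3$ follows directly by computing $\prem(F_1,F_2)$ and comparing, via Sylvester's identity, with $\Sr_{n_2-1}(a,b)$; the sign $(-1)^{n_1-n_2+1}$ in the definition of $F_3$ matches the sign produced by the determinantal computation. For the induction step, assume $F_{j-1}=\Sr_{n_{j-2}-1}(a,b)$ and $\Sr_{n_{j-2}}(a,b)=\lc(F_{j-1})^{n_{j-2}-n_{j-1}-1} F_{j-1}$ (the block-structure consequence). Applying Sylvester's identity to $F_{j-2}$ and $F_{j-1}$ expresses $\prem(F_{j-2},F_{j-1})$ as a determinant involving $\lc(F_{j-2})$ and the principal subresultant coefficients, and a direct computation shows that this determinant divided by $-\lc(F_{j-2})(-c_j)^{n_{j-2}-n_{j-1}}$ is precisely $\Sr_{n_{j-1}-1}(a,b)$. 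The factor $c_j$ defined recursively in~\eqref{eq:prs} is designed exactly so that this division is exact in $k[x]$ (and in particular, avoids spurious content being introduced at each pseudo-division step), which is the content of the ``subresultant p.r.s is well-defined'' part of the theorem.

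The main obstacle is bookkeeping the signs and the exponents: tracking the exact power of $\lc(F_{j-1})$ that appears on both sides of Sylvester's identity, and verifying that the closed-form expression of $c_j$ as a product of principal subresultant coefficients matches what is produced recursively. A clean way to handle this is to first prove by a separate induction the closed-form identity $c_j = (-1)^{\varepsilon_j}\prod \lc(F_i)^{\alpha_i}$ for appropriate exponents $\alpha_i$ and sign $\varepsilon_j$, and then insert it into Sylvester's identity; this decouples the combinatorics of the normalization from the determinantal manipulations. Since the whole statement is classical (Thm.~7.9.4 of~\cite{mishra1993}), I would at this point refer to that source rather than redo the sign/exponent bookkeeping in detail.
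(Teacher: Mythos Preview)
The paper does not prove this theorem at all: it is stated as a citation from Mishra's book (Thm.~7.9.4 of~\cite{mishra1993}) and used as a black box, with no argument supplied. Your proposal sketches the standard determinantal/block-structure proof and then defers the bookkeeping to the same reference, which is entirely consistent with---indeed more detailed than---the paper's own treatment.
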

$\Sr_{n_{j-1}-1}(a,\ b)$ is the top subresultant in the block it belongs to.

\section{Gr\"obner basis from modified p.r.s\label{sec:lF}}

The algorithm~\ref{algo:SToGB} ``$\SToGB$'', presented in paragraph~\ref{sec:SToGB},
produces a minimal lexGb of the ideal $\l a,\ b,\ p^e\r$.
It is  very simple if we take 
the modified subresultant
algorithm~\ref{algo:LNL}  ``$\LNN$''
for granted: one call to ``$\LNN$'' (Line~\ref{SToGB:iterate}),
a recursive call (Line~\ref{SToGB:G1})
and an update of the output of this
recursive call (Line~\ref{SToGB:G}).

On the other hand the algorithm~\ref{algo:LNL} ``$\LNN$'' is more technical, as often
the case when dealing with subresultants. The rough idea summarizes
as: ``make subresultants monic'' if their leading coefficient is not invertible modulo $T$,
then pursue the computations of subresultants through a recursive call, until a nilpotent
subresultant is found.
This simple description hides however the special care that corner cases and degree conditions
require. It thus might be useful
to have a look at Algorithm~\ref{algo:SToGB} ``$\SToGB$'' in prior Algorithm~\ref{algo:LNL} ``$\LNN$'',
as a motivation.

\subsection{Subresultant p.r.s. modulo $T$\label{sec:prs2}}
This section introduces the algorithm~\ref{algo:LNL} ``\LNN''.
Running the subresultant p.r.s over $k[x,y]$ yields
significant and unnecessary growth in the degree in $x$ of the coefficients. Only the image modulo $T$,
that is of degree in $x$ bounded by that of $T$, is needed.
Unfortunately, the formula~\eqref{eq:prs} formally works only over unique factorization domains.
The classic workaround consists in taking the homomorphic image by $\phi$:
\begin{Prop}\label{prop:prs}
  Let polynomials $F_1$ and $F_2$ in $(k[x])[y]$ of respective degrees (in $y$)
$n_1,\ n_2$, and let $\phi$ be the homomorphism $(k[x])[y]\to R[y]$ as defined above.
For a given integer $ 2\le i\le r+1$, if the $(\lc(F_{j-1}))_j$ are invertible modulo $T$ for $j=2,\ldots,\ i $, then 
the formula~\eqref{eq:prs} specializes well by $\phi$:

Write $\overline{c_i} = \phi(c_i)$.
For $j=3$ then, $\overline{c_3}:= -1$ and $\phi(F_3):=(-1)^{n_1-n_2+1} \prem(\phi(F_1),\ \phi(F_2))$. And,
for $i+1 \ge j \ge 4$:
  $$
\overline{c_j}  =  \left(\frac{ \lc(\phi(F_{j-2}))}{\overline{c_{j-1}}}\right)^{n_{j-3} - n_{j-2}} \overline{c_{j-1}}
\quad\quad  \nonumber \phi(F_j)  :=     \frac{\prem(\phi(F_{j-2}) ,\ \phi(F_{j-1}) )}{-\lc(\phi(F_{j-2})) (-\overline{c_j})^{n_{j-2} - n_{j-1}}}.
$$
In particular $\S_{n_{j-1}-1}(\phi(F_1),\ \phi(F_2)) = \phi(F_j)$ for $j=3,\ldots, i$, and can be computed modulo $T$
using formula~\eqref{eq:prs} (index $i$ included although $\lc(F_i)$ is not assumed invertible modulo $T$).
  \end{Prop}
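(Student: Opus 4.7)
The plan is to proceed by induction on $j$ for $3 \le j \le i$, proving jointly three facts: (a) $\overline{c_j}$ is a well-defined invertible element of $R$ given by the specialized recursion; (b) $\phi(F_j)$ is computed from the specialized form of formula~\eqref{eq:prs}; and (c) $\phi(F_j) = \S_{n_{j-1}-1}(\phi(F_1),\phi(F_2))$. The overarching observation is that, under the hypothesis, $\phi$ preserves the degrees (in $y$) of $F_1, \ldots, F_{i-1}$ and commutes with pseudo-division whenever the divisor's leading coefficient is invertible modulo $T$.

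For the base case $j=3$, $\overline{c_3}=-1$ is trivially invertible. Since $\lc(F_2)$ is invertible modulo $T$, the defining identity of $\prem(F_1,F_2)$ in $(k[x])[y]$ reduces modulo $T$; because $\phi(\lc(F_2))$ is invertible in $R$, the quotient and remainder of the reduced identity are uniquely determined, giving $\phi(\prem(F_1,F_2)) = \prem(\phi(F_1),\phi(F_2))$ and hence the claim for $\phi(F_3)$.

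For the inductive step, I would assume the three facts hold for indices below $j$. The element $c_j$ is built from $\lc(F_{j-2})$ (invertible by hypothesis) and $c_{j-1}$ (invertible by induction) via integer powers and multiplication, so $\overline{c_j}$ is well-defined, invertible, and commutes with $\phi$. The numerator $\prem(F_{j-2},F_{j-1})$ specializes correctly because $\lc(F_{j-1})$ is invertible modulo $T$; the denominator $-\lc(F_{j-2})(-c_j)^{n_{j-2}-n_{j-1}}$ is invertible in $R$, so the exact division in $(k[x])[y]$ descends to a well-defined division in $R[y]$. For (c), apply the subresultant chain theorem (Theorem~\ref{th:chain}) in $(k[x])[y]$ to obtain $F_j = \Sr_{n_{j-1}-1}(F_1,F_2)$, then apply Lemma~\ref{lem:hom}: the invertibility of $\lc(F_1)$ and $\lc(F_2)$ modulo $T$ forces $\nu_1 = n_1$ and $\nu_2 = n_2$, placing us in its first case with correction factor $\phi(\lc(F_2))^{n_1-\nu_1}=1$.

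The main obstacle is not any single deep difficulty but the careful bookkeeping of which reductions are legitimate: the commutation of $\phi$ with exact division in $(k[x])[y]$ and with pseudo-division both rest on the observation that an element of $k[x]$ invertible modulo $T$ remains invertible in $R$, which restores uniqueness of Euclidean division in $R[y]$ despite $R[y]$ being non-reduced. It is worth highlighting that $\lc(F_i)$ never appears as a divisor in the formulas that yield $F_j$ for $j \le i$; this accounts for the parenthetical remark at the end of the statement that the conclusion extends to the index $i$ even without assuming $\lc(F_i)$ invertible modulo $T$.
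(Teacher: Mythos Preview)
Your proposal is correct and follows essentially the same approach as the paper's own proof: both arguments hinge on the fact that invertibility of the leading coefficients modulo $T$ preserves the degree sequence, so that pseudo-division and the exact divisions in formula~\eqref{eq:prs} commute with $\phi$, after which the subresultant identity follows from Theorem~\ref{th:chain} combined with Lemma~\ref{lem:hom} in the degenerate Case~1 where the correction factor is $1$. Your presentation is simply a more explicitly inductive packaging of the same computation, and your closing remark about $\lc(F_i)$ never appearing as a divisor is exactly the observation the paper highlights.
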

\begin{proof}
  We need to show that $\phi$ commutes with all operations and primitives involved in the formula~\eqref{eq:prs}.
  First of all, $\lc(\phi(F_j))=\phi(\lc(F_j))$ for $j=2,\ldots,i-1$ by assumption. 
  Hence the Euclidean degree sequence $(n_j)_j$ is also that of the one initiated with
  $(\phi(F_1), \phi(F_2))$ too, up to $j < i$.
  Therefore, for $j=2,\ldots, i-1$, the pseudo-division equality $\lc(F_{j-1})^{n_j - n_{j-1}+1} F_j = q F_{j-1} + \prem(F_j,\ F_{j-1})$
  specializes by $\phi$: $\lc(\phi(F_{j-1}))^{n_j - n_{j-1}+1} \phi(F_j) = \phi(q) \phi(F_{j-1}) + \prem(\phi(F_j), \phi(F_{j-1}))$,
whence: $\phi(\prem(F_{j-1},\ F_j))= \prem(\phi(F_j), \phi(F_{j-1}))$, for $j=2,\ldots, i-1$.
Moreover  the pseudo-quotient and pseudo-remainder are uniquely determined.
  Besides, the formula  involves only algebraic operations which commute by definition with the  homomorphism $\phi$. Therefore, $\phi$
  commutes as expected. In particular, although $\lc(F_i)$ is not assumed to be invertible modulo $T$, we still have 
  $$\phi(F_i) = \phi\left(\frac{\prem(F_{i-1},\ F_{i-2})}{-\lc(F_{i-2})(-c_i)^{n_{i-2}-n_{i-1}}} \right)
  = \frac{\prem(\phi(F_{i-1}), \phi(F_{i-2}))}{-\lc(\phi(F_{i-2}))(-\bar{c_i})^{n_{i-2} - n_{i-1}}}.$$

  Finally, the assumptions made on $F_1$ and $F_2$ fall into Case 1. of Lemma~\ref{lem:hom}: $\phi(\Sr_j(F_1,\ F_2)) = \S_j(\phi(F_1),\ \phi(F_2))$.
  Besides, Theorem~\ref{th:chain} gives $F_j = \Sr_{n_{j-1}-1}(F_1,\ F_2)$ hence 
$$
\phi(F_j) = \S_{n_{j-1}-1}(\phi(F_1),\ \phi(F_2))\quad\text{for} \quad j=2,\ldots, i.
$$
  \end{proof}

\begin{Cor}\label{cor:prs}
With the  notations as in Proposition~\ref{prop:prs}, let $f_1:=\phi(F_1)$ and $f_2:=\phi(F_2)$.
Consider the p.r.s $F_1,\ F_2, \ldots, F_r,\ F_{r+1}$  computed in $(k[x])[y]$ with $F_{r+1}=0$. Let  $f_1,\ f_2,\ f_3,\ \ldots,\ f_i,\ f_{i+1}$ 
be the one computed  modulo $T$ as explained in Proposition~\ref{prop:prs}, under the assumption
that the $(\lc(F_j))_{1 \le j \le i} $'s are all invertible modulo $T$. Assume that  $\lc(F_{i+1})$ is not invertible modulo $T$ or $F_{i+1}\equiv 0 \bmod T$. 

We have $\l F_1,\ F_2,\ T\r= \l f_{j-1},\ f_j, \ T\r$ for any $2\le j \le i+1$.
\end{Cor}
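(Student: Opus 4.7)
The plan is to chain together the equalities $\langle F_1,F_2,T\rangle = \langle f_1, f_2, T\rangle = \langle f_2, f_3, T\rangle = \cdots = \langle f_i, f_{i+1}, T\rangle$. Since every ideal here contains $T$, working modulo $T$ (i.e.\ in $R[y]$ via $\phi$) loses no information: two ideals of $k[x,y]$ containing $T$ coincide iff their images in $R[y]$ do. So it suffices to establish, for each $3 \le j \le i+1$, the equality
\[
\langle f_{j-2},\ f_{j-1}\rangle \ =\ \langle f_{j-1},\ f_j\rangle \qquad \text{in } R[y].
\]

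The key identity comes from the pseudo-division underlying the p.r.s. By definition there exists $q_{j-1}\in (k[x])[y]$ with
\[
\lc(F_{j-1})^{n_{j-2}-n_{j-1}+1}\,F_{j-2} \ =\ q_{j-1}\,F_{j-1} + \prem(F_{j-2},\,F_{j-1}),
\]
and the formula~\eqref{eq:prs} rewrites $\prem(F_{j-2},F_{j-1})$ as $-\lc(F_{j-2})(-c_j)^{n_{j-2}-n_{j-1}}\,F_j$. Applying $\phi$ and using Proposition~\ref{prop:prs} (which guarantees the homomorphism commutes with $\prem$, $\lc$ and the constants $c_j$ up to index $i+1$) gives
\[
\phi(\lc(F_{j-1}))^{n_{j-2}-n_{j-1}+1}\,f_{j-2}\ =\ \phi(q_{j-1})\,f_{j-1}\ -\ \phi(\lc(F_{j-2}))(-\bar c_j)^{n_{j-2}-n_{j-1}}\,f_j.
\]
This identity holds in $R[y]$ for every $3\le j \le i+1$ (note that the existence of $\phi(F_{i+1})$ in the formula is precisely the point of including index $j=i+1$ in Proposition~\ref{prop:prs}, even though $\lc(F_{i+1})$ need not be invertible).

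Now the inclusion $\langle f_{j-1},f_j\rangle \subseteq \langle f_{j-2},f_{j-1}\rangle$ is immediate since the right-hand side of the displayed identity lies in $\langle f_{j-1},f_j\rangle$ while its left-hand side equals an invertible scalar times $f_{j-2}$. For the reverse inclusion I need to observe that the coefficient $\phi(\lc(F_{j-1}))^{n_{j-2}-n_{j-1}+1}$ is a unit in $R$, by the standing hypothesis that $\lc(F_{j-1})$ is invertible modulo $T$ for $j-1 \le i$; thus $f_{j-2}$ lies in $\langle f_{j-1},f_j\rangle$ and we get the opposite containment. Chaining these equalities from $j=3$ to $j=i+1$ yields the result. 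The only delicate bookkeeping step is verifying that each constant $c_j$ and each leading coefficient $\lc(F_{j-1})$ appearing in the denominators is in fact invertible modulo $T$; but this is exactly the hypothesis of the corollary together with the recursive definition of $c_j$ as a product and quotient of such invertible elements, which is already handled by Proposition~\ref{prop:prs}.
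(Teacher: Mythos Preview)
Your argument is correct and follows the same underlying mechanism as the paper's proof: both rest on the pseudo-division identity and the invertibility of the leading coefficients to conclude that consecutive pairs generate the same ideal modulo $T$. The paper's version is shorter only because it first quotes the classical fact that over the UFD $k[x]$ the p.r.s.\ satisfies $\langle F_1,F_2\rangle=\langle F_{j-1},F_j\rangle$ for all $j$, and then just reduces modulo $T$ via $\phi(F_j)=f_j$ from Proposition~\ref{prop:prs}; you instead re-derive that chain of equalities directly in $R[y]$, which is fine and in fact makes the role of the invertibility hypothesis more transparent.

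One expository slip to fix: your two inclusion arguments are mislabeled and in fact both establish the same direction. The reasoning ``right-hand side lies in $\langle f_{j-1},f_j\rangle$, left-hand side is a unit times $f_{j-2}$'' shows $f_{j-2}\in\langle f_{j-1},f_j\rangle$, i.e.\ $\langle f_{j-2},f_{j-1}\rangle\subseteq\langle f_{j-1},f_j\rangle$, and your ``reverse inclusion'' paragraph repeats this. The genuinely missing direction is $f_j\in\langle f_{j-2},f_{j-1}\rangle$, obtained by reading the identity the other way and using that $\phi(\lc(F_{j-2}))$ and $\bar c_j$ are units in $R$ --- which is exactly the invertibility bookkeeping you flag in your last sentence, so the mathematics is all there; just straighten out which sentence proves which containment.
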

\begin{proof}
Over the unique factorization domain $k[x]$, it is classical that
the subresultant p.r.s verifies $\l F_1,\ F_2\r=\l F_{j-1},\ F_j\r$ for any $2\le j\le r+1$.
By Proposition~\ref{prop:prs}, $\phi(F_j)=f_j$ for $j=1,\ldots, i+1$, hence:
$$
\l F_1,\ F_2,\ T\r=\l \phi(F_1), \phi(F_2),\ T\r = \l \phi(F_{j-1}),\ \phi(F_j),\ T\r = \l f_{j-1},\ f_j,\ T\r,
$$
for $j=2,\ldots, i+1$ \end{proof}

\begin{algorithm}[H]

\DontPrintSemicolon

\KwIn{1. polynomials $f_1,\ f_2\in k[x,y]$, $\deg_y(f_1)\ge \deg_y(f_2)$. The leading coefficient of $f_1$ is invertible modulo $T$,
and $f_2\not=0$.

  2. $T=p^e\in k[x]$, the power of an irreducible polynomial $p$.
}

\KwOut{1-2. monic (in $y$) polynomials $u, v\in k[x,y]$
  verifying the degree condition of Proposition~\ref{prop:LNL}.

  3-4. monic polynomials $U,\ T_1\in k[x]$ such that: \hfill $ T_1 U = T,\qquad \l f_1, \ f_2 , \ T\r =  \l u,\ v \, U ,\ T\r$.
}

\BlankLine

\Repeat(\nllabel{LNL:prs})
{
  $\lc(f_i)$ is {\em not} invertible modulo $T$ or $f_{\ell+1}=0$
}
{
  Compute the subresultant p.r.s $f_1,\ f_2,\ldots,\ f_\ell,\ f_{\ell+1}$  of $f_1$ and $f_2$ modulo $T$
  (with $f_{\ell+1}=0$, following Proposition~\ref{prop:prs})
}

\If(\tcp*[f]{$\lc(f_j)$ is invertible modulo $T$ for $j <i$})
   {$\lc(f_i)$ is not invertible modulo $T$\nllabel{LNL:if}}{
      $(\, b ,\ U\, ) \leftarrow \MF(f_i,\ T)$ \nllabel{LNL:MF1}\tcp*[r]{$b$ is monic and $\l U b,\ T\r=\l f_i, \ T\r$}
       $a \leftarrow (\lc(f_{i-1})^{-1} \bmod T) f_{i-1} \bmod T$\nllabel{LNL:a}        \tcp*[r]{$a$ is monic and $\l a,\ T\r =\l f_{i-1},\ T\r$}
      \eIf(\tcp*[f]{$f_i$ is nilpotent}){$\deg_x(U)>0$ \nllabel{LNL:if2}}{
        \Return{$(\, a,\ b,\ U,\ T/U \, )$} \nllabel{LNL:ret1}\;
      }(\tcp*[f]{$f_i$ is not nilpotent})
          {
      \Return{ $\LNN(a ,\ b ,\ T)$}\nllabel{LNL:ret2}\tcp*[r]{Recursive call}
          }
   }
   
\tcp*[h]{Here, all the $\lc(f_1),\ldots,\ \lc(f_\ell)$ are invertible modulo $T$, and $f_{\ell+1}=0$}\;
\Return{$(\ (\lc(f_\ell) \bmod T)^{-1} f_\ell ,\ 0,\ 1,\ T\ )$\nllabel{LNL:ret3}}
\caption{\label{algo:LNL} \quad $(\, u,\ v,\ U,\ T_1\, ) \leftarrow \LNN(f_1,\ f_2,\ T)$}
\end{algorithm}

\begin{Prop}[Correctness of Algorithm~\ref{algo:LNL}]\label{prop:LNL}
  The output $u,\ v, \ U,\ T_1$ of $\LNN(f_1,\ f_2,\ T)$  verifies:
  $$
\l u,\ v U,\ U T_1\r =  \l f_1,\ f_2,\ T\r \quad\text{and}\quad U T_1 = T.
$$
If $v=0$ then $U=1$ and $T_1=T$.
Moreover, we also have the following {\em degree conditions:}
\begin{equation}\label{eq:deg-cond}
\text{if} \quad v\not=0 \quad \text{then}\quad \deg_y(v)<\deg_y(u)\quad\text{and}
\quad \deg_y(u)<\deg_y(f_1),
\end{equation}
except in the following corner cases:
\begin{enumerate}[(i)]
\item\label{enum:LNN-i} $\deg_y(u) = \deg_y(v)$ possibly holds if $u$ and $v$ are respectively the monic form of $f_1$ and $f_2$
  with $f_2$  nilpotent (implying $0<\deg_x(U)<\deg_x(T)$) and $\deg_y(f_2)=\deg_y(f_1)$. 
 
\item\label{enum:LNN-ii} $\deg_y(u)=\deg_y(f_1)$ possibly holds under the condition \ref{enum:LNN-i}  above.

\item\label{enum:LNN-iii}  The only other situation where $\deg_y(u)=\deg_y(f_1)$ 
is when $\lc(f_2)$ is not nilpotent (equivalently is invertible) modulo $T$,
and $\lc(f_2)f_1 \equiv \lc(f_1)f_2 \bmod \l p\r$.

In this case, $u$ is the monic form of $f_2$ and $v$ that of $\pm f_3$.
\end{enumerate}
\end{Prop}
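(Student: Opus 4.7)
The plan is to proceed by strong induction on $\deg_y(f_2)$, exploiting the fact that the recursive call at Line~\ref{LNL:ret2} is always invoked with a second argument of strictly smaller $y$-degree, and then to perform a case analysis on the return line.

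For termination, I would verify that at Line~\ref{LNL:ret2} one has $\deg_y(b) < \deg_y(f_2)$. When the non-invertible leading coefficient occurs at index $i=2$, $\lc(f_2)$ is by assumption not invertible, so the call to $\MQ$ inside $\MF$ returns a monic polynomial of degree equal to the largest index of an invertible coefficient of $f_2$, which is strictly less than $\deg_y(f_2)$. When $i\ge 3$, the strict degree decrease in the subresultant p.r.s.\ gives $\deg_y(b)\le \deg_y(f_i) < \deg_y(f_{i-1}) \le \deg_y(f_2)$.

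I would then establish the ideal equality in each case. For a return at Line~\ref{LNL:ret3}, all $\lc(f_j)$ are invertible modulo $T$ for $j \leq \ell$ and $f_{\ell+1}=0$, so Corollary~\ref{cor:prs} applied with index $\ell+1$ yields $\l f_1, f_2, T\r = \l f_\ell, T\r$, which matches the output $(u,0,1,T)$ after rescaling $f_\ell$ by $\lc(f_\ell)^{-1}$. For a return at Line~\ref{LNL:ret1}, Lemma~\ref{lem:MF} gives $\l b, T/U\r = \l f_i/U, T/U\r$; multiplying by $U$ yields $\l Ub, T\r = \l f_i, T\r$. Combined with $\l a, T\r = \l f_{i-1}, T\r$ (a mere rescaling, valid since $\lc(f_{i-1})$ is invertible) and Corollary~\ref{cor:prs} with index $i$, this produces $\l f_1, f_2, T\r = \l a, Ub, T\r = \l u, vU, UT_1\r$ with $UT_1 = T$. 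For a return at Line~\ref{LNL:ret2}, the same reasoning with $U=1$ gives $\l a, b, T\r = \l f_1, f_2, T\r$, and the induction hypothesis applied to $\LNN(a,b,T)$ concludes.

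The main obstacle is verifying the degree conditions, especially the three corner cases. Generically, a return at Line~\ref{LNL:ret1} with $i\ge 3$ satisfies $\deg_y(u) = \deg_y(f_{i-1}) < \deg_y(f_1)$ and $\deg_y(v)\le \deg_y(f_i) < \deg_y(u)$. Cases (i) and (ii) arise when $i=2$, forcing $u$ to be the monic form of $f_1$ itself; equality $\deg_y(u) = \deg_y(v)$ then occurs precisely when $\deg_y(f_2) = \deg_y(f_1)$ and the leading coefficient of $f_2/U$ is invertible modulo $T/U$, which is case (i). The most delicate is case (iii): since $u$ must equal the monic form of $f_{i-1}$, the equality $\deg_y(u) = \deg_y(f_1)$ combined with the fact that $\lc(f_2)$ must be invertible (to compute $f_3$) can only occur at $i=3$ with $\deg_y(f_2) = \deg_y(f_1)$. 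Under this degree coincidence, the pseudo-remainder simplifies to $\prem(f_1,f_2) = \lc(f_2)f_1 - \lc(f_1)f_2$, and $f_3 = -\prem(f_1,f_2) = \lc(f_1)f_2 - \lc(f_2)f_1$ is nilpotent modulo $T = p^e$ iff $\sqfp(T) = p$ divides it, i.e., iff $\lc(f_2)f_1 \equiv \lc(f_1)f_2 \pmod{\l p\r}$, matching case (iii). To complete the argument, I would rule out that recursive calls introduce new instances of $\deg_y(u) = \deg_y(f_1)$: since $a$ and $b$ are both monic with $\deg_y(b) < \deg_y(a)$ strict, the corner hypotheses (i)--(iii) applied to the recursive inputs all fail (each requires the recursive $f_2$-degree to equal the recursive $f_1$-degree), leaving only the recursive generic inequality $\deg_y(u_{\mathrm{rec}}) < \deg_y(a)$ to propagate upward, which combined with $\deg_y(a)\le \deg_y(f_1)$ yields the outer generic condition.
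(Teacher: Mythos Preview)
Your proposal is correct and follows essentially the same approach as the paper: case analysis on the return line, using Corollary~\ref{cor:prs} for the ideal equalities, the degree drop from Weierstrass factorization for termination at Line~\ref{LNL:ret2}, and the observation that the recursive call is always made with strictly $\deg_y(b)<\deg_y(a)$ so that the corner cases cannot recur inside. The only organisational difference is that the paper argues the degree conditions by counting the total number of pseudo-divisions performed (at least two, exactly one, or none) across all recursive calls, whereas you phrase it as a strong induction on $\deg_y(f_2)$ with a direct case split on the index $i$; these are two packagings of the same argument.
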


\begin{ex}[corner cases in the degree condition]\label{ex:corner-cases}
  An example of corner cases \ref{enum:LNN-i}-\ref{enum:LNN-ii} is 
$$
f_1 = y + p,\qquad f_2= p y + p \quad\text{modulo}\quad T=p^2.
$$
Remark that  $f_2$ is nilpotent and  $(\, y+1,\ p \, )\leftarrow  \MF(f_2,\ T)$.
This is thus the first nilpotent subresultant and we have $v=y+1$ and $U=p$.
Besides $f_1$ is not nilpotent and already in monic form, thus $u=f_1$.
Note that $U=p$ verifies $0 < \deg_x(U) < \deg_x(T)$.
We have $\deg_y(v)=\deg_y(y+1)=1=\deg_y(y+p)=\deg_y(u)$
(degree equality~\ref{enum:LNN-i}) and $\deg_y(u)=1=\deg_y(f_1)$ (degree equality~\ref{enum:LNN-ii}).

An example of corner case~\ref{enum:LNN-iii} is when 
$$
f_1=y+p,\qquad f_2=y\quad \text{modulo}\quad T=p^2.
$$
We see directly that $\lc(f_2) f_1 \equiv \lc(f_1) f_2 \equiv y \bmod \l p\r$.
Then $\pm f_3= \prem(f_1,\ f_2)= p$ is the first nilpotent subresultant.
We thus have  $(\, v,\ U\, )\leftarrow \MF(f_3,\ T)$ with   $v=1$ and $U=p$.
Moreover $u=f_2$, being the last non nilpotent,
and already in monic form.
Finally, note 
the degree equality~\ref{enum:LNN-iii}: $\deg_y(f_1)=1=\deg_y(u)$. \hfill \qed
  \end{ex}
\begin{proof}
We start by the proof of correctness before turning to the proof of the degree conditions~\eqref{eq:deg-cond}.

{\em Proof of correctness.} We investigate the three returns in the algorithm separately.

{\em Case 1: Exit at Line~\ref{LNL:ret3}.} The algorithm does not enter Lines~\ref{LNL:if}-\ref{LNL:ret2},
which means that the if-test Line~\ref{LNL:if} was never passed: the leading coefficients $\lc(f_1),\ldots,\lc(f_\ell)$
in the p.r.s  are all invertible modulo $T$,
and $f_{\ell+1}=0$.
The output at Line~\ref{LNL:ret3} is 
$$
u= (\lc(f_\ell) \bmod T)^{-1}  f_\ell \bmod T,
\quad v=0,\quad   U=1,
\quad T_1=T.
$$ 
Therefore $U T_1 =T$. Moreover, 
$\l u,\ v U,\ T\r = \l u,\ T\r$.
Note that $u$ is monic
verifying $\l u,\ T\r = \l f_\ell ,\ T\r$.
Besides, by Corollary~\ref{cor:prs},  $\l f_1,\ f_2,\ T\r=\l f_\ell,\ f_{\ell+1},\ T\r$, and since $f_{\ell+1}=0$ we get
$\l f_1,\ f_2,\ T\r =\l f_\ell,\ T\r$.
We obtain: $\l f_1, \ f_2, \ T\r = \l u,\ T\r$ as expected.

In the remaining two cases, $v$ is not zero, which proves the assertion that if $v=0$ then $U=1$, and $T_1=T$. 
\smallskip

{\em Case 2: Exit at Line~\ref{LNL:ret1}.}
The if-test Line~\ref{LNL:if} tells that 
$\lc(f_i)$ is not invertible modulo $T$, while the $\lc(f_j)$s are for $j<i$.
At Line~\ref{LNL:MF1} ``$(\, b ,\ U\, ) \leftarrow \MF(f_i,\ T)$''
we have from the definition of the ``$\MF$'' algorithm
that $\l U b,\ T \r =\l f_i,\ T\r$,
with $U|T$ and $b$ monic.
Moreover $\deg_x(U)>0$ (the if-test at Line~\ref{LNL:if2}) implies that $f_i$ is nilpotent.
On the other hand, $f_{i-1}$ is invertible modulo $T$: the inversion at Line~\ref{LNL:a}
is correct and $a$ is monic verifying $\l a,\ T\r = \l f_{i-1},\ T\r$. By Corollary~\ref{cor:prs} we have
$\l f_1,\ f_2,\ T\r = \l f_{i-1},\ f_i,\ T \r=\l a,\ f_i,\ T\r = \l  a,\ U b, \ T\r$.
This is what we wanted to prove in this case 2.

\smallskip

{\em Case 3: Exit at Line~\ref{LNL:ret2}.} There, $\lc(f_i)$ is not invertible modulo $T$, and since $\deg_x(U)=0$, $f_i$ is not nilpotent.
As above, we have $\l b,\ T\r=\l f_i,\ T\r$.
Additionally, $\deg_y(b)<\deg_y(f_i)$:
$f_i$ is not nilpotent but $\lc(f_i)$ is, so the Weierstrass
factorization of $f_i$ produces a monic polynomial $b$
of degree smaller than that of $f_i$.
And since $\lc(f_{i-1})$ is invertible modulo $T$,  $\l a,\ T\r=\l f_{i-1},\ T\r$.
The recursive call is thus made with monic polynomials $a$ and $b$ of degree smaller than that of $f_1$ and $f_2$; Indeed, observe that:
\begin{equation}\label{eq:degLNL}
\deg_y(b) < \deg_y(f_i) \le \deg_y(f_2) 
,\qquad
\deg_y(a) = \deg_y(f_{i-1}), \qquad (\text{hence}
\ \deg_y(b)<\deg_y(a))
\end{equation}
Therefore, the recursive call ultimately boils down to Case 1: its output
$(\, u,\ v,\ U,\ T_1\, )$ verifies $\l a,\ b,\ T\r =\l u,\ v U,\ T_1 U\r$ and $T_1 U=T$. Since $\l a,\ b, \ T\r = \l f_{i-1},\ f_i,\ T\r$,
and that by Corollary~\ref{cor:prs} we have $\l f_1,\ f_2,\ T\r=\l f_{i-1},\ f_i, \ T\r$. We conclude that
$\l u,\ v U,\ T_1 U\r =\l f_1,\ f_2,\ T\r$.
\medskip

{\em Proof of the degree conditions~\eqref{eq:deg-cond}.}
If at least two pseudo-divisions occur in the algorithm (including in recursive calls),
they induce at least two strict degree decreases
 in the modified p.r.s computed.
Both $u$ and $v$ being the monic form of the last two polynomials in that modified p.r.s,
it follows that:
$$
\text{if } v\not=0, \quad \deg_y(v) < \deg_y(u)<\deg_y(f_2)\le\deg_y(f_1)
$$
which proves the degree conditions~\eqref{eq:deg-cond}.
No corner cases can happen in this situation.

If exactly one pseudo-division occurs then:
\begin{enumerate}[-1- ]
\item\label{enum:-1-} either $\lc(f_2)$ is invertible modulo $T$ and $f_3=\pm \prem(f_1,\ f_2)$
\item\label{enum:-2-} or not and then, letting $b$ be the monic form of $f_2$ (Line~\ref{LNL:MF1}),
  and $a$ that of $f_1$ (Line~\ref{LNL:a}), a recursive call ``$(\, u,\ v,\ U,\ T\, ) \leftarrow
  \LNN(a,\ b,\ T)$'' is performed
 (Line~\ref{LNL:ret2}). 
But then, $a$ and  $b$ being monic at least a pseudo-division should take place inside this recursive call, a contradiction.
This situation~\ref{enum:-2-} cannot happen in case of one pseudo-division only.
\end{enumerate}
Note that $f_2$ cannot be nilpotent for one pseudo-division to occur.
The algorithm then stops, meaning that $f_3$ is nilpotent (which may be zero according to our convention).
Then $v$ is its monic form, and $u$ is the monic form of $f_2$. We have:
\begin{equation}\label{eq:LNN-f3}
\text{if } v\not=0, \quad \deg_y(v) \le  \deg_y(f_3) <  \deg_y(f_2) = \deg_y(u) \le\deg_y(f_1)
\end{equation}
which proves the degree conditions~\eqref{eq:deg-cond}.
For $\deg_y(u) = \deg_y(f_1)$ (possible corner cases~\ref{enum:LNN-i}-\ref{enum:LNN-ii} ) to hold,
necessarily $\deg_y(u)=\deg_y(f_2)=\deg_y(f_1)$. The pseudo-division of $f_1$ by $f_2$ writes as:
$$
\lc(f_2)^{\deg_y(f_2)-\deg_y(f_1)+1} f_1 = \pquo(f_1,\ f_2) f_2 \pm f_3.
$$
The degree equality $\deg_y(f_1)=\deg_y(f_2)$ implies that $\pquo(f_1,\ f_2)=\lc(f_1)$,
and since $f_3 \equiv 0 \bmod \l p\r$:
$$
\lc(f_2) f_1 \equiv \lc(f_1) f_2 \bmod \l p\r.
$$
This is the corner case~\ref{enum:LNN-iii}. Corner cases~\ref{enum:LNN-i}-\ref{enum:LNN-ii}
do not occur.

If no pseudo-division occurs at all, then $f_2$ is already the ``first nilpotent''.  The
polynomials $u$ and  $v$ are then respectively the monic form of $f_1$ and $f_2$.
Since $\lc(f_1)$ is invertible modulo $T$ by assumption, $u\equiv \lc(f_1)^{-1}  f_1 \bmod T$,
in particular $\deg_y(u)=\deg_y(f_1)$: this is corner case~\ref{enum:LNN-ii}.
We can then observe that:
$$
\deg_y(v)\le \deg_y(f_2) \le \deg_y(f_1)=\deg_y(u).
$$
This proves the degree conditions~\eqref{eq:deg-cond}.
For $\deg_y(v)=\deg_y(u)$ to hold, the condition $\deg_y(f_1)=\deg_y(f_2)$
is necessary, proving the situation of corner case~\ref{enum:LNN-i}.
\end{proof}

\subsection{Deducing the Gr\"obner basis\label{sec:SToGB}}
The main algorithm~\ref{algo:SToGB} essentially iterates
the
algorithm~\ref{algo:LNL} ``$\LNN$'' through recursive calls.
We refer to Example~\ref{ex:trois} in Introduction.


\begin{algorithm}[H]
  \SetAlgoVlined 
  \DontPrintSemicolon 
\KwIn{1.-2. polynomials $a ,\ b\in k[x,y]$, $\deg_y(a)\ge \deg_y(b)$. The leading coefficient of $a$ is invertible modulo $T=p^e$,
$b$ is not nilpotent modulo $T$ (which also means $b\not=0$ by our convention).

  3. The power of an irreducible polynomial $T=p^e\in k[x]$}

\KwOut{A minimal lexGb $\calG$ of the ideal $\l a,\ b, \ T\r$.}

\BlankLine

\If{$a$ or $b$ is a non-zero constant}{\Return{$[1]$}\nllabel{SToGB:ret1}}
 
    $(\, u,\ v,\ U,\ T_1\, )  \leftarrow \LNN(a,\ b,\ T)$
    \nllabel{SToGB:iterate} \tcp*[r]{$\l u,\ v U,\ U T_1\r = \l a,\ b,\ T\r$}
    \If{$u =1$}{\Return{$[1]$}\nllabel{SToGB:ret2}}
\If{$v=0$}{\Return{$[u,\ T]$}\nllabel{SToGB:ret0}}
     $\calG_1\leftarrow \SToGB(u,\ v,\ T_1)$ \nllabel{SToGB:G1}\tcp*[r]{Recursive call}
       $\calG \leftarrow [u] \cat [U\cdot g\ :\ g \in \calG_1] $\nllabel{SToGB:G}
       \tcp*[r]{Update of the output of the recursive call}
     \Return{$\calG$\nllabel{SToGB:ret4}}
        
 \caption{\label{algo:SToGB} \quad $\calG \leftarrow \SToGB(a,\ b,\ T)$ }
\end{algorithm}

\begin{Theo}\label{th:SToGB}
The output $\calG$ is a minimal lexGb of $\l a,\ b,\ T\r$.
\end{Theo}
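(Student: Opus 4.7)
The plan is to proceed by induction on $\deg_x(T)$, with Lines~\ref{SToGB:ret1}, \ref{SToGB:ret2}, and \ref{SToGB:ret0} as base cases and Line~\ref{SToGB:ret4} as the inductive step. Lines~\ref{SToGB:ret1} and~\ref{SToGB:ret2} both return $[1]$, which should be immediate: in each case $\l a,b,T\r=\l 1\r$, either because one of $a,b$ is a unit in $k[x,y]$, or because the identity $\l a,b,T\r=\l u,vU,T\r$ from Proposition~\ref{prop:LNL} combined with $u=1$ forces it. At Line~\ref{SToGB:ret0}, Proposition~\ref{prop:LNL} says that $v=0$ forces $U=1$ and $T_1=T$, so $\l a,b,T\r=\l u,T\r$; since $u$ is monic in $y$ and $T$ is monic non-constant in $k[x]$, Lazard's Theorem~\ref{th:Laz}(A) with $\ell=2$ (taking $h_1=T$, $g_2=u$) yields that $[u,T]$ is a minimal lexGb.

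For the inductive step (Line~\ref{SToGB:ret4}, $v\ne 0$), my first move will be to observe that \LNN{} outputs $v\ne 0$ only via its Line~\ref{LNL:ret1}, where $\deg_x(U)>0$; hence $\deg_x(T_1)<\deg_x(T)$, and the inductive hypothesis can be invoked on the recursive call $\SToGB(u,v,T_1)$. Its input specifications hold: $u$ and $v$ are monic in $y$ (so $\lc(u)=1$ is invertible and $v$ is not nilpotent), $T_1=T/U$ is still a power of $p$, and $\deg_y(u)\ge\deg_y(v)$ by Proposition~\ref{prop:LNL}'s degree conditions. Thus $\calG_1$ will be a minimal lexGb of $\l u,v,T_1\r$. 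I would then apply Lazard's Theorem~\ref{th:Laz}(B) with $\calL=\calG_1$, $h_0=U$, $g_{\ell+1}=u$: here $U$ is monic non-constant, the membership $u\in\l\calG_1\r=\l u,v,T_1\r$ is trivial, and, combined with $\l a,b,T\r=\l u,vU,UT_1\r$ from Proposition~\ref{prop:LNL}, this yields that $\calG=[u]\cat[Ug:g\in\calG_1]$ is a minimal lexGb of $\l a,b,T\r$. The degenerate sub-case $\calG_1=[1]$ (which may arise if the inner $v$ equals $1$) would be handled directly by Lazard~(A) applied to $\calG=[u,U]$.

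The hard part will be checking the strict degree hypothesis $\deg_y(u)>\deg_y(g_\ell)$ demanded by Lazard~(B), where $g_\ell$ denotes the maximum-$y$-degree element of $\calG_1$. A short structural induction on $\SToGB$ shows that $g_\ell$ coincides with the $u$-output of $\LNN(u,v,T_1)$: every other element of $\calG_1$ carries a non-constant $x$-factor (contributed either by a deeper recursive call's $U'$ or by the $T$-slot of a base return) and so fails to be monic in $y$. Proposition~\ref{prop:LNL}'s degree condition~\eqref{eq:deg-cond} then delivers the required strict inequality unless one of corner cases~\ref{enum:LNN-i}--\ref{enum:LNN-iii} triggers inside $\LNN(u,v,T_1)$. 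Case~\ref{enum:LNN-i} requires the second input to be nilpotent and is ruled out because $v$ is monic in $y$; case~\ref{enum:LNN-ii} is subsumed by~\ref{enum:LNN-i}; case~\ref{enum:LNN-iii} would force $\deg_y(u)=\deg_y(v)$, which cannot happen. Indeed, the outer input assumption ``$b$ not nilpotent'' excludes corner case~\ref{enum:LNN-i} in the outer $\LNN(a,b,T)$ call, and a brief case analysis (general case versus the outer corner case~\ref{enum:LNN-iii}, where $v$ is the monic form of $\pm\prem(a,b)$ of $y$-degree $<\deg_y(a)=\deg_y(u)$) recovers $\deg_y(v)<\deg_y(u)$ strictly in all situations. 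This closes the induction.
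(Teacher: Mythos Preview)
Your proof is correct and follows essentially the same route as the paper's: case analysis on the four returns, Lazard~(B) for the recursive step, and the same corner-case exclusion (outer $b$ not nilpotent rules out~\ref{enum:LNN-i}, hence $\deg_y(v)<\deg_y(u)$, hence~\ref{enum:LNN-ii}--\ref{enum:LNN-iii} cannot trigger in the inner $\LNN(u,v,T_1)$). The one noteworthy difference is your induction variable: you induct on $\deg_x(T)$, observing that $v\ne 0$ forces the return of $\LNN$ to come through its Line~\ref{LNL:ret1} with $\deg_x(U)>0$, so $\deg_x(T_1)<\deg_x(T)$; the paper instead argues termination via a $y$-degree decrease in $(a,b)\to(u,v)$, which requires an extra pass through the corner cases. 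Your choice is cleaner and avoids that duplication. A small omission: you should spell out the one-line computation $\l\calG\r=\l u\r+U\l\calG_1\r=\l u\r+U\l u,v,T_1\r=\l u,vU,T\r=\l a,b,T\r$, since Lazard~(B) by itself only certifies that $\calG$ is a minimal lexGb, not which ideal it generates.
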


\begin{proof}
Among the four returns in the algorithm, the first three ones are base cases
treated in Cases 1, 2, 3 below. The last return (Case 4) involves a recursive call and requires more care.

{\em Case 1: Exit at Line~\ref{SToGB:ret1}.} Here $a$ or $b$ is a non-zero constant, thus $\l a,\ b,\ T\r = \l 1\r$ and the output should be $[1]$.

{\em Case 2: Exit at Line~\ref{SToGB:ret2}.} By the input' Specification 1., $a$ is assumed to  have an invertible leading
coefficient modulo $T$, and by input' Specification 2. $b$ is not nilpotent (which implies that $b\not=0$
according to our convention).
This legitimates the call to $\LNN(a,\ b,\ T)$ at Line~\ref{SToGB:iterate}, according
that $a$ and $b$ should verify these assumptions.
By definition of ``\LNN'', the output $(\, u,\ v,\ U,\ T_1\, )$ verifies $\l u, \ v U,\ U T_1\r=\l a,\ b,\ T\r$. So that if $u=1$ then $\l a,\ b, \ T\r=\l 1\r$
and the output should be $[1]$. This is precisely what returns the algorithm at Line~\ref{SToGB:ret2}.

{\em Case 3: Exit at Line~\ref{SToGB:ret0}.}
Here $v=0$, which implies $U=1$, $T_1=T$ and
$\l a,\ b, \ T\r = \l u,\ v T_1,\ U T_1\r=\l u, \ T \r$
by Proposition~\ref{prop:LNL}.
The return  precisely outputs  $[u,\ T]$,
which is a lexGb (actually a bivariate triangular set).
 \smallskip

{\em Case 4: Exit at Line~\ref{SToGB:ret4}.}  
The recursive call at Line~\ref{SToGB:G1} makes sense, its input $u$, $v$ and $T_1$ satisfying the input' specifications:
$u$ and $v$ are monic and $\deg_y(u)\ge \deg_y(v)$
(this degree inequality follows from the degree condition~\eqref{eq:deg-cond}
of the output of ``$\LNN$'').
Here  $v\not=0$, as the case $v=0$ is treated in Case 3. By the degree condition~\eqref{eq:deg-cond} of Proposition~\ref{prop:LNL}
we have $\deg_y(u) < \deg_y(a)$ unless
one of the corner cases~\ref{enum:LNN-ii} or~\ref{enum:LNN-iii}  occur.
In  Case~\ref{enum:LNN-ii}, which assumptions are that of Case~\ref{enum:LNN-i},
$b$ is nilpotent, excluded by the input' Specifications.
And in Case~\ref{enum:LNN-iii}, $v=\pm \prem(a,\ u)$ which is nilpotent (maybe $0$ by our convention,
but the case $v=0$ is already treated in Case 3.).
Thus, by Eq.~\eqref{eq:LNN-f3}:
$$
\deg_y(v)\le \deg_y(\prem(a,\ u))<\deg_y(u)\le \deg_y(b)\le \deg_y(a)
$$
Therefore, the input $(u,\ v,\ T_1)$ of the recursive call Line~\ref{SToGB:G1}
displays in any case a degree decrease compared to the input $(a,\ b,\ T)$
of the main call: always hold the following inequalities
$$
\deg_y(u)\le \deg_y(a), \quad \deg_y(v) \le \deg_y(b), \quad \deg_y(T_1)\le \deg_y(T),
$$
and at least one of the two strict inequalities holds
$$
\quad \deg_y(u) < \deg_y(a),\qquad \deg_y(v) < \deg_y(b).
$$
We can assume by induction that this recursive call is correct: $\calG_1$ is a
minimal lexGb of $\l u,\ v,\ T_1\r$.
Moreover, {\em inside} the recursive call made at Line~\ref{SToGB:G1},
the algorithm either went through
Line~\ref{SToGB:ret1} or through Line~\ref{SToGB:iterate}.
If it exists at Line~\ref{SToGB:ret1}, then since $u \ne 1$, this means $v=1$.
The output is $[1]$ and the final output is $\calG_1$ which is a minimal lexGb.
Else, at Line~\ref{SToGB:iterate}, let us  write
the call to ``$\LNN$'' as:
$$
(\, u_0,\ v_0,\ U_0,\ T_0\, ) \leftarrow \LNN(u,\ v,\ T_1),
$$
and let us prove that $\deg_y(u_0)<\deg_y(u)$.
The degree conditions~\eqref{eq:deg-cond} of the proposition~\ref{prop:LNL}
asserts that it is indeed the case, except maybe for
the corner cases~\ref{enum:LNN-ii} or~\ref{enum:LNN-iii}
where possibly $\deg_y(u_0) = \deg_y(u)$ holds.
Both corner cases~\ref{enum:LNN-ii} or~\ref{enum:LNN-iii} imply that $\deg_y(u)=\deg_y(v)$, but this equality does not hold. Indeed $\deg_y(u) = \deg_y(v)$ holds only under Case~\ref{enum:LNN-i} (as output of the call $\LNN(a,\ b,\ T)$ at Line~\ref{SToGB:iterate} this time) where $b$ is assumed nilpotent. But the specifications of Algorithm~\ref{algo:SToGB} ``$\SToGB$'' prescribes $b$ to be nilpotent.
Hence $\deg_y(v)<\deg_y(u)$ and finally $\deg_y(u_0)<\deg_y(u)$.

Therefore, by construction of the lexGb, the polynomial $u$ has a degree (in $y$) larger than all polynomials constructed in $\calG_1$.
We can thus apply Theorem~\ref{th:Laz}-(B) (to $u$ and $\calG_1$) which allows to conclude that $\calG:=[u] \cat [U  g \ :\ g\in \calG_1]$ (as defined at Line~\ref{SToGB:G})
is a {\em minimal} lexGb if and only if $u\in \calG_1$. This is obviously the case, since $\l \calG_1\r=\l u,\ v,\ T_1\r$.

It remains to show that the minimal lexGb $\calG$ generates the ideal $\l a,\ b,\ T\r$ to achieve the proof of Case~4.
Since $\l u,\ v U, \ T\r=\l a,\ b,\ T \r$, it suffices to show that
$\l u,\ v U,\ T\r=\l \calG\r$. From $\l u,\ v, \frac{T}{U}\r=\l \calG_1\r$, we obtain $\l u U ,\ v U,\ T\r = \l U \calG_1\r$.
Hence $\l \calG\r = \l u \r + \l U \calG_1\r = \l u\r + \l u U,\ v U, \ T \r=\l u,\ v U, \ T\r$. 
\end{proof}

\section{Generalizing dynamic evaluation\label{sec:D5}}

\subsection{Splitting  ``invertible/nilpotent''}
\paragraph{Part of a polynomial whose support of irreducible factors are given by another polynomial}
Assume that two monic polynomials $a$ and $b$ are squarefree. 
Then $a/\gcd(a,b)$ and $\gcd(a,b)$ are pairwise coprime.
Moreover $\gcd(a,b) = \prod_{v_p(b)>0} p^{v_p(a)}$, according that $v_p(a)=1$ or $0$.
We need a similar routine when $a$ and $b$ are not squarefree.
As seen in Example~\ref{ex:newD5}, it suffices to iterate a gcd computation until the two factors become coprime.

\begin{Def}\label{def:isol}
Let $\calP$ be the set of irreducible polynomials of $k[x]$. Write $a=\prod_{p\in \calP} p^{v_p(a)}$ the factorization into irreducibles of $a\in k[x]$,
and $b=\prod_{p\in\calP} p^{v_p(b)}$ that of $b\in k[x]$.
The $b$-component of $a$, denoted $a^{(b)}$ is the polynomial  $a^{(b)} := \prod_{p \in \calP ,\ v_p(b)>0} p^{v_p(a)}$. 
\end{Def}

\begin{ex}[Isolating irreducible factors algorithm~\ref{algo:isol} ``$\isol$'']
Let $a=x^3 (x+1)^4 (x+2)$ and $b = x^4 (x+1)$. Then $a^{(b)} = x^3 (x+1)^4$.
\end{ex}
There are several ways to take the $b$-component, we give a standard and easy one.

\begin{algorithm}
\DontPrintSemicolon
\KwIn{1-2. Polynomials $a,b\in k[x]$.}
\KwOut{1-2. The $b$-component $a^{(b)}$ (Definition~\ref{def:isol}), and  $\frac{a}{a^{(b)}}$. They are coprime. 
}

\BlankLine

$a_0 \leftarrow a$~~;~~~ $b_0\leftarrow b$~~;~~~$c_0\leftarrow 1$~~; ~~~$i\leftarrow 0$\;
\Repeat{$b_i=1$}{
$b_{i+1}\leftarrow \gcd(a_i,\ b_i)$~~~;~~~~
 $a_{i+1}\leftarrow \frac{a_i}{b_{i+1}}$~~~;~~~~ $c_{i+1} \leftarrow c_i b_{i+1}$~~~;~~~$i \leftarrow i+1$\nllabel{isol:gcd}\;
}
\Return{$(\, c_i,\ a_i\, )$\tcp*[f]{Write $s+1$ the index $i$ at the exit of the repeat loop}}
\caption{\label{algo:isol} $(\, a^{(b)},\ \frac{a}{a^{(b)}}\, )\leftarrow \isol(a,\ b)$}
\end{algorithm}

\begin{Lem}[Correctness of Algorithm~\ref{algo:isol} ``$\isol$'']
The output $c_{s+1},\ a_{s+1}$ are coprime polynomials that verify $c_{s+1}=c_s,\ a_{s+1}=a_s$, and  $c_s a_s=a$, with 
$c_s=a^{(b)} = \prod_{p\in \calP,\ v_p(a)>0,\ v_p(b)>0} p^{v_p(a)}$. 
\end{Lem}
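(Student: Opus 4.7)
My plan is to establish termination, the main loop invariant, and then identify the valuation of each irreducible factor of $c_s$ and $a_s$.

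First, I would prove by induction that throughout the loop, $c_i \, a_i = a$. The base case $c_0 a_0 = 1 \cdot a = a$ is immediate, and for the inductive step, since $b_{i+1} = \gcd(a_i, b_i)$ divides $a_i$, the division $a_{i+1} = a_i/b_{i+1}$ is exact and $c_{i+1} a_{i+1} = c_i b_{i+1} (a_i/b_{i+1}) = c_i a_i = a$. Combined with $b_{s+1} = 1$, this gives $c_{s+1} = c_s$ and $a_{s+1} = a_s$, and the factorization $c_s a_s = a$.

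Next, I would pass to irreducible factors and track $v_p(a_i)$ and $v_p(b_i)$ for each $p \in \calP$. From the recurrences $v_p(b_{i+1}) = \min(v_p(a_i), v_p(b_i))$ and $v_p(a_{i+1}) = v_p(a_i) - v_p(b_{i+1}) = \max(0, v_p(a_i) - v_p(b_i))$, both sequences are non-increasing in $i$. For termination, it suffices to observe that as long as both $v_p(a_i) > 0$ and $v_p(b_i) > 0$, we have $v_p(b_{i+1}) \ge 1$, so $v_p(a_{i+1}) \le v_p(a_i) - 1$; hence after at most $v_p(a)$ steps one of $v_p(a_i)$ or $v_p(b_i)$ vanishes, and once it does, $v_p(b_j) = 0$ for all later $j$. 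Since only finitely many $p$ have $v_p(b) > 0$, the loop terminates.

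To finish, I would show that $v_p(a_s) = 0$ whenever $v_p(b) > 0$. Fix such a $p$. Let $i \ge 1$ be the smallest index with $v_p(b_i) = 0$ (this exists by termination). Then $v_p(b_{i-1}) > 0$, so the equality $v_p(b_i) = \min(v_p(a_{i-1}), v_p(b_{i-1})) = 0$ forces $v_p(a_{i-1}) = 0$, hence $v_p(a_j) = 0$ for all $j \ge i-1$, and in particular $v_p(a_s) = 0$. Combined with $v_p(c_s) + v_p(a_s) = v_p(a)$, this gives $v_p(c_s) = v_p(a)$. On the other hand, when $v_p(b) = 0$, all $v_p(b_i) = 0$ so $v_p(c_s) = 0$. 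Therefore $c_s = \prod_{p:\ v_p(b) > 0} p^{v_p(a)} = a^{(b)}$, and $\gcd(c_s, a_s) = 1$ since the two factors have disjoint supports.

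The only mildly delicate step is the coprimality conclusion, which hinges on the observation that $v_p(b_i)$ cannot drop to zero before $v_p(a_{i-1})$ does; everything else is a routine induction or valuation bookkeeping.
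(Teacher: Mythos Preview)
Your proof is correct and follows essentially the same approach as the paper: track the $p$-adic valuations $v_p(a_i)$, $v_p(b_i)$, $v_p(c_i)$ through the loop and use the invariant $c_i a_i = a$. Your organization is slightly cleaner than the paper's, since you deduce $v_p(a_s)=0$ directly from the first index where $v_p(b_i)$ vanishes, whereas the paper computes $v_p(c_s)=v_p(a)$ by a longer case analysis on the smallest $j$ with $v_p(a_j)<v_p(b_j)$ and only then infers $v_p(a_s)=0$ from $c_s a_s = a$.
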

\begin{proof}
Let $s+1$ be the last index $i$ at the exit of the repeat  loop,
so that $c_{s+1},\ a_{s+1}$ are the output.
The exit condition of the repeat/until loop is $b_i=1$, hence $b_{s+1}=1$ with the notation we have adopted.
Thus $c_{s+1} = c_s b_{s+1}=c_s$ and $a_{s+1}=a_s/b_{s+1}=a_s$.
We  must show  two things. First, that $v_p(c_s)=v_p(a)$ and $v_p(c_s)>0$ 
is equivalent to $v_p(b)>0$.
Second,  that $a_s =\frac{a}{c_s} $ is coprime with $c_s$.

If we rewrite the update line~\ref{isol:gcd} in terms of $p$-adic valuations, we get:
\begin{equation}\label{eq:val}
\begin{array}{rcl}
v_p(b_{i+1}) & = & \min (v_p(a_i), v_p(b_i))\\
v_p(a_{i+1}) & = & v_p(a_i) - v_p(b_{i+1})\\
v_p(c_{i+1}) & =& v_p(c_i) +  v_p(b_{i+1})
\end{array}
\end{equation}
With the notations of Definition~\ref{def:isol}, at initialization we have $a_0=\prod_{p\in \calP} p^{v_p(a)}$ and
$b_0=\prod_{p\in \calP} p^{v_p(b)}$.
Note that for any irreducible polynomial $p$ such that $v_p(b)>0$, the sequence of integers $(v_p(b_i))_{i=0,\ 1,\ldots}$
strictly decreases to zero, regarding that $b_{s+1}=1$ and thus $v_p(b_{s+1})=0$.

Assume first that there exists an  index $i\ge 0$ such that $v_p(a_i) < v_p(b_i)\ (\star)$, and then take the smallest index  $j$ that verifies this inequality.
Then $v_p(b_{j+1}) = \min(v_p(b_j),\ v_p(a_j)) = v_p(a_j)$ and thus $v_p(a_{j+1}) = v_p(a_j) - v_p(b_{j+1})=0$. 
It follows that $v_p(b_{j+2}) =\min(v_p(b_{j+1}),\ v_p(a_{j+1}))=0$, then $v_p(a_{j+2}) = v_p(a_{j+1})=0$, and
$v_p(c_{j+2}) = v_p(c_{j+1}) $. By induction, we observe that $v_p(b_{j+2}) =\cdots = v_p(b_{s+1}) = 0$,
that $v_p(a_{j+1}) = \cdots = v_p(a_{s+1}) = 0$, and that $v_p(c_{j+1}) = \cdots = v_p(c_{s+1})$.

Besides, since $j$ is the smallest index for which $v_p(a_j)<v_p(b_j)$,
we have that $v_p(b_{\ell}) \le  v_p(a_{\ell})$ for $\ell<j$. Consequently,
$v_p(b_{\ell+1}) = \min(v_p(b_\ell),\ v_p(a_\ell))=v_p(b_\ell)$ and by induction
we observe that  $v_p(b_j) = v_p(b_{j-1}) = \cdots = v_p(b_2)=v_p(b_1)=v_p(b_0)=v_p(b)$.
We obtain:
$$
v_p(c_s) =  j v_p(b)  +  v_p(a_j), \quad \text{and}\quad v_p(a_j) = v_p(a_{j-1}) - v_p(b_j) = v_p(a_{j-1}) - v_p(b).
$$
By induction, $v_p(a_j) = v_p(a_{j-1}) - v_p(b) = v_p(a_{j-2}) - 2 v_p(b) = \cdots = v_p(a_0) - j v_p(b)$.
We get: $v_p(c_s) = j v_p(b)  + v_p(a_0) - j v_p(b) = v_p(a_0)$.
In conclusion, $v_p(c_s) = v_p(a)$ when $v_p(b) >0$ and when there is a $j$ that satisfies $(\star)$.

If there is no such $j$, then the first equality in Eq.~\eqref{eq:val} 
implies that  $v_p(b_{s+1}) = v_p(b_s) = \cdots = v_p(b_0)=v_p(b)$.
Moreover the output $b_{s+1}$ is equal to 1 hence $v_p(b_{s+1})=0$ thereby
$v_p(b)=0$. Additionally, it follows still from Eq.~\eqref{eq:val}
that $v_p(c_0) = v_p(c_1)=\cdots=v_p(c_{s+1})=0$. 
For such an irreducible polynomial $p$,  the definition
of the $b$-component $a^{(b)}=c_{s+1}$ says that $v_p(c_{s+1})$ must be $0$.
This concludes the proof of the output's specification 1. in  any case.

Finally, let us prove that the output $c_s,\ a_s$ are coprime.
From $c_s a_s = a$ and $[v_p(b)>0 \Rightarrow  v_p(c_s)=v_p(a)]$, we deduce that $v_p(a_s)=0$ when $v_p(b)>0$.
When $v_p(b)=0$ then $v_p(c_s) =0$, and thus $v_p(a_s)=v_p(a)$.
It follows that, either $(v_p(a_s)=0$ and $v_p(c_s)=v_p(a))$ or
$(v_p(a_s)=v_p(a)$ and $v_p(c_s)=0)$. This means that $c_s$ and $a_s$ are coprime.
\end{proof}

\begin{Rmk}\label{rmk:isol}
 $b$  is nilpotent modulo $a^{(b)}$, and $b$ is invertible modulo $\frac{a}{a^{(b)}}$.
Indeed all irreducible factors of $a^{(b)}$ are irreducible factors of $b$, hence $\sqfp(a^{(b)}) | \sqfp(b)$.
And $\frac{a}{a^{(b)}}$ does not have any common irreducible factors with $b$.
\end{Rmk}

\paragraph{The splitting ``invertible/nilpotent''}
The algorithm~\ref{algo:invNil} ``$\invNil$'' is the cornerstone for generalizing the dynamic evaluation
from a modulus $T$ that is a squarefree
 polynomial to a general modulus.
The splitting that it induces is ``invertible/nilpotent'' instead of ``invertible/zero''  in the standard dynamic evaluation.
It is similar to ``$\isol$'' algorithm except that the inverse of $b$ modulo $\frac{a}{a^{(b)}}$ is also
returned.

\begin{algorithm}[H]
\DontPrintSemicolon
\KwIn{1. Non-zero polynomial $a\in k[x]$\\
2. Non constant monic polynomial $T \in k[x]$}
\KwOut{1. $[f_1,\ T_1]$: monic polynomial $T_1$ and   $f_1$  such that  $f_1 f \equiv 1 \bmod T_1$ if $T_1\not=1$, or $f_1=0$ if $T_1=1$,\\
  2.$[f_2,\ T_2]$: $T_2$ monic polynomial, $f_2$ nilpotent mod $T_2$, and $f_2 \equiv a\bmod T_2$.

  $\star$ Condition: $T=T_1 T_2$, $T_1$ and $T_2$ coprime.
}
\BlankLine

$(\, T_2,\ T_1\, ) \leftarrow \isol(T,\ f)$\;

$f_2 \leftarrow f \bmod T_2$~~;~~~ $f_1\equiv f^{-1} \bmod T_1$\;

\Return{$[[f_1,\ T_1],[f_2, T_2]]$}
\caption{\label{algo:invNil} \quad $[[f_1,\ T_1],[f_2,\ T_2]]\leftarrow \invNil(a,\ T)$}
\end{algorithm}

\begin{Prop}
  The algorithm~\ref{algo:invNil} ``$\invNil$'' splits the polynomial $T$ into two coprime polynomials $T_1$ and $T_2$,
  and outputs two  polynomials $f_1$ and $f_2\in k[x]$ such that:
  $f_1$ is invertible modulo $T_1$ and $f_2$ is nilpotent (maybe zero according to our convention) modulo $T_2$. Moreover,
  $f_1 a \equiv 1 \bmod T_1$, and $a \equiv f_2 \bmod T_2$.
\end{Prop}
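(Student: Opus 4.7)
The plan is to deduce the statement directly from the correctness lemma just established for $\isol$ (Algorithm~\ref{algo:isol}) together with Remark~\ref{rmk:isol}; essentially no new work is required, only an unpacking of what the subroutine call delivers.

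First, I would invoke the correctness of $\isol(T,\,a)$: the returned pair $(T_2,\,T_1)$ satisfies $T_2 = T^{(a)}$ and $T_1 = T/T^{(a)}$, so $T_1 T_2 = T$ and $T_1,\,T_2$ are coprime. This already gives the condition $T = T_1 T_2$ with $T_1,\,T_2$ coprime required in the specifications.

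Next, I would apply Remark~\ref{rmk:isol}, which provides the two key dichotomy statements: $a$ is nilpotent modulo $T^{(a)} = T_2$, and $a$ is invertible modulo $T/T^{(a)} = T_1$. From the first, $f_2 := a \bmod T_2$ is nilpotent modulo $T_2$ and trivially $a \equiv f_2 \bmod T_2$; this matches output specification 2. From the second, the inverse $f_1 \equiv a^{-1} \bmod T_1$ is well-defined (one can compute it by the extended Euclidean algorithm between $a$ and $T_1$), and by construction $f_1 a \equiv 1 \bmod T_1$; this matches output specification 1 (with the boundary case $T_1 = 1$ handled trivially by setting $f_1 = 0$, since the invertibility condition is vacuous modulo $1$).

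There is essentially no obstacle here: once the correctness of $\isol$ has been established, the routine $\invNil$ is merely packaging its output together with one extended-gcd inversion, so the proof reduces to a direct verification of the four output conditions. The only mild subtlety to mention is the degenerate case $T_1 = 1$, where no inversion can or needs to be performed, and the convention $f_1 = 0$ is adopted to keep the output format uniform.
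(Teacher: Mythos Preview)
Your proof is correct and follows exactly the same approach as the paper: invoke the correctness of $\isol$ to obtain the coprime splitting $T = T_1 T_2$, then apply Remark~\ref{rmk:isol} to conclude that $a$ is invertible modulo $T_1$ and nilpotent modulo $T_2$. You have in fact been slightly more thorough than the paper by explicitly addressing the degenerate case $T_1 = 1$ and spelling out the verification of $f_1 a \equiv 1 \bmod T_1$ and $a \equiv f_2 \bmod T_2$.
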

\begin{Rmk}
The polynomials $T_1, \ T_2, \ f_1, \ f_2$ are uniquely determined by $a$ and $T$.
\end{Rmk}

\begin{proof}
The specifications of Algorithm~\ref{algo:isol} ``$\isol$'' implies that $T_1 T_2 =T$ and that $T_1$ is coprime with $T_2$.
Remark~\ref{rmk:isol} implies that $a$ is nilpotent modulo $T_2$ and invertible modulo $T_1$.
\end{proof}

\begin{ex}[Algorithm~\ref{algo:invNil} ``$\invNil$''] 
Let $p,\ q\in k[x]$ be two monic and distinct irreducible polynomials of $k[x]$,
$T = p^2 q^2$. Let $a := p a'$ with $a'\in k[x]$ coprime with $p$ and with $q$.
Then $a$ is invertible modulo $q^2$, and $a$ is   nilpotent modulo $p^2$.
Then $[[( p a' \bmod q^2)^{-1},\ q^2],\ [p a' \bmod p^2 ,\ p^2]] \leftarrow \invNil(a,\ T)$.

With $T = p^2 q^2$ as above and $a = p q$, then $[[0,\ 1 ],\ [ a,\ T]] \leftarrow \invNil(a,\ T)$.

Still with $T = p^2 q^2$, and $a$ invertible modulo $T$ then $[[(a \bmod T)^{-1} ,\ T],\ [0,\ 1]] \leftarrow \invNil(a,\ T)$.
\label{ex:invNil}\end{ex}

\subsection{Monic form according to dynamic evaluation}
The next fundamental operation  makes polynomials monic.
In the case of polynomials over a field, it suffices to invert the leading coefficient.
Modulo a squarefree polynomial $T$, it requires classic dynamic evaluation to handle potential zero-divisors:
start with the leading coefficient, and if there is a zero branch,
carry on with the next coefficient etc.
until there is no zero branch.

The situation is more subtle when $T$ is not squarefree.
This stems from the necessity to perform a Weierstrass factorization to
make a polynomial monic.

\paragraph{Overview of Algorithm~\ref{algo:WFD5} ``$\WFD5$''}
It investigates all the coefficients of $f$ by decreasing degree
through recursive calls,
by {\em splitting} into an invertible branch, and a nilpotent one.
It returns the inverse of the coefficient in the invertible branches,
and continues with the next coefficient in the  nilpotent one.
That the branches do not overlap is due to the specification $\star$ in Algorithm~\ref{algo:invNil},
namely the underlying polynomials $T_1$ and $T_2$ obtained in the splitting
are  coprime.
The output is a compilation of the results obtained at the endpoints of all the branches.
See Example~\ref{ex:WFD5}.

The purpose of the algorithm~\ref{algo:WFD5} ``$\WFD5$'' is to provide the correct input for ``$\MQ$'' algorithm,
like its local counterpart, Algorithm~\ref{algo:WF} ``$\WF$''.
It is a recursive algorithm, the main call being $\WF(f,\ T,\ \deg_y(f),\ T)$.
Recursive calls take as third input an integer smaller than $\deg_y(f)$,
and a polynomial $N$ that divides $T$ as the fourth input.
The lemma below addresses the validity of the algorithm throughout recursive calls,
and Corollary~\ref{cor:SToGB} finalizes the proof of correctness of Algorithm~\ref{algo:SToGB}.

\begin{Lem}\label{lem:induc}
Assume that the algorithm is correct for a fixed polynomial $f = c_0(x)+c_1(x) y+ \cdots+ c_\delta(x) y^\delta$,
and for any  polynomial $T$,  and integers $-1\le d \le d_0$ for a fixed integer $-1\le d_0< \delta$,
and a polynomial $N$, all satisfying the input' specifications.
Then the algorithm is correct with input $f$, any $T$  and $N$ satisfying the input' specifications,
for the integer $d = d_0+1$,
\end{Lem}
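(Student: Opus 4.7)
The plan is to proceed by induction on $d$, with the induction hypothesis in the statement providing everything needed for the inductive step. So fix $f$, $T$ and $N$ satisfying the input specifications with $d = d_0+1$, and unfold one step of $\WFD5(f,\ T,\ d_0+1,\ N)$.

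First I would argue that the algorithm begins by selecting the coefficient $c_{d_0+1}(x)$ of $y^{d_0+1}$ in $f$ and calling $\invNil(c_{d_0+1},\ T)$. By the specification of Algorithm~\ref{algo:invNil} this returns a factorization $T = T_1 T_2$ with $T_1$ and $T_2$ coprime, together with $f_1$ satisfying $f_1 c_{d_0+1} \equiv 1 \bmod T_1$ and $f_2 \equiv c_{d_0+1} \bmod T_2$ with $f_2$ nilpotent modulo $T_2$. Two branches then need to be handled.

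On the \emph{invertible branch} (modulus $T_1$), the coefficient $c_{d_0+1}$ is invertible modulo $T_1$ with inverse $f_1$. Here the algorithm directly emits a triple of the form $(N',\ d_0+1,\ f_1)$, where $N'$ is the gcd of $T_1$ with the tracking input $N$ (so that $N'$ still captures the product of nilpotent coefficients of degree exceeding $d_0+1$, restricted to the current factor $T_1$). This is precisely the shape of output required to feed $\MQ$, matching the local specification of $\WF$. On the \emph{nilpotent branch} (modulus $T_2$), the algorithm updates $N$ by multiplying in a suitable power capturing the nilpotence of $c_{d_0+1}$ modulo $T_2$, then makes a recursive call with the smaller integer $d_0$; since $d_0 < d_0+1$, the induction hypothesis applies and the output of this recursive call satisfies the specification. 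The final output is the concatenation of the invertible-branch record with the output of the nilpotent-branch recursive call. Coprimality of $T_1$ and $T_2$ (the condition $\star$ of Algorithm~\ref{algo:invNil}) guarantees that the branches do not overlap, so the moduli appearing in the combined output have product exactly $T$.

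The main obstacle is the bookkeeping of the parameter $N$ across the recursion: one must verify the invariant that at every recursion depth, $N$ equals the largest divisor of the current $T$-factor that divides each of the coefficients $c_i$ with $i > d_0+1$ already processed. This is exactly what is required to match specification~1 of the output (the $g$ returned is $N'$), and what then entitles the caller to feed the resulting triple into $\MQ$. Once this invariant is established --- it is preserved by the two update rules described above --- the correctness of the compiled output follows branch-by-branch from the base case (when the invertible branch fires, the specification is met directly) and from the induction hypothesis (when the nilpotent branch fires, correctness is inherited from the recursive call).
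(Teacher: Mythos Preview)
Your approach is essentially the same as the paper's: unfold one execution step of $\WFD5$ at $d = d_0+1$ and invoke the induction hypothesis on the recursive calls at level $d_0$. The paper organizes this into four explicit cases (a)--(d) according to whether $c_{d_0+1}$ is zero, invertible modulo $T$, nilpotent modulo $T$, or mixed; you fold the last three into the single call to $\invNil$ with invertible/nilpotent branches, which is fine since cases (b) and (c) are the degenerate situations $T_2=1$ and $T_1=1$.

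Two concrete gaps. First, you omit the case $c_{d_0+1} = 0$ (the paper's Case~(a), Line~\ref{WFD5:0} of Algorithm~\ref{algo:WFD5}): there $\invNil$ is not called at all and the algorithm recurses directly with $N$ unchanged. Second, your description of the $N$ update on the nilpotent branch is wrong: $N$ is updated by a \emph{gcd}, not by ``multiplying in a suitable power''. The invariant you correctly identify as central is $N = \gcd(T,\ c_{d+1},\ldots,\ c_\delta)$, so passing from level $d_0+1$ to level $d_0$ requires replacing $N$ by $\gcd(N,\ c_{d_0+1})$ (respectively $\gcd(N,\ c_{d_0+1},\ T_2)$ on the $T_2$-branch, Lines~\ref{WFD5:N2} and~\ref{WFD5:recT2}). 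Both gaps are easy to repair, but the second one matters: a multiplicative update would not preserve the invariant, and the invariant is exactly what feeds the correct $N_i$ into $\MQ$.
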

\begin{proof}
Let $c_d$ be the coefficient of degree $d$ in $f$ (Line~\ref{WFD5:lc}).
There are four cases to investigate: 

(a) $c_d =0$, return at Line~\ref{WFD5:0}.

(b) $c_d\not=0$, $c_d$ is invertible modulo $T$, that is $T_2=1$. The return occurs at Line~\ref{WFD5:T1}

(c) $c_d\not=0$, $c_d$ is  nilpotent, that is $T_2 = T$ and $T_1 = 1$. The algorithm ends at Line~\ref{WFD5:recT2}.

(d) $c_d\not=0$, $c_d$ has an invertible part and a  nilpotent part,
that is $T_1\not=1$, $T_2\not=1$. Return occurs at Line~\ref{WFD5:T1T2}.

{\em Case (a)}. The algorithm goes directly to Line~\ref{WFD5:0} where  a recursive call is done with input $d-1$ (instead of $d$ in the main call),
hence is correct by assumption: the output $[[f_i,\ T_i,\ d_i,\ N_i]_i]$ verifies the 
specifications. Since $f$ has coefficient of degree $d$ $\ c_d=0$ in this case,
the output is the same with input $d$. Moreover these specifications are unchanged with $d$ or $d-1$.

{\em Case (b)}. The coefficient $c_d$ is invertible modulo $T$. There is no splitting at Line~\ref{WFD5:invNil}.
The output $[[f,\ T,\ d,\ a_1,\ N]]$ at Line~\ref{WFD5:T1}  verifies the specification: since $T_1=T$, $a_1$ is the inverse of the coefficient $c_d$
modulo $T$, $N$
is the gcd with $T$ of the coefficients of degree larger than $d$ by assumption,
and $\sqfp(T)=\sqfp(N)$.

{\em Case (c)}. The coefficient $c_d$ is  nilpotent hence $T_2=T$.
The recursive call made at Line~\ref{WFD5:recT2} outputs
$[[f_i,\ T_i,\ d_i,\ N_i]_i]$ which verifies the specifications with
input $f,\ T,\ d-1,\ \gcd(N,\ c_d) $. 
One easily verifies that this output also complies with the specifications corresponding  to the input $f,\ T, \ d, \ N$.

{\em Case (d)}. There is a splitting into an invertible  branch $[a_1,\ T_1]$
and a  nilpotent one $[a_2,\  T_2]$ (Line~\ref{WFD5:invNil}). In the invertible branch,
the algorithm returns an output similar to Case (b). In the non-invertible
branch, a recursive call  is performed and reduces to Case (c).
The final step  merges these output, which all-in-all
verify the required specifications.
\end{proof}

\begin{Cor}\label{cor:SToGB}
  Algorithm~\ref{algo:WFD5} ``$\WFD5$'' is correct.
  \end{Cor}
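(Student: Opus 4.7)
The plan is a one-step induction on the integer $d$ passed as third argument to $\WFD5$, the inductive step being supplied by Lemma~\ref{lem:induc}. Since the main invocation is $\WFD5(f,\ T,\ \deg_y(f),\ T)$, it is enough to establish correctness for every integer $-1 \le d \le \delta$, where $\delta := \deg_y(f)$.

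First I would settle the base case $d = -1$. At this point every coefficient of $f$ has been examined along the current branch without producing an invertible one, so $f$ is nilpotent modulo the current modulus, and the algorithm must return the ``fully nilpotent'' outcome (analogous to the exit at Line~\ref{WF:ret2} of the local routine $\WF$). The specifications are met vacuously on the invertible side, while the conditions imposed on $N$ (namely $N \mid T$, $\sqfp(N) = \sqfp(T)$, and $N$ equal to the gcd of $T$ with the already-scanned coefficients) are preserved as invariants of the recursion and thus hold at the base case. The inductive step from $-1 \le d \le d_0$ to $d = d_0 + 1$ is exactly Lemma~\ref{lem:induc}: its four-way case analysis on the outcome of $\invNil(c_d,\ T)$ at Line~\ref{WFD5:invNil} terminates each branch either with an output meeting the specifications directly, or via a recursive call with strictly smaller third argument, to which the inductive hypothesis applies.

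The delicate point is case~(d), where $c_d$ is neither invertible nor nilpotent modulo $T$ and the algorithm splits $T = T_1 T_2$, treating both sides independently before concatenating the resulting lists of tuples. What legitimates this concatenation is the coprimality guarantee $\star$ of Algorithm~\ref{algo:invNil} together with Remark~\ref{rmk:isol}: the moduli $T_1$ and $T_2$ are coprime with $T_1 T_2 = T$, and $c_d$ is invertible modulo $T_1$ and nilpotent modulo $T_2$, so the outputs produced on the two branches live over disjoint factors of $T$ and form a genuine direct decomposition of the original problem. Once this is observed the induction closes, and applied to the main call $\WFD5(f,\ T,\ \deg_y(f),\ T)$ it yields the corollary.
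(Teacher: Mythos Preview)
Your proposal is correct and follows essentially the same approach as the paper: induction on the third argument $d$ from the base case $d=-1$ up to $\deg_y(f)$, with Lemma~\ref{lem:induc} supplying the inductive step. Your additional paragraph on case~(d) merely re-explains part of what Lemma~\ref{lem:induc} already covers, so it is redundant but harmless.
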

\begin{proof}
  The main call is made with $f,\ T,\ \deg_y(f),\ T$.
  The proof proceeds  by induction on $d_0=-1,\ldots, \deg_y(f)$.
 The base case corresponds to  $d_0=-1$ and more precisely to:
 $$
 \WFD5(f,\ T,\ -1, \ \gcd(T,\ {\rm content}(f))).
 $$
 The output at Line~\ref{WFD5:outNil} is then $[f,\ T,\ -1,\ 0,\ \gcd(T,\ {\rm content}(f))]$
 and satisfies the required specifications.

 The induction hypothesis assumes that the algorithm is correct for a fixed integer $-1<d_0 <\deg_y(f)$:
 with input $\WFD5(f,\ T,\ d,\ N)$ where $N=\gcd(T,\ c_{d+1},\ldots, c_\delta)$, $\sqfp(N)=\sqfp(T)$, and $d$ is any integer $d=-1,\ldots, d_0 < \deg_y(f)$, the algorithm is correct.
 Then Lemma~\ref{lem:induc} shows that the algorithm is correct for input $\WFD5(f,\ T,\ d,\ N)$ where $d\le d_0 +1$ achieving the
 proof by induction.\end{proof}

\begin{algorithm}[H]
\DontPrintSemicolon
\KwIn{1. polynomial $f\in k[x,y]$, \ \   $f=c_0(x) + c_1(x) y + \cdots + c_\delta (x) y^\delta$  \\
2. a non constant monic polynomial $T \in k[x]$,\\
3. an integer $d\ge -1$,\\
4. if $d< \delta$ then  $N=\gcd(T,\ c_{d+1},\ldots \ ,\ c_\delta)$.
It must also verify $\sqfp(N)=\sqfp(T)$, that is $c_{d+1},\ldots, c_\delta$  are nilpotent modulo $T$.
And $N=T$ if $d \ge \delta$.}

\KwOut{ 1. $f_i \equiv f \bmod T_i$\\
2. $\prod_i T_i =T$ and the $(T_i)$s are pairwise coprime.\\
3. $d_i\le \delta$ is the largest integer such that $c_{d_i} = \coeff(f_i,\ d_i)$ is invertible modulo $T_i$.
If such an integer does not exist then $d_i=-1$.\\
4. $a_i \equiv \coeff(f_i,d_i)^{-1} \bmod T_i$ if $d_i\ge 0$   and $a_i=0$
if $d_i=-1$.\\
5. $N_i = \gcd(T_i,\ \ c_{d_i+1},\ldots \ ,\ c_\delta)$ is the gcd with $T_i$ of all the coefficients of $f_i$ at a degree 
larger than $d_i$. In particular, 
 $\deg_y(f_i \bmod N_i) = d_i$.
Moreover, $c_{d_i+1},\ldots, c_\delta$ are nilpotent modulo $T_i$, that is $\sqfp(N_i)=\sqfp(T_i)$.
}
\BlankLine

\If(\tcp*[f]{Base case of the recursive calls}){$d = -1$}{
\Return{$[[f,\ T, \ -1,\ 0, \ N]]$\nllabel{WFD5:outNil}\;}
}
$c_d \leftarrow \coeff(f,  d)$\nllabel{WFD5:lc}\;

\eIf{$c_d \not=0$}{
  $[[a_1,\ T_1],\ [a_2,\ T_2]] \leftarrow \invNil(c_d,\ T)$\nllabel{WFD5:invNil}\tcp*[r]{$a_1 c_d \equiv 1 \bmod T_1,\
a_2\equiv c_d \bmod T_2$}
  \eIf{$T_2\not=1$}{
       \eIf(\tcp*[f]{Case (d)}){$T_1\not=1$}
           {  $N_2 \leftarrow \gcd(a_2,\ N,\ T_2)$ \nllabel{WFD5:N2}\tcp*[r]{Now $N_2=\gcd(T_2,\ c_d,\ c_{d+1},\ \ldots, \ c_\delta)$}
             $f_1 \leftarrow f\bmod T_1~~;~~~ f_2 \leftarrow f\bmod T_2$\;
             $N_1 \leftarrow \gcd(N,\ T_1)$\nllabel{WFD5:N1}\tcp*[r]{Now $N_1=\gcd(T_1,\ c_d,\ c_{d+1},\ \ldots, \ c_\delta)$}
             \Return{$[[f_1,\ T_1,\ d, \ a_1, \ N_1]] \cat \WFD5(f_2,\ T_2,\ d-1,\ N_2)$}\nllabel{WFD5:T1T2}}
           (\tcp*[f]{$c_d$ is nilpotent (Case (c))})
           {    $N \leftarrow \gcd(c_d,\ N)$ \tcp*[r]{Now $N=\gcd(T,\ c_d,\ c_{d+1},\ \ldots, \ c_\delta)$}
             \Return{$ \WFD5(f,\ T,\ d-1,\ N)$\nllabel{WFD5:recT2}\tcp*[r]{$T_2=T$}}
           }
  }(\tcp*[f]{$c_d$ is invertible (Case (b))})
      {\Return{$[[f,\ T,\ d, \ a_1,\ N]]$\nllabel{WFD5:T1}\tcp*[r]{$T_1=T$}}}
}(\tcp*[f]{Case (a)})
    {\Return{$\WFD5(f,\ T,\ d-1,\ N)$}\nllabel{WFD5:0}}
\caption{\label{algo:WFD5} \quad $[[f_i,\ T_i,\ d_i,\ a_i,\ N_i]_i] \leftarrow \WFD5(f,\ T,\ d,\ N)$}
\end{algorithm}

\begin{ex}[Algorithm~\ref{algo:WFD5} ``$\WFD5$'']\label{ex:WFD5} Consider the polynomial
  $f=x(x+1) y^2 + 2 x y - x-1$ and $T = x^2(x+1)^2$. The main call is
  $$
  \WFD5(f,\ T,\ 2,\ T)\quad \text{the third input is $2$ because $f$ is of degree $2$}
  $$
  First coefficient, that  of $y^2$: $c_2(x)=x (x + 1)$. We have 
$$[\, [ 0,\ 1],\ [ x(x+1),\ x^2(x+1)^2]\, ] \leftarrow \invNil(c_2,\ T)$$
  since $c_2$ is nilpotent modulo $T$. Thus $T_1=1$, $T_2=T$ and $N=\gcd(T,\ c_2) = x(x+1)$.

  Recursive call at Line~\ref{WFD5:recT2} is : $\WFD5(f,\ x^2 (x+1)^2,\ 1,\ x (x+1))$.

  Second coefficient, that of $y$ is: $c_1(x) = 2 x$. 
$$[ [- \frac 12 x -1   ,\ (x+1)^2],\ [ 2 x ,\ x^2]] \leftarrow \invNil(2 x,\ x^2 (x+1)^2),$$
    thus $T_1=(x+1)^2$ and $T_2=x^2$. Thus at Line~\ref{WFD5:N2}-\ref{WFD5:N1} $N_1 = \gcd(x(x+1),\ (x+1)^2) = x+1$ and $N_2 = \gcd(x(x+1), x^2,\ 2 x) = x$.
    Next consider Line~\ref{WFD5:T1T2}. We obtain 
$$\out=[[ (- 1 - x ) y^2  + 2 x y - x -1 \ ,\ \  (x+1)^2\ ,\ \ 1\ ,\ \ -1 - \frac 1 2 x\ ,\ \ x+1]]$$
    that is supplemented with the recursive call $\WFD5(x y^2 + 2 x y - x -1 ,\ x^2,\ 0,\ x)$.

    Third coefficient is $c_0 \equiv -x -1 \bmod x^2$.
    $[ [ -1 +x,\ x^2],\ [ 0,\ 1]] \leftarrow \invNil( - x -1,\ x^2) $,
    thus $T_1=x^2$ and $T_2=1$.  The return occurs at Line~\ref{WFD5:T1} 
    with $[x y^2 + 2 x y - x -1 \ ,\ \ x^2\ ,\ \ 0\ ,\ \ -1+x\ ,\ \ x]$.
    Finally, $\out=[[f_1,\ T_1,\ d_1,\ a_1,\ N_1],\ [f_2,\ T_2,\ d_2,\ a_2,\ N_2]]$ where:
    $$
    \begin{array}{ccc}
      \begin{array}{rcl}
        f_1 & =& (- 1 - x ) y^2  + 2 x y - x -1\\
        T_1 & =& (x+1)^2\\
        d_1 & =& 1\\
        a_1  & = & \ -1 - \frac 1 2 x\\
        N_1  & = & x+1
        \end{array}
      & \qquad &
      \begin{array}{rcl}
         f_2 & =& x y^2 + 2 x y - x -1 \\
         T_2 & =&  x^2\\
         d_2  &  = & 0\\
         a_2 & = & -1+x\\
         N_2  & =& x
      \end{array}
    \end{array}
    $$
    We have indeed $T=T_1 T_2$, $d_1=1$, and $d_2=0$.  Also, for $i=1,2$, $\deg_y(f_i \bmod N_i)=d_i$,
    and $\sqfp(N_i)=\sqfp(T_i)$.
 Moreover, $a_i \coeff(f_i,\ d_i) \equiv 1 \bmod T_i$.  So that all specifications are satisfied.\hfill \qed
  \end{ex}

\paragraph{Overview of Algorithm~\ref{algo:MFD5} ``$\MFD5$''}
Again it translates the algorithm~\ref{algo:MF} ``$\MF$'' to dynamic evaluation.
Among the two subroutines involved, only $\WFD5$ creates splittings,
indeed $\MQ$ does not perform divisions. Note that the description of ``$\MQ$'' in Algorithm~\ref{algo:MQ}
does not assume $T$ to be the power of an irreducible polynomial. There is no need to adapt it to dynamic evaluation.

After a first call to $\WFD5$ at Line~\ref{MFD5:WF1},
$\bfW$ is a collection of data like the output of the local version $\WF$.
For each component $W=[f_i,\ T_i,\ d_i,\ \alpha_i,\ N_i]$ of $\bfW$
two cases must be distinguished: if $d_i=-1$ then $f_i$ is nilpotent and $N_i | f_i$,
or $d_i\ge 0$. In the former, a second call to $\WFD5$ (Line~\ref{MFD5:WF2})
is performed after dividing the input by $N_i$. 
This second call creates subbranches requiring to ``project'' some polynomials on
these new factors (role of the call to $\isol$ at Line~\ref{MFD5:isol}).

\begin{algorithm}[h]
\DontPrintSemicolon
\KwIn{1. polynomial $a\in k[x,y]$\\
  2. monic non constant polynomial $T\in k[x]$
}
\KwOut{
  1. $b_i \in k[x,y]$  is monic (in $y$) \\
  2. monic polynomials $T_{i}\in k[x]$, pairwise coprime dividing $T$.\\
  3. monic polynomials $U_i \in k[x]$, pairwise coprime dividing  $T$. Furthermore:
  \begin{multicols}{2}
    \begin{enumerate}[(a)]
 \item\label{MFD5:a} $ \prod\nolimits_{i} U_i T_{i} = T $ and,
 \item\label{MFD5:b} if $U_i\not=1$, $\sqfp(U_i)=\sqfp(T_{i})$ 
 \item\label{MFD5:c} $U_i$ divides $a$ modulo $T_{i}$
 \item\label{MFD5:d} $\prod\nolimits_i \l U_i   b_{i},\ U_i T_{i}\r \simeq \l a,\ T\r$.
    \end{enumerate}
    \end{multicols}
}
\BlankLine
 $\out\leftarrow []$\;

${\bf W} \leftarrow \WFD5(a,\ T,\ \deg_y(a),\ T)$\nllabel{MFD5:WF1}\;

 \For(\tcp*[f]{write $W=[f_i,\ T_i,\ d_i,\ \alpha_i,\ N_i]$}){$W \in {\bf W}$}{

   \eIf(\tcp*[f]{$f_i$ is nilpotent and $N_i|T$, $N_i|f_i$}){$d_i=-1$\nllabel{MFD5:di=-1}}{
     
     $\bfZ \leftarrow \WFD5(f_i/N_i,\ T_i/N_i, \ \deg_y(f_i),\ T_i/N_i)$\nllabel{MFD5:WF2}

     $U'_{i0} \leftarrow N_i$\nllabel{MFD5:U0}\;
     
     \For(\tcp*[f]{write $Z=[f_{ij},\ T_{ij},\ d_{ij},\ \alpha_{ij},\ N_{ij}]$}){$Z\in \bfZ$}{
       
           $b_{ij} \leftarrow \MQ(f_{ij},\ T_{ij}, \ d_{ij},\ \alpha_{ij}, \ N_{ij})$\nllabel{MFD5:MQ1}\tcp*[r]{$f_{ij}$ is not nilpotent, $d_{ij}\ge0$}
       
           $(\, U_{ij},\ U'_{ij} \, )\leftarrow \isol(U'_{i j-1},\ T_{ij})$\nllabel{MFD5:isol}\tcp*[r]{$U_{ij} U'_{ij}=U'_{ij-1}$, $ U_{ij} = N_i^{(T_{ij})}$}

       $\out\leftarrow \out  \cat [\ [b_{ij},\ T_{ij},\ U_{ij}]\ ]$ \nllabel{MFD5:out1}\;
     }
   }(\tcp*[f]{$f_i$ is not nilpotent}\nllabel{MFD5:else})
   {
     
     $b_i \leftarrow \MQ(f_i,\ T_i, \ d_i,\ \alpha_i, \ N_i)$\nllabel{MFD5:MQ2}\;

   $\out\leftarrow \out  \cat [\ [b_i, \ T_i,\ 1]\ ] $ \nllabel{MFD5:out2} \;
   }
 }

 \Return{$ \out$}\;

\caption{\label{algo:MFD5} \quad $ [ [b_i,\ T_i,\ U_i]_i ] \leftarrow \MFD5(a,\ T)$}
\end{algorithm}

\begin{Prop}[Correctness of Algorithm~\ref{algo:MFD5} ``$\MFD5$''] \label{prop:MFD5}
 Specifications~\ref{MFD5:a},~\ref{MFD5:b},~\ref{MFD5:c} and~\ref{MFD5:d}
  are satisfied by the output $\out$ of the algorithm.
\end{Prop}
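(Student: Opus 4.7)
The plan is to verify each of the four specifications (\ref{MFD5:a})--(\ref{MFD5:d}) by distinguishing the two branches of the main \texttt{for} loop: the non-nilpotent branch (Line~\ref{MFD5:else}, where $d_i \ge 0$, $U_i = 1$) and the nilpotent branch (Lines~\ref{MFD5:di=-1}--\ref{MFD5:out1}, where $d_i = -1$ triggers a second call to $\WFD5$ and a cascade of $\isol$'s). The ingredients are the correctness statements of the three subroutines: $\WFD5$ (Corollary~\ref{cor:SToGB}), $\MQ$ (whose output $b$ satisfies $\l b,\ T'\r = \l f',\ T'\r$ with $b$ monic), and $\isol$.

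The non-nilpotent branch is immediate: $\MQ(f_i,\ T_i,\ d_i,\ \alpha_i,\ N_i)$ returns a monic $b_i$ with $\l b_i,\ T_i\r = \l f_i,\ T_i\r = \l a,\ T_i\r$ (since $f_i \equiv a \bmod T_i$), and the contributions to (\ref{MFD5:a})--(\ref{MFD5:d}) follow trivially with $U_i = 1$. For the nilpotent branch, the key combinatorial observation is that iterating $(U_{ij},\ U'_{ij}) \leftarrow \isol(U'_{i,j-1},\ T_{ij})$ from $U'_{i0} = N_i$ peels off the $T_{ij}$-component of $N_i$ at each step. Because the $T_{ij}$ are pairwise coprime with $\prod_j T_{ij} = T_i/N_i$ (inner $\WFD5$) and $\sqfp(N_i) = \sqfp(T_i)$, a short induction yields $\prod_j U_{ij} = N_i$, $\sqfp(U_{ij}) = \sqfp(T_{ij})$ whenever $U_{ij} \ne 1$, and $N_i/U_{ij}$ is coprime with $T_{ij}$. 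Combined with $\prod_i T_i = T$ from the outer $\WFD5$, this yields (\ref{MFD5:a}) and (\ref{MFD5:b}) at once. For (\ref{MFD5:c}), the $d_i = -1$ clause of $\WFD5$ guarantees that $N_i$ divides every coefficient of $f_i$, hence $N_i \mid f_i$ in $k[x,y]$, and therefore $U_{ij} \mid N_i \mid f_i \equiv a \bmod T_{ij}$.

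Specification (\ref{MFD5:d}) is the main task. By (\ref{MFD5:a}) and the Chinese remainder theorem,
\[
\l a,\ T\r \ =\ \bigcap_{\bullet}\, \l a,\ U_\bullet T_\bullet\r \ =\ \prod_\bullet \l a,\ U_\bullet T_\bullet\r,
\]
so it suffices to prove $\l a,\ U_\bullet T_\bullet\r = \l U_\bullet b_\bullet,\ U_\bullet T_\bullet\r$ component by component. The non-nilpotent case follows directly from $\MQ$. I expect the main obstacle to be the nilpotent case. There, since $U_{ij} T_{ij} \mid T_i$, one first replaces $a$ by $f_i$. Using $f_i/N_i \equiv f_{ij} \bmod T_{ij}$ (from the inner $\WFD5$) together with $N_i = (N_i/U_{ij})\, U_{ij}$ gives
\[
f_i \ \equiv\ (N_i/U_{ij})\, U_{ij}\, f_{ij} \quad\text{modulo}\quad U_{ij} T_{ij}.
\]
Combined with $\l f_{ij},\ T_{ij}\r = \l b_{ij},\ T_{ij}\r$ from $\MQ$, this immediately yields $f_i \in \l U_{ij} b_{ij},\ U_{ij} T_{ij}\r$. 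The reverse inclusion rests on the crucial fact that $N_i/U_{ij}$ is invertible modulo $T_{ij}$: multiplying the previous congruence by an inverse $v$ of $N_i/U_{ij}$ produces $U_{ij} f_{ij} \equiv v\, f_i \bmod U_{ij} T_{ij}$, and hence $U_{ij} b_{ij} \in \l f_i,\ U_{ij} T_{ij}\r$. Gluing these local equalities through CRT concludes the proof.
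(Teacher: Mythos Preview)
Your proof is correct, and for specifications~\ref{MFD5:a}, \ref{MFD5:b}, \ref{MFD5:c} it follows essentially the same lines as the paper. For specification~\ref{MFD5:d}, however, you take a genuinely different route. The paper argues \emph{globally}: it first establishes $\prod_j \l b_{ij},\ T_{ij}\r \simeq \l f_i/N_i,\ T_i/N_i\r$ via CRT, then multiplies by $N_i$ to obtain $\l f_i,\ T_i\r$, and finally substitutes $N_i = \prod_j U_{ij}$ into this product of ideals. Your argument is \emph{local}: you prove directly, for each individual modulus $U_{ij}T_{ij}$, the equality $\l a,\ U_{ij}T_{ij}\r = \l U_{ij}b_{ij},\ U_{ij}T_{ij}\r$, using explicitly that $N_i/U_{ij}$ is a unit modulo $T_{ij}$ (hence modulo $U_{ij}T_{ij}$ after multiplying through by $U_{ij}$). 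This makes the crux of the argument --- the invertibility of the ``residual'' factor $N_i/U_{ij}$ --- completely transparent, whereas in the paper's presentation that step is somewhat hidden inside the substitution $N_i \mapsto \prod_j U_{ij}$ in a product of ideals.

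One small point you glossed over: you should state why the call to $\MQ$ at Line~\ref{MFD5:MQ1} is legitimate, i.e.\ why $d_{ij} \ge 0$ always holds for the inner $\WFD5$. The paper checks this explicitly: since $N_i = \gcd(T_i,\ \mathrm{content}(f_i))$, the polynomial $f_i/N_i$ has content coprime with $T_i$, hence coprime with each $T_{ij}$, so $f_i/N_i$ cannot be nilpotent modulo any $T_{ij}$. Without this observation, the input specifications of $\MQ$ are not guaranteed.
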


\begin{proof} Consider $W\in \bfW$  at Line~\ref{MFD5:WF1}, written $W = [f_i,\ T_i,\ d_i,\ \alpha_i,\ N_i]$.
The specifications 1-5. of
Algorithm~\ref{algo:WFD5} ``\WFD5'' give:

1'. $\l a,\ T_i \r = \l f_i,\ T_i\r$,

2'. $\prod_i T_i =T$ and the $T_i$s are pairwise coprime

3'. $d_i=-1$ (Line~\ref{MFD5:di=-1}) or $d_i \ge 0$ (Line~\ref{MFD5:else})

4'. (not important)

5'. $N_i=\gcd(T_i,\ {\rm coeffs.~of~} f_i {\rm~of~degree~} >  d_i)$,
and $\sqfp(N_i)=\sqfp(T_i)$.
\medskip

{\em Case $d_i \ge 0$}. This concerns Line~\ref{MFD5:else} and onward.
This means that $f_i$ is not nilpotent modulo $N_i$ and  $\deg_y(f \bmod N_i) = d_i$.
  The output $[f_i,\ T_i,\ d_i,\ \alpha_i,\ N_i] = W\in \bfW$ of the
  first call to $\WFD5$ at Line~\ref{MFD5:WF1} is then
  ready to be used by Algorithm $\MQ$ at Line~\ref{MFD5:MQ2} (its specifications being satisfied).
  We obtain $\l b_i,\ T_i \r = \l f_i,\ T_i\r$ with $b_i$ monic.
  And consequently, by the Chinese remainder theorem and the specifications 1'-2'. above:
$$
\prod_{i,\ d_i \ge 0} \l b_i,\ T_i \r 
= \prod_{i,\ d_i \ge 0} \l f_i,\ T_i \r
= \prod_{i,\ d_i \ge 0} \l a,\ T_i \r
\simeq \l a,\, \prod\nolimits_{i,\ d_i \ge 0}  T_i\r.
$$
Besides, note that the third component of the list $[b_i,\ T_i,\ 1]$ added to $\out$ at Line~\ref{MFD5:out2}
is $1$, which means that  $U_i=1$. Specification~\ref{MFD5:c} is clearly true
while Specification~\ref{MFD5:b} is void. Moreover:
\begin{equation}\label{eq:CRT1}
\prod_{i,\ d_i \ge 0}  \l U_i b_i,\  U_i T_i \r \simeq \l a,\, \prod\nolimits_{i,\ d_i \ge 0} T_i\r.
\end{equation}

{\em Case $d_i=-1$}. This concerns Lines~\ref{MFD5:di=-1}-\ref{MFD5:out1}.
We then have  $N_i | f_i$ and $N_i=\gcd(T_i,\ {\rm content}(f_i))$.
By specification 5'. above,  $\sqfp(N_i)=\sqfp(T_i)$.
  Thus $\gcd(T_i, {\rm content}(\frac{f_i}{N_i}))=1$. Consequently,
\begin{equation}\label{eq:(ddagger)}
\text{for any}\quad  T_{ij} \quad \text{dividing} \quad T_i
\quad \text{we have}\quad  \gcd\left( T_{ij}, {\rm content}\left(\frac{f_i}{N_i}\right)\right) = 1.
\end{equation}
Consider now Line~\ref{MFD5:WF2}
$\bfZ \leftarrow \WFD5(f_i/N_i,\ T_i/N_i,\ \deg_y(f_i),\ T_i/N_i)$
as well as a 
 component $Z\in \bfZ$, written $Z=[f_{ij},\ T_{ij},\  d_{ij},\ \alpha_{ij},\  N_{ij}]$.
It verifies the specifications 1.-5. of Algorithm~\ref{algo:MQ}.
\smallskip

1''. $\l f_i / N_i,\ T_{ij} \r = \l f_{ij},\ T_{ij}\r$,

2''. $\prod_j T_{ij} =T_i/N_i$ and the $(T_{ij})_j$s are pairwise coprime

3''. $d_{ij}\ge0 $ 

4''. (not important)

5''. $N_{ij}=\gcd(T_{ij},\ {\rm coeffs.~of~} f_{ij} {\rm~of~degree~} > d_{ij})$, 
and $\sqfp(N_{ij})=\sqfp(T_{ij})$.

\smallskip

\noindent Indeed,   $d_{ij} \ge 0$ holds. Otherwise 
$N_{ij} = \gcd(T_{ij}, {\rm content}(\frac{f_i}{N_i}))\not=1$ would  divide $f_{ij}$, excluded as seen above 
in Eq~\eqref{eq:(ddagger)}.
  Therefore, the component $Z$ is ready to be used by Algorithm~\ref{algo:MQ} ``$\MQ$'' at Line~\ref{MFD5:MQ1},
  its input' specifications being satisfied by $f_{ij},\ T_{ij},\ d_{ij},\ \alpha_{ij},\ N_{ij}$. Its output $b_{ij}$ 
  hence verifies $\l b_{ij},\ T_{ij}\r =\l f_{ij}, \ T_{ij}\r$ with $b_{ij}$ monic.
  Specifications 1''-2''. imply, with the Chinese remainder theorem:
\begin{equation}\label{eq:CRT2}
\prod_j \l b_{ij},\ T_{ij}\r=  \prod_j \l f_{ij}, \ T_{ij}\r = \prod_j \l f_i/N_i, \ T_{ij}\r \simeq  \l f_i/N_i,\ T_i/N_i\r.
\end{equation}
Eq.~\eqref{eq:CRT2} then implies  with the Chinese remainder theorem according to Specifications 1'-2'.:
\begin{equation}\label{eq:CRT3}
  \prod_{i, \ d_i =-1} (N_i \prod_j \l  b_{ij},\  T_{ij} \r) \simeq 
  \prod_{i, \ d_i =-1} (N_i \l f_i/N_i,\ T_i/N_i\r )
  \simeq \prod_{i, \ d_i =-1} \l f_i,\ T_i \r
\simeq \prod_{i, \ d_i =-1} \l a,\ T_i \r 
\end{equation}
Line~\ref{MFD5:isol} introduces the $T_{ij}$-component (Definition~\ref{def:isol}) of $U'_{ij-1}$:
$$
\quad (\, U_{ij},\ U_{ij}'\, )  \leftarrow \isol(U'_{ij-1},\ T_{ij}),\quad \text{with} \quad U_{ij}= U_{ij-1}'^{(T_{ij})}\ \text{and} \ U'_{i0}=N_i \ \text{(Line~\ref{MFD5:U0})}.
$$
The specifications of $\isol$ tell
that $U_{ij} U'_{ij} = U_{ij-1}'$ and $U_{ij}$ is coprime with $ U_{ij}'$.
We see then that 
\begin{equation}\label{eq:factNi}
N_i= U_{i0}' = U_{i1} U_{i1}' = U_{i1} U_{i2} U_{i2}' = \cdots = U_{i1} \cdots U_{ij} U_{ij}', \ \text{ for all } j\ge 1.
\end{equation}
Moreover the polynomials in the product are pairwise coprime. By definition, $U_{ij} = U_{ij-1}'^{(T_{ij})} = \prod_{p\in \calP, \ v_p(T_{ij})>0} p^{v_p(U_{ij-1}')}$.
Since $U_{ij-1}'$ is a  factor in the factorization of $N_i$  of Eq.~\eqref{eq:factNi},  we deduce that
 $[ v_p(T_{ij}) > 0 \Rightarrow v_p(U_{ij-1}') =v_p(N_i)]$, and thus that $U_{ij} = N_i^{(T_{ij})}$. 

On the other hand, the specification 2''. above provides $N_i \prod_j T_{ij} = T_i$ with the $(T_{ij})_j$s coprime,
and the specification 5'. gives $\sqfp(T_i) = \sqfp(N_i)$.
It follows that
\begin{equation}\label{eq:Ni}
\prod_{j} U_{ij} = \prod_j \prod_{p\in \calP,\ v_p(T_{ij})>0} p^{v_p(N_i)} =  \prod_{p\in \calP,\ v_p(T_i)>0} p^{v_p(N_i)}
=\prod_{p\in \calP,\ v_p(N_i)>0} p^{v_p(N_i)} = N_i,
\end{equation}
as well as  $\sqfp(U_{ij})=\sqfp(T_{ij})$ if $U_{ij}\not=1$. This proves
Specifications~\ref{MFD5:b} in the case $d_i=-1$.

We have seen that $N_i|f_i$, hence $f_i\in\l N_i\r$.
Specification~1'. thus implies that $a\in \l N_i,\ T_i\r$. By Eq.~\eqref{eq:Ni}
$U_{ij}$ divides $N_i$ and since $T_{ij}$ divides $T_i$ we obtain that $a\in \l U_{ij},\ T_{ij}\r$, or equivalently that $U_{ij}$ divides $a$ modulo $T_{ij}$.
This proves Specification~\ref{MFD5:c} in the case $d_i=-1$.

In Eq.~\eqref{eq:CRT3}, substituting the $N_i$s by $\prod_j U_{ij}$s from Eq.~\eqref{eq:Ni},
and combining with
Eq.~\eqref{eq:CRT1} yield:
$$
\prod_{i, \ d_i =-1} \prod_j \l  U_{ij} b_{ij},\  U_{ij} T_{ij} \r  \cdot \prod_{i,\ d_i\ge 0} \l U_i b_{i},\ U_i T_{i}\r 
\simeq \prod_{i, \ d_i =-1 } \l a,\ T_i \r  \cdot \prod_{i,\ d_i\ge 0} \l a  ,\ T_i \r
\simeq \l a, \ T\r.
$$
This proves Specification~\ref{MFD5:d} in all cases.

Moreover Specification 2''. above  implies that  $ \prod_{i, \ d_i=-1} \prod_j U_{ij}  T_{ij} = \prod_{i, \ d_i=-1} T_i$ with the $(T_{ij})_j$s  pairwise coprime.
With Specification 2'. above $\prod_i T_i = T$, we get 
$$
\prod_{i,\ d_i=-1}   \prod_j  U_{ij} T_{ij} \cdot \prod_{i,\ d_i \ge 0}  U_i T_i =  T.
$$
This proves the specification~\ref{MFD5:a} (in all cases).
\end{proof}

\section{Computation of lexGb through dynamic evaluation\label{sec:lexGbD5}} 
\subsection{Subresultant p.r.s}
With the ability to make monic nilpotent polynomials
occurring in the subresultant p.r.s. of $a$ and $b$ modulo $T$,
we are ready to generalize  the algorithm~\ref{algo:LNL} ``$\LNN$''.
The skeleton is the same as that of Algorithm~\ref{algo:LNL}:
find the ``last non nilpotent'' and the ``first  nilpotent'' polynomials in the modified subresultant p.r.s.
The difference lies in the management of the splittings arising when  making polynomials monic
at Line~\ref{LNL:MF1}, and from the recursive call at Lines~\ref{LNL:ret2}.
The output is a family of  objects each similar to the output of the local version Algorithm~\ref{algo:LNL}.

\begin{algorithm}[h]
\DontPrintSemicolon
\KwIn{1. polynomial $f_1\in k[x,y]$ which has an invertible leading coefficient modulo $T$.

2. polynomial $f_2 \in k[x,y]$,  satisfies Assumption~\eqref{tag:H}
(in particular  is not zero).

3.  $T$ is a monic polynomial $T\in k[x]$
}
\KwOut{1-2. monic (in $y$) polynomials $a_i, \ b_i\in k[x,y]$
  verifying the degree condition of Proposition~\ref{prop:LNLD5}.

3.-4. monic polynomials $U_i,T_i\in k[x]$ such that the family $(T_i U_i)_i$ is pairwise coprime,
and if $T_i \not= 1$ then $\sqfp(T_i) = \sqfp(U_i)$.

  \begin{equation}\label{eq:LNLD5}
  \prod\nolimits_i T_i U_i = T,\qquad \l f_1, \ f_2 , \ T\r\simeq \prod\nolimits_i \l a_i,\ T_i b_i,\ U_i T_i\r
  \end{equation}
}

\BlankLine

$\out\leftarrow []$\;

\Repeat(\nllabel{LNLD5:prs})
{
  $\lc(f_i)$ is {\em not} invertible modulo $T$ or $f_{\ell+1}=0$
}
{Compute the subresultant p.r.s $f_1,\ f_2,\ldots,\ f_\ell,\ f_{\ell+1}$  of $f_1$ and $f_2$ modulo $T$
(with $f_{\ell+1}=0$, following Proposition~\ref{prop:prs}).
}

\If(\tcp*[f]{$\lc(f_j)$ is invertible modulo $T$ for $j <i$})
   {$\lc(f_i)$ is not invertible modulo $T$\nllabel{LNLD5:if}}{
     $a \leftarrow (\lc(f_{i-1})^{-1} \bmod T) f_{i-1} \bmod T$\nllabel{LNLD5:a}     \tcp*[r]{$a$ is monic and $\l a,\ T\r =\l f_{i-1},\ T\r$}
     $ \bMb \leftarrow \MFD5(f_i,\ T)$ \nllabel{LNLD5:MF1}
\tcp*[r]{Write $\bMb = [[b_j,\ U_j,\ T_j]_j]$}
     \For
{$[b_j,\ U_j,\ T_j] \in \bMb$}{
       \eIf(\tcp*[f]{$f_i$ is nilpotent mod $U_j T_j$}){$T_j\not=1$\nllabel{LNLD5:if2}}{
        $\out = \out \cat  [[a,\ b_j,\ T_j,\ U_j]] $ \nllabel{LNLD5:ret1}\;
       }(\tcp*[f]{$f_{i}$ is not nilpotent modulo $U_j T_j$}\nllabel{LNLD5:else})
           {$\out = \out \cat \LNND5(a,\ b_j,\ U_j)$\nllabel{LNLD5:rec}}
     }
     \Return{$\out$}\nllabel{LNLD5:ret2}
   }

   \tcp*[f]{Here, all $\lc(f_1),\ldots, \lc(f_\ell)$ are invertible modulo $T$, and $f_{\ell+1}=0$}\;
   \Return{$[[(\lc(f_\ell) \bmod T)^{-1} f_\ell ,\ 0,\ 1,\ T]]$\nllabel{LNLD5:ret3}}

\caption{\label{algo:LNLD5} \quad $[[a_i,\ b_i,\ T_i,\ U_i]_i] \leftarrow \LNND5(f_1,\ f_2,\ T)$}
\end{algorithm}

\begin{ex}[Algorithm~\ref{algo:LNLD5} ``$\LNND5$'']\label{ex:LNLD5}
Consider $T=x^2 (x+1)^2$, and $a= y^3 + (x-1) y^2 + x(x+1) y +x$ and $b=y^3 - y^2 + y-x$.
The subresultant of degree 2 is $S_2(a,\ b) =  x y^2  + ( x^2 + x-1 )y + 2 x$.
Since $\lc(S_2) = x$ is not invertible modulo $T$, the algorithm calls $\MFD5( S_2, \ T)$ at Line~\ref{LNLD5:MF1}.
It outputs two branches:
$$
[ [ y^2 + (2 x+3) y + 2,\ (x+1)^2, \ 1 ],\ [ y- 2 x ,\ x^2,\ 1 ] ]
\leftarrow
\MFD5( x y^2  + ( x^2 + x-1 ) y + 2 x,\ T)
$$
Recursive calls are then performed on each of these two branches:
\smallskip

1st branch: $[[1,\ 0,\ 1, \ (x+1)^2]] \leftarrow \LNND5(b,\ y^2 + (2 x+3) y + 2,\ (x+1)^2) $.

2nd branch: $[ [ y- 2 x,\ 1,\ x,\ x]] \leftarrow \LNND5(b,\ y - 2 x ,\ (x+1)^2)$.
\smallskip

\noindent Finally: $[[1,\ 0,\ 1, \ (x+1)^2],
  \  [ y- 2 x,\ 1,\ x,\ x]] \leftarrow \LNND5(a,\ b,\ T)$,
meaning that $\l a,\ b,\ T\r \simeq \l 1,\ 0,\ (x+1)^2\r \cdot \l y- 2 x,\ x,\ x^2\r$ (actually, the latter product of ideals is equal to $\l y,\ x\r$).
\qed  \end{ex}

\begin{Prop}[Correctness of Algorithm~\ref{algo:LNLD5} ``$\LNND5$'']\label{prop:LNLD5}
The family $[[a_i,\ b_i,\ T_i,\ U_i]_i]$ output by $\LNND5(f_1,\ f_2,\ T)$  satisfies the specifications 1-4 and Eq.~\eqref{eq:LNLD5}.

We also have the {\em degree conditions:} For all $j$,
\begin{equation}\label{eq:deg-cond-D5}
\deg_y(a_j)\le \deg_y(f_1), \quad \text{and if}\quad b_j\not=0,
\quad 
\deg_y(b_j) < \deg_y(f_2),
\quad \deg_y(b_j) < \deg_y(a_j)
\end{equation}
Moreover $[\deg_y(a_j) = \deg_y(f_1)] \Rightarrow [\deg_y(f_1)=\deg_y(f_2)]$. 
\end{Prop}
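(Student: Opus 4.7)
The plan is to imitate the three-case analysis of Proposition~\ref{prop:LNL}, now threaded through the splittings produced by ``$\MFD5$'' and by the recursive calls at Line~\ref{LNLD5:rec}. The induction is on $\deg_y(f_2)$: as in the local case, each recursive invocation of $\LNND5$ operates on a second argument of strictly smaller $y$-degree. The base case is the exit at Line~\ref{LNLD5:ret3}, where all $\lc(f_j)$ up to some $\ell$ are invertible modulo $T$ and $f_{\ell+1} = 0$; Corollary~\ref{cor:prs} then gives $\l f_1,f_2,T\r = \l f_\ell,T\r$, and the single-component output trivially satisfies Eq.~\eqref{eq:LNLD5} and the degree conditions (since $b_j = 0$).

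For the exit at Line~\ref{LNLD5:ret2}, I unpack the output $\bMb = [[b_j,U_j,T_j]_j]$ of $\MFD5(f_i,T)$ using Proposition~\ref{prop:MFD5}: the $(U_j T_j)_j$ are pairwise coprime with product $T$, and $\prod_j \l U_j b_j,\ U_j T_j\r \simeq \l f_i,\ T\r$. Combined with $\l a,\ T\r = \l f_{i-1},\ T\r$ and Corollary~\ref{cor:prs}, the Chinese remainder theorem yields
\begin{equation*}
\l f_1,\ f_2,\ T\r \simeq \prod\nolimits_j \l a,\ U_j b_j,\ U_j T_j\r.
\end{equation*}
For each index $j$ I then split on $T_j \neq 1$ (nilpotent branch, appended as-is at Line~\ref{LNLD5:ret1}) or $T_j = 1$ (non-nilpotent branch). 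In the latter, $b_j$ is monic with $\l b_j,\ U_j\r = \l f_i,\ U_j\r$ and $\deg_y(b_j) < \deg_y(f_i) \leq \deg_y(f_2)$; strictness holds because $\lc(f_i)$ is not invertible while $f_i$ itself is not nilpotent modulo $U_j$, so Weierstrass factorization strictly drops the $y$-degree. The inductive hypothesis applied to $\LNND5(a,b_j,U_j)$ then furnishes a family satisfying Eq.~\eqref{eq:LNLD5} for $(a,b_j,U_j)$, and the final CRT product assembles all the pieces into the claimed statement for $(f_1,f_2,T)$.

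Coprimality of the full output family $(U_i T_i)_i$ propagates through the recursion: the $(U_j T_j)_j$ out of a single call to $\MFD5$ are pairwise coprime, and each recursive call at Line~\ref{LNLD5:rec} replaces one pair $(1,U_j)$ by a family of pairwise coprime divisors of $U_j$, which stay coprime to the sibling branches above. The condition $\sqfp(U_i) = \sqfp(T_i)$ when $T_i \neq 1$ is inherited directly from the corresponding clause of Proposition~\ref{prop:MFD5}. For the degree conditions~\eqref{eq:deg-cond-D5}, the argument copies the analysis of Proposition~\ref{prop:LNL} componentwise: the corner cases~\ref{enum:LNN-i}--\ref{enum:LNN-iii} are handled by the same three scenarios (exactly one pseudo-division, zero pseudo-divisions, etc.), so the implication $\deg_y(a_j) = \deg_y(f_1) \Rightarrow \deg_y(f_1) = \deg_y(f_2)$ follows componentwise from the local statement.

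The main obstacle is the careful bookkeeping of coprimality and of the CRT isomorphism across the nested $\MFD5$ and recursive $\LNND5$ calls---not any single computational step, but the verification that every split produced at a deeper level stays within a factor that is coprime to its sibling branches above, so that the aggregate product of ideals is indeed direct and matches the form in Eq.~\eqref{eq:LNLD5}.
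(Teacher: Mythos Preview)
Your correctness argument is essentially the paper's: same base case at Line~\ref{LNLD5:ret3} via Corollary~\ref{cor:prs}, same unpacking of $\MFD5$ via Proposition~\ref{prop:MFD5}, same split on $T_j\neq 1$ versus $T_j=1$, same induction on the strictly smaller $y$-degree of the second argument in the recursive call, and the same CRT assembly. The bookkeeping you flag as the main obstacle is exactly what the paper handles by citing the pairwise-coprimality clause of Proposition~\ref{prop:MFD5} and then noting that each recursive call only refines a single factor.

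There is, however, a gap in your degree-condition argument. You say the analysis ``copies Proposition~\ref{prop:LNL} componentwise'' and that the corner cases~\ref{enum:LNN-i}--\ref{enum:LNN-iii} are ``handled by the same three scenarios.'' But the degree conditions~\eqref{eq:deg-cond-D5} are \emph{strictly stronger} than those of Proposition~\ref{prop:LNL}: here $\deg_y(b_j)<\deg_y(a_j)$ and $\deg_y(b_j)<\deg_y(f_2)$ hold unconditionally when $b_j\neq 0$, with no corner-case exceptions. Merely importing the local analysis would leave open the possibility $\deg_y(b_j)=\deg_y(a_j)$ (corner case~\ref{enum:LNN-i}). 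What closes this gap is Assumption~\eqref{tag:H}, which you never invoke: it forbids $f_2$ from being nilpotent modulo any $U_jT_j$, so the zero-pseudo-division scenario cannot occur at all, and in the one-pseudo-division scenario the branch where $\lc(f_2)$ is non-invertible forces a recursive call whose monic inputs must undergo a further pseudo-division, a contradiction. The paper's proof uses~\eqref{tag:H} precisely at these two points to eliminate the corner cases rather than carry them over.
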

\begin{proof}
We separate the proof of correctness, from the proof 
of the degree conditions, treated after.

{\em Proof of correctness:} We investigate the two
returns at Line~\ref{LNLD5:ret3} and at Line~\ref{LNLD5:ret2}.

  {\em Case of return at Line~\ref{LNLD5:ret3}}. The subresultant p.r.s. was
  computed modulo $T$ without failure until to get a zero $f_{\ell+1}=0$.
  By Corollary~\ref{cor:prs}, $\l f_1,\ f_2,\ T \r = \l f_\ell,\ f_{\ell+1},\ T\r$,
  whence the output $[[ u, \ v, \  1,\ T ]]$  with $u\equiv \lc(f_\ell)^{-1} f_\ell \bmod T$,
  $v=f_{\ell+1}=0$ verifies $\l u,\ v, \ T\r =\l f_1,\ f_2,\ T\r$. This equality is a special easy case of Eq.~\eqref{eq:LNLD5} when the output has one component only.

  {\em Case of return at Line~\ref{LNLD5:ret2} }. The subresultant p.r.s $f_1,\ f_2,\ \ldots,$ was correctly computed until $f_i$,  $\lc(f_i)$ being not invertible modulo $T$.
  Since $f_{i-1}$ passed the if-test at Line~\ref{LNLD5:if}, it has an invertible leading coefficient and $a\equiv \lc(f_{i-1})^{-1} f_{i-1} \bmod T$ makes sense at Line~\ref{LNLD5:a}
  and $\l f_{i-1},\ T\r = \l a, \ T\r$.
  
Algorithm~\ref{algo:MFD5} ``$\MFD5$''  is then called at Line~\ref{LNLD5:MF1}
  to ``make''  $f_{i}$ monic. The output $\bMb$ is a list of lists of three polynomials, 
say  $[[b_j,\ T_j,\ U_j]_j]$
verifying
$\l T_j b_j,\ U_j T_j\r = \l f_i,\ U_j T_j\r $
and $\prod_j U_j T_j = T$, with $b_j$ monic; Also the polynomials $(U_j T_j)_j$s are pairwise coprime and $\sqfp(U_j)=\sqfp(T_j)$ if $T_j\not=1$. 
Next two cases occur: either $f_i$ is nilpotent modulo $U_j T_j$ (Line~\ref{LNLD5:if2}) or not (Line~\ref{LNLD5:else}).

In the first case, $f_{i-1}$ is the last non-nilpotent and $f_i$ is the first nilpotent polynomial met in the modified p.r.s.
This is the expected result,
thus the component $[a,\ b_j, \ T_j,\ U_j]$ is added to the output $\out$.
With Corollary~\ref{cor:prs}, we obtain:
 \begin{equation}\label{eq:dagger}
\l f_1,\ f_2, \ U_j T_j\r = \l f_{i-1},\ f_i,\ U_j T_j\r 
= \l a,\ T_j b_j,\ T_j U_j\r
\end{equation}

 In the second case, $T_j=1$, $f_i$ is not nilpotent modulo $U_j T_j$.
 Since  $\lc(f_i)$ is not invertible modulo $T$ (Line~\ref{LNLD5:if}),
 all monic forms $(b_j)_j$ verify $\deg_y(b_j)<\deg_y(f_i)$. Besides,
 $\deg_y(a) = \deg_y(f_{i-1})\le \deg_y(f_1)$ and $\deg_y(U_j) \le \deg_y(T)$.
 Therefore, the input $(a,\ b_j,\ U_j)$ of the recursive call at Line~\ref{LNLD5:rec}
 displays a strict degree decrease compared to the main call with input
 $(f_1,\ f_2,\ T)$.
We can then assume by induction that the output is correct. Write
  $$
  [[u_{j n},\ v_{j n},\ V_{j n},\ W_{j n}]_n] \leftarrow  \LNND5(a,\ b_j,\ U_j).
  $$
  We have
  $\prod_n \l u_{j n},\ V_{j n} v_{j n},\ W_{j n} V_{j n}\r \simeq \l a,\ b_j ,\  U_j\r$.
  Besides,  from the equalities
  $$
  \l a,\ T\r =\l f_{i-1},\   T\r,\qquad \l T_j b_j,\  T_j U_j\r = \l f_i,\ U_j T_j\r,
  \qquad T_j=1, \quad \text{and} \quad U_j|T
  $$
  in this case, we get: $ \l a,\ b_j ,\  U_j\r  =\l f_{i-1},\ f_i,\ U_j\r$.
  Corollary~\ref{cor:prs} provides the equality of ideals  $\l a,\ b_j ,\  U_j\r  = \l f_1,\ f_2, \ U_j\r$, and we have:
  \begin{equation}\label{eq:ddagger}
    \prod_n \l u_{j n}, V_{j n} v_{j n}, \ W_{j n} V_{j n}\r \simeq \l f_1,\ f_2, \ U_j\r.
  \end{equation}
  Recall that $\prod_j T_j U_j =T$ and that the $(U_j T_j)_j$ are pairwise coprime.
  The Chinese remainder theorem applied to Eq.~\eqref{eq:dagger} when $T_j\not=1$ and to Eq.~\eqref{eq:ddagger} when $T_j=1$ gives:
\begin{equation}\label{eq:LNLD5iso}
\prod_{j,\ T_j\not=1} \l a,\ T_j b_j,\ T_j U_j\r \ \cdot\ \prod_{j,\ T_j=1} \prod_n \l u_{j n},\ V_{j n} v_{j n},\ W_{j n } V_{j n }\r \simeq
\l f_1, \ f_2,\ T\r
\end{equation}
The final output of the algorithm is then 
$$
\out=\ \underset{j,\ T_j\not=1}{\cat}\  [[ a,\ b_j,\ T_j,\ U_j]]\  \underset{j,\ T_j=1}{\cat}\ \  \underset{n}{\cat}\ [[u_{j n},\ v_{j n},\ V_{j n} ,\ W_{j n }]].
$$
And with these notations, the isomorphism~\eqref{eq:LNLD5iso}
above is precisely Eq.~\eqref{eq:LNLD5}. This achieves the proof of correctness of the algorithm in all cases.
\medskip

{\em Proof of the degree condition.} Consider one component $[a_j,\ b_j,\ U_j]$.
Assume that the total number of pseudo-divisions (including those occurring in recursive calls) is at least two.
This implies at least two strict degree decreases, and since
$a_j$ and $b_j$ are the monic form of the two last polynomials in the modified subresultant p.r.s, this implies:
$$
\deg_y(a_j) < \deg_y(f_1), \quad \text{and if}\quad b_j\not=0\quad \deg_y(b_j)<\deg_y(f_2),\ \ \text{and}\ \   \deg_y(b_j)<\deg_y(a_j),
$$
proving~\eqref{eq:deg-cond-D5} in case of more than one pseudo-division.
 
If only one pseudo-division occurs in the algorithm,
then two possibilities may happen:
\begin{enumerate}[-1- ]
\item\label{enum:-1-bis} either $\lc(f_2)$ is invertible modulo $T$ and $f_3=\pm \prem(f_1,\ f_2)$
\item\label{enum:-2-bis} or not and then, let $[[b_j,\ U_j,\ T_j]_j]$
  be the monic form of $f_2$ (Line~\ref{LNLD5:MF1}),
  and $a$ that of $f_1$ (Line~\ref{LNLD5:a}).
  According to Assumption~\eqref{tag:H}, $f_2$ is not nilpotent modulo $U_j T_j$ for any $j$: the algorithm then never goes through the lines~\ref{LNLD5:if2}-\ref{LNLD5:ret1}. At Line~\ref{LNLD5:rec}, a recursive
  call occurs: ``$\LNND5(a,\ b_j, U_j)$'', with $a$ and $b_j$ monic.
  Inside this recursive call, another pseudo-division takes place, a contradiction.
\end{enumerate}
Only Case~\ref{enum:-1-bis} can happen. Let $a$ be the monic form of $f_2$ (Line~\ref{LNLD5:a}).
Write the monic form of $f_3$ at Line~\ref{LNLD5:MF1} as $[[b_j,\ U_j,\ T_j]_j]$.
$$
\text{if} \quad b_j\not=0,\qquad \deg_y(b_j)\le \deg_y(f_3)<\deg_y(f_2)=\deg_y(a)\le \deg_y(f_1).
$$
This proves Eq~\eqref{eq:deg-cond-D5}.
For $\deg_y(a) = \deg_y(f_1)$ to hold, necessarily
$\deg_y(f_1) = \deg_y(f_2)$. This achieves the proof
of the degree condition in the case of one pseudo-division.

If no pseudo-division at all occur, then $\lc(f_2)$ is not invertible modulo $T$.
Moreover the recursive call ``$\LNND5(a,\ b_j,\ U_j)$'' at Line~\ref{LNLD5:rec} does not involve pseudo-division neither.
But since $a$ and $b_j$ are monic, this cannot be the case. Thus the algorithm
never reaches Line~\ref{LNLD5:rec}. It cannot reach the lines~\ref{LNLD5:if2}-\ref{LNLD5:ret1} neither: this would mean that $f_2$ is nilpotent
modulo $U_i T_i$, excluded by Assumption~\eqref{tag:H}. The situation
of no pseudo-division does not occur.
\end{proof}

\begin{Rmk}\label{rmk:LNLD5=1}
A component $[a_i,\ b_i,\ T_i,\ U_i]$ of the output verifies $T_i=1$ if and only if
$b_i=0$. Indeed, this occurs only at an exit Line~\ref{LNLD5:ret3}. The other possibility at Line~\ref{LNLD5:ret1} is made
under the condition $T_i\not=1$ (Line~\ref{LNLD5:if2}). At Line~\ref{LNLD5:ret1},  we have
$b_j\not=0$, as an output of $\MFD5$ with input a non-zero polynomial.
\end{Rmk}
\subsection{Family of minimal lexicographic Gr\"obner bases from the subresultant p.r.s}
We focus now on translating Algorithm~\ref{algo:SToGB} ``$\SToGB$'' to dynamic evaluation. A direct consequence is that the output
is no more one lexGb, but a family of thereof.

\paragraph{Overview of Algorithm~\ref{algo:SToGBD5} ``$\SToGBD5$''}
Naively, it suffices to track divisions in the local version Algorithm~\ref{algo:SToGB} ``$\SToGB$'' and to create splittings accordingly.
A slight difficulty occurs in that the ``$\LNND5(a,\ b,\ T)$'' algorithm assumes that $a$ has an invertible leading coefficient  modulo $T$.
This assumption is too restrictive in many cases: there are some local components of $T$ over which this assumption is verified and others over which it is not.
To circumvent this, it suffices to ``make monic'' $a$ with a call to ``$\MFD5$'' (Line~\ref{SToGBD5:a}) and proceed with this output.
To clarify the exposition, an auxiliary algorithm~\ref{algo:SToGBA} ``$\SToGBA$'' first treats the case of a polynomial $a$ that has an invertible
leading coefficient modulo $T$, which becomes a subroutine in the main algorithm~\ref{algo:SToGBD5}.

However, it is assumed that both
$a$ and $b$ satisfy Assumption~\eqref{tag:H}.
It says that for any primary factor $p^e$ of $T$, $a$ and $b$ are not nilpotent modulo $p^e$.
This assumption can likely be removed; we let if for future work.

\begin{algorithm}[!h]
\SetAlgoVlined 
\DontPrintSemicolon 
\KwIn{ 1-2. polynomials $a ,\ b\in k[x,y]$, $\deg_y(a)\ge \deg_y(b)$. The leading coefficient of $a$ is invertible modulo $T$.

 3. $T\in k[x]$ is a monic polynomial.}

\KwOut{Family of minimal lexGbs $\bfG=(\calG_i)_i$ written (following Theorem~\ref{th:Laz}) as in Eq.~\eqref{SToGBA:specif}
  and verifying the specifications (i)-(ii)-(iii) of Proposition~\ref{prop:SToGBA}
  \vspace{-2ex}

  \begin{equation}\label{SToGBA:specif}
    \calG_i = [h_{i1},\ h_{i2} f_{i2},\ \ldots,\ h_{i \mu-1} f_{i \mu -1},\ f_{i \mu}],\quad h_{ij} \in k[x],\ f_{ij} \in  k[x,y] {\rm~monic~in~} y,
  \end{equation}
}

\BlankLine

\If{$a$ or $b$ is an invertible constant modulo $T$\nllabel{SToGBA:1}}{\Return{$[[1]]$}\nllabel{SToGBA:endab}}

$\bfS  \leftarrow \LNND5(a,\ b,\ T)$ \nllabel{SToGBA:LNL}
\tcp*[r]{Write $\bfS=[[g_i,\ h_i,\ U_i,\ T_i]_i]$}
 \For
{$[g_i,\ h_i,\ U_i,\ T_i]\in \bfS$}{
   \If(\tcp*[f]{Non-trivial input: pursue computations}){$g_i\not=1$\nllabel{SToGBA:if}}{
     \eIf(\tcp*[f]{Here $g_i$ is a gcd mod $T_i$}){$h_i = 0$}{
       $\out \leftarrow \out \cat [[g_i,\ U_i T_i]]$\nllabel{SToGBA:out}\;
     }(\nllabel{SToGBA:else})
         {
           $\bfG_i\leftarrow \SToGBA(g_i,\ h_i,\ U_i)$\nllabel{SToGBA:rec}\tcp*[r]{Recursive call}
           $V_{i0}' \leftarrow T_i$\nllabel{SToGBA:0}\;
           \For(\tcp*[f]{$G_{ij}$ is a minimal lexGb}){$G_{ij}\in \bfG_i$}{
             \tcp*[r]{Write $G_{ij}=[ h_{ij1},\ h_{ij2} f_{ij2},\ \ldots,\ h_{ij \lambda-1} f_{ij \lambda-1} ,\ f_{ij\lambda}]$ as in Theorem~\ref{th:Laz}}\nllabel{SToGBA:isol}
             $(\, V_{ij},\ V_{ij}'\, ) \leftarrow \isol(V_{ij-1}',\ h_{ij1})$\;
              $ G_{ij}^{new}\ \leftarrow \   V_{ij} \cdot G_{ij} ~ \cat ~ [g_i]$ \nllabel{SToGBA:switch}\;
           }
           $\out \leftarrow \out \cat [G_{ij}^{new}]$\;
         }
   }
 }
 \Return{$\out$}
\caption{\label{algo:SToGBA} \quad $\bfG \leftarrow \SToGBA(a,\ b,\ T)$ }
\end{algorithm}

\begin{Prop}[Correctness of Algorithm~\ref{algo:SToGBA} ``$\SToGBA$'']\label{prop:SToGBA}
Let $\bfG=(\calG_i)_i$ be the output of $\SToGBA(a,\ b,\ T)$.
Writing the systems  $(\calG_i)_i$ as in Eq.~\eqref{SToGBA:specif},
$$
\calG_i = [\ h_{i1},\ h_{i2} f_{i2},\ \ldots,\ h_{i \mu-1} f_{i \mu -1},\ f_{i \mu}\ ],
$$
they are minimal lexGbs verifying the specifications:
 \begin{enumerate}[{\em (i)}]
  \item $h_{i1}$ and $h_{j1}$ are coprime.
    Additionally,  $\prod_i h_{i1} | T$ and $\prod_i \sqfp(h_{i1}) = \sqfp(T)$.
  \item $\sqfp(h_{i1})=\sqfp(h_{i2})=\cdots =\sqfp(h_{i \mu-1})$.
  \item   $\prod_i \l \calG_i\r = \l a,\ b,\ T\r$ and $\l \calG_i\r + \l \calG_j\r =\l 1\r$ for $i\not=j$.
\end{enumerate}
\end{Prop}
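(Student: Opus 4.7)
The plan is to argue by induction on the lexicographic pair $(\deg_y(a),\deg_y(b))$, with termination ensured by Proposition~\ref{prop:LNLD5}: it provides $\deg_y(g_i)\le\deg_y(a)$ (with equality forcing $\deg_y(a)=\deg_y(b)$) and $\deg_y(h_i)<\deg_y(g_i)$ when $h_i\neq 0$, so the recursive call $\SToGBA(g_i,h_i,U_i)$ at Line~\ref{SToGBA:rec} strictly decreases this pair. This call is legitimate because $g_i$ and $h_i$ are monic in $y$ (hence satisfy Assumption~\eqref{tag:H} modulo $U_i$). The two base cases are handled at Line~\ref{SToGBA:endab} (where $\langle a,b,T\rangle=\langle 1\rangle$) and at Line~\ref{SToGBA:if} where $g_i=1$ contributes a trivial factor.

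After invoking Proposition~\ref{prop:LNLD5}, I have pairwise coprime $(T_iU_i)_i$ with $\prod_iT_iU_i=T$ and $\langle a,b,T\rangle\simeq\prod_i\langle g_i,T_ih_i,U_iT_i\rangle$. For a component with $h_i=0$, Remark~\ref{rmk:LNLD5=1} gives $T_i=1$, so the two-element list $[g_i,U_iT_i]$ is a minimal lexGb of $\langle g_i,U_i\rangle$ in the $\ell=2$ case of Theorem~\ref{th:Laz}(A). For a component with $h_i\neq 0$, the inductive hypothesis yields a family $\bfG_i=(G_{ij})_j$ of minimal lexGbs with $\prod_j\langle G_{ij}\rangle=\langle g_i,h_i,U_i\rangle$ and $(\langle G_{ij}\rangle)_j$ pairwise coprime. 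Writing each $G_{ij}$ in Lazard form with top polynomial $f_{ij\lambda}$, I apply Theorem~\ref{th:Laz}(B) to conclude that $G_{ij}^{new}=V_{ij}\cdot G_{ij}\cat[g_i]$ is a minimal lexGb. The two hypotheses of Part~(B) are (a)~$\deg_y(g_i)>\deg_y(f_{ij\lambda})$, which follows from propagating the degree condition of Proposition~\ref{prop:LNLD5} through all nested $\LNND5$ calls starting from $\deg_y(h_i)<\deg_y(g_i)$, and (b)~$g_i\in\langle G_{ij}\rangle$, which follows from pairwise coprimality: $\bigcap_j\langle G_{ij}\rangle=\prod_j\langle G_{ij}\rangle=\langle g_i,h_i,U_i\rangle\ni g_i$. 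Non-constantness of the multiplier $V_{ij}$ (needed for Lazard~B) follows because $V_{ij}=(V'_{ij-1})^{(h_{ij1})}$ inherits every irreducible factor of $h_{ij1}$: each such factor divides $U_i$ hence $T_i$, and survives in $V'_{ij-1}$ by coprimality of $h_{ij1}$ with the earlier $h_{ij'1}$'s (inductive~(i)).

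Specifications (i)--(ii) of the final output follow by bookkeeping: $\sqfp(V_{ij})=\sqfp(h_{ij1})$ by definition of $\isol$; combined with inductive~(ii), $\sqfp(V_{ij}h_{ijk})=\sqfp(V_{ij})$ for every $k$, giving (ii). The first elements $V_{ij}h_{ij1}$ are pairwise coprime within a fixed $i$ because the $(V_{ij})_j$ are pairwise coprime (their irreducible supports partition $\sqfp(U_i)$) and the $(h_{ij1})_j$ are pairwise coprime by inductive~(i); across different $i$ they divide the pairwise coprime polynomials $T_iU_i$. Since $\prod_jV_{ij}=T_i$ and $\prod_j\sqfp(h_{ij1})=\sqfp(U_i)=\sqfp(T_i)$, one obtains $\prod_{\text{all}}\sqfp(h_{i1})=\sqfp(T)$ and $\prod_{\text{all}}h_{i1}\mid T$.

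The main obstacle is~(iii). I pass to the quotient $A=k[x,y]/\langle g_i\rangle$, where $\overline{\langle G_{ij}^{new}\rangle}=V_{ij}\overline{\langle G_{ij}\rangle}$. These ideals are pairwise coprime in $A$ because the coprimality of $V_{ij}h_{ij1}$ and $V_{ij'}h_{ij'1}$ in $k[x]$ is inherited in $A$. Hence $\bigcap_j V_{ij}\overline{\langle G_{ij}\rangle}=\prod_j V_{ij}\overline{\langle G_{ij}\rangle}=\bigl(\prod_jV_{ij}\bigr)\prod_j\overline{\langle G_{ij}\rangle}=T_i\,\overline{\langle h_i,U_i\rangle}$, using inductive~(iii) (the image of $g_i$ vanishes in $A$). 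Lifting back gives $\prod_j\langle G_{ij}^{new}\rangle=\langle g_i\rangle+\langle T_ih_i,T_iU_i\rangle=\langle g_i,T_ih_i,T_iU_i\rangle$; multiplying across $i$ via Proposition~\ref{prop:LNLD5} concludes that the global product equals $\langle a,b,T\rangle$, while pairwise coprimality of distinct $\calG$'s in $\out$ follows from the coprimality of their first elements $h_{i1}$ established above.
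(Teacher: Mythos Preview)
Your proof is correct and follows the paper's overall inductive scheme closely (degree-decrease via Proposition~\ref{prop:LNLD5}, Lazard~(B) for minimality, bookkeeping of the $V_{ij}$'s via $\isol$). The one place where you take a genuinely different route is in establishing~(iii): the paper invokes the Appendix identity (Lemma~\ref{lem:id}) to prove
\[
\prod_j\bigl(\l V_{ij}G_{ij}\r+\l g_i\r\bigr)=\Bigl(\prod_j\l V_{ij}G_{ij}\r\Bigr)+\l g_i\r,
\]
whereas you bypass this by passing to the quotient $A=k[x,y]/\l g_i\r$ and computing the product there. Your approach is more self-contained (no appendix lemma needed) and exploits directly the pairwise coprimality you have already established; the paper's approach is more symmetric and reusable (it is also called in the proof of Theorem~\ref{th:SToGBD5}).

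One sentence is missing in your ``lifting back'' step. From $\overline{\prod_j\l G_{ij}^{new}\r}=\overline{\l T_ih_i,T_iU_i\r}$ in $A$ you only get $\prod_j\l G_{ij}^{new}\r+\l g_i\r=\l g_i,T_ih_i,T_iU_i\r$; to drop the ``$+\l g_i\r$'' you need $g_i\in\prod_j\l G_{ij}^{new}\r$. This is true and follows from ingredients you have already set up: the $\l G_{ij}^{new}\r$ are pairwise coprime in $k[x,y]$ (their first elements $V_{ij}h_{ij1}$ are pairwise coprime in $k[x]$), so their product equals their intersection, and $g_i$ lies in each of them as an explicit generator. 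You should make that one line explicit.
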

\begin{proof}
If the input $a$ or $b$ is an invertible constant modulo $T$
(Line~\ref{SToGBA:1}) then the ideal $\l a,\ b,\ T\r=\l 1\r$. The output Line~\ref{SToGBA:endab} is then logically $[[1]]$.

The call to $\LNND5(a,\ b,\ T)$ at Line~\ref{SToGBA:LNL} outputs a family $[[g_i,\ h_i,\ U_i, T_i]_i]$
such that 
\begin{equation}\label{eq:gi=a}
\l g_i,\ h_i T_i,\ T_i U_i\r= \l a,\ b,\ U_iT_i\r
\end{equation}
 with the polynomials $(U_i T_i)_i$ pairwise
coprime and $\sqfp(U_i)=\sqfp(T_i)$ if $U_i\not=1$.

If $g_i=1$ then  the ideal $\l g_i,\ h_i U_i,\ T_i U_i\r =\l 1\r$ and the corresponding lexGb is $[1]$,
which does not need to be added to the output. That is why the if-test at Line~\ref{SToGBA:if} discards this case.

If $g_i\not=1$ and $h_i=0$ then $\l a,\ b,\ U_i T_i \r = \l g_i,\ T_i U_i\r$,
and  $[g_i, \ T_i U_i]$  is a minimal lexGb, which is added to the output (Line~\ref{SToGBA:out}).

Otherwise, a recursive call at Line~\ref{SToGBA:rec} is performed.
Its validity is proved in the lemma~\ref{lem:heart} hereafter:
we can assume that the output $\bfG_i$
verifies the output's specifications.
\begin{Lem}\label{lem:heart}
The recursive call $\bfG_i\leftarrow\SToGBA(g_i,\ h_i,\ U_i)$ at Line~\ref{SToGBA:rec} is valid.
\end{Lem}
\begin{proof}
  By the degree condition Eq.~\eqref{eq:deg-cond-D5} of Proposition~\ref{prop:LNLD5}
  related to Algorithm  $\LNND5$, we have $\deg_y(g_i)>\deg_y(h_i)$ unless $h_i=0$.
Regarding that $h_i\not=0$ ({\tt else} at  Line~\ref{SToGBA:else}) we have $\deg_y(g_i)>\deg_y(h_i)$. Moreover $g_i$ is monic so the input $(g_i,\ h_i,\ U_i)$ satisfies the input's specifications of Algorithm~\ref{algo:SToGBA} ``$\SToGBA$''.
Still from that proposition~\ref{prop:LNLD5}, $\deg_y(g_i) = \deg_y(a)$ possibly holds only if $\deg_y(a)=\deg_y(b)$.
But then $\deg_y(h_i)<\deg_y(g_i)<\deg_y(b)$. Thus, in any situations the input $(g_i,\ h_i,\ U_i)$ presents a degree decrease compared to that
of the main call $(a,\ b,\ T)$. By induction, we can assume that
the output is as expected.
\end{proof}
Writing $\bfG_i=(G_{ij})_j$, where $(G_{ij})_j$ is a family of minimal
lexGbs that we denote 
$G_{ij}=[ h_{ij1},\ h_{ij2} f_{ij2},$ $ \ldots,$ $\ h_{ij \lambda-1} f_{ij \lambda-1} ,\ f_{ij\lambda}]$ as in Theorem~\ref{th:Laz}.
They verify:

\noindent (i') $h_{ij1}$ and $h_{i\ell 1}$ are coprime and $\prod_j h_{ij1} | U_i$, as well as $\prod_j \sqfp(h_{ij1}) = \sqfp(U_i)$.\\
(ii') $\sqfp(h_{ij1}) =\cdots =\sqfp(h_{ij\lambda-1} )$.\\
(iii') $\prod_j \l G_{ij}\r = \l g_i,\ h_i,\ U_i\r$ and $\l G_{ij}\r + \l G_{i\ell}\r =\l 1\r$ if $j\not=\ell$.\\

\noindent  Then a call to $\isol$ has the effect of `` projecting'' $T_i$ along the $(h_{ij1})_j$s.
We have $V_{ij}= V_{ij-1}'^{(h_{ij1})}$ where $ (\, V_{ij},\ V_{ij}' \, ) \leftarrow \isol(V_{ij-1}',\ h_{ij1})$, with $V_{i0}'=T_i$ (Line~\ref{SToGBA:0}).
The specifications of Algorithm~\ref{algo:isol} ``$\isol$'' provides $V_{ij} V_{ij}' = V_{ij-1}'$, the product being that of coprime polynomials.
By induction we get:
\begin{equation}\label{eq:spade}
T_i = V_{i0}' = V_{i1} V_{i1}' = V_{i1} V_{i2} V_{i2}' = \cdots = V_{i1} V_{i2} \cdots V_{ij} V_{ij}', \quad \text{for all}\ \  j\ge1.
\end{equation}
Moreover the factors in the product are pairwise coprime.
It follows that 
$$
V_{ij} = V_{ij-1}'^{(h_{ij1})} = \prod_{p\in \calP, v_p(h_{ij1})>0} p^{v_p(V_{ij-1}')}=
\prod_{p\in \calP, v_p(h_{ij1})>0} p^{v_p(T_i)}=T_i^{(h_{ij1})}.
$$
Since the $(h_{ij1})_j$s are pairwise coprime,  
and that $\prod_j \sqfp(h_{ij1}) = \sqfp(U_i)$, each irreducible factor of $U_i$ appears  in those of one (and only one) of $h_{ij1}$.
Moreover, $U_i\not=1$  by Remark~\ref{rmk:LNLD5=1}, hence $\sqfp(U_i)=\sqfp(T_i)$ by the output' specifications of Algorithm~\ref{algo:LNLD5}
``$\LNND5$''. Therefore 
\begin{equation}\label{eq:Vij}
\prod_j V_{ij} = \prod_j T_i^{(h_{ij1})} =
\prod_j \prod_{p\in \calP,\ v_p(h_{ij1})>0} \negthickspace \negthickspace\negthickspace 
p^{v_p(T_i)} = \prod_{p \in \calP,\ v_p(U_i)>0} \negthickspace \negthickspace\negthickspace  p^{v_p(T_i)}
=\prod_{p \in \calP,\ v_p(T_i)>0} \negthickspace \negthickspace\negthickspace  p^{v_p(T_i)} = T_i.
\end{equation}
At Line~\ref{SToGBA:switch}, $G_{ij}^{new} := V_{ij} G_{ij} ~\cat~[g_i]$, which rewrites using the notation above:
$$
G_{ij}^{new}:=[  h_{ij1} V_{ij},\ h_{ij2} V_{ij} f_{ij2},\ \ldots,\ h_{ij \lambda-1} V_{ij} f_{ij \lambda-1} ,\ V_{ij} f_{ij\lambda},\ g_i].
$$
Since $g_i\in \l G_{ij}\r$, Theorem~\ref{th:Laz}-(B) implies that $G_{ij}^{new}$ is a lexGb.
The following lemma~\ref{lem:diamond} shows that it is minimal.

\begin{Lem}\label{lem:diamond}
  The lexGb  $G_{ij}^{new}$ is a minimal one.
\end{Lem}
\begin{proof}
According to Theorem~\ref{th:Laz}-(B), it suffices to prove
that $\deg_y(g_i)$ is larger than the degree (in $y$) of all the polynomials in $G_{ij}$.
Consider the first recursive call at Line~\ref{SToGBA:rec}. Recall that $g_i\not=1$ and $h_i\not=0$.
If $h_i$ is a constant (Line~\ref{SToGBA:endab}) then  the recursive call outputs $ [[1]]$
and the minimality of $G_{ij}$ is obvious.
Else it goes to another call to $\LNND5$ at Line~\ref{SToGBA:LNL}.
$$
[[g_{i\ell},\ h_{i\ell}, \ U_{i\ell},\ T_{i\ell}]_\ell] \leftarrow \LNND5(g_i,\ h_i,\ U_i).
$$
According to the degree condition of Proposition~\ref{prop:LNLD5}, an equality $\deg_y(g_{i\ell}) = \deg_y(g_i)$
can only occur if $\deg_y(h_i)=\deg_y(g_i)$, which does not hold as seen above.
\end {proof}

This proves one output's specification.
According to the specification~(ii') above, we obtain $\sqfp(h_{ij1} V_{ij}) =\cdots =\sqfp(h_{ij\lambda-1} V_{ij})$.
Moreover, all irreducible factors of $V_{ij}=T_i^{(h_{ij1})}$ are irreducible factors of $h_{ij1}$ by definition.
We obtain that $\sqfp(h_{ij\ell} V_{ij})=\sqfp(V_{ij})$ for $1\le \ell\le \lambda-1$. This proves Specification~(ii).

Additionally, $\prod_i \prod_j h_{ij1} V_{ij} | \prod_i U_i T_i | T$ proving the second statement of Specification~(i).
The first statement is that the $(h_{ij1} V_{ij})_j$s are pairwise coprime. This follows from Eq.~\eqref{eq:spade} and the specification~(i').

Besides, $\prod_j \l G_{ij}^{new}\r= \prod_j (\l V_{ij} G_{ij}\r + \l g_i\r)$. Lemma~\ref{lem:id} in Appendix,
proves that $\prod_j \l G_{ij}^{new}\r = (\prod_j \l V_{ij} G_{ij}\r ) + \l g_i\r$.
This ideal is equal to $(\prod_j V_{ij})(\prod_j \l G_{ij}\r) + \l g_i\r$. By the specification~(iii') and Eq.~\eqref{eq:Vij} above,
we get: $\prod_j \l G_{ij}^{new}\r=T_i \l g_i,\ h_i,\ U_i\r + \l g_i\r = \l T_i g_i,\ T_i h_i, T_i U_i,\ g_i\r = \l g_i,\ T_i h_i,\ T_i U_i \r$.
Finally, with Eq.~\eqref{eq:gi=a} and the Chinese remainder theorem we obtain:
$$
\l a,\ b,\ T\r\simeq \prod_i \l a,\ b,\ U_i T_i\r = \prod_i \l g_i,\ T_i h_i,\ U_i T_i\r = \prod_{i,\ g_i\not=1,\ h_i=0} \l g_i,\ U_i T_i\r \cdot \prod_{i,\ g_i\not=1,\ h_i\not=0} \prod_j \l G_{ij}^{new}\r
$$
which is output's specification~(iii).
That they are pairwise coprime follows from the fact that the ideals $(\l g_i,\ T_i h_i,\ T_i U_i\r)_i$
are pairwise coprime.
\end{proof}

Next the assumption that $a$ has an invertible leading coefficient modulo $T$ is lifted.
It suffices to run the algorithm~\ref{algo:MFD5} ``$\MFD5(a,\ T)$'' (Line~\ref{SToGBD5:a}),
and naively to call the previous algorithm~\ref{algo:SToGBA} ``$\SToGBA$'' on each component of that monic form.
But this induces a minor issue, in that the monic forms $(a_j)_j$ of $a$ may not all
satisfy $\deg_y(b) \le \deg_y(a_j)$, a requirement for  calling ``$\SToGBA$''.
The if-test Line~\ref{SToGBD5:if} distinguishes the $j$s
for which $\deg_y(a_j) < \deg_y(b)$ from those $j$s for which $\deg_y(b)\le \deg_y(a_j)$.
In the former case (Lines~\ref{SToGBD5:else}-\ref{SToGBD5:out2}), calling
directly $\SToGBA(b_j,\ a_j,\ T_j)$ does not necessarily work
because $b_j\equiv b \bmod T_j$  may not have an invertible leading coefficient modulo $T_j$
--- another requirement of ``$\SToGBA$''.
Thus we make $b_j$ monic  first (Line~\ref{SToGBD5:Mbj}). Then
a last if-test (Line~\ref{SToGBD5:if2})  distinguishes the cases $\deg_y(a_{ij}) < \deg_y(b_{ij})$
from the cases $\deg_y(a_{ij}) \ge \deg_y(b_{ij})$, before this time calling $\SToGBA$
with all its specifications guaranteed.

\begin{algorithm}[!h]
\SetAlgoVlined 
 \DontPrintSemicolon 
 \KwIn{ 1-2. polynomials $a ,\ b\in k[x,y]$, $\deg_y(a)\ge \deg_y(b)$ satisfying Assumption~\eqref{tag:H}.
   
 3. $T\in k[x]$ is a monic non-constant polynomial.}

\KwOut{Family of minimal lexGbs $\bfG=(\calG_i)_i$ written following Theorem~\ref{th:Laz} as in Eq.~\eqref{SToGBD5:specif},
  and verifying the specifications (i)-(ii)-(iii) of Theorem~\ref{th:SToGBD5}.
  
 \vspace{-2ex}
  \begin{equation}\label{SToGBD5:specif}
    \calG_i = [h_{i1},\ h_{i2} f_{i2},\ \ldots,\ h_{i \mu-1} f_{i \mu -1},\ f_{i \mu}],\quad h_{ij} \in k[x],\ f_{ij} \in  k[x,y] {\rm~monic~in~} y,
  \end{equation}
}

\BlankLine

\If{$a$ or $b$ is a non-zero  constant}{\Return{$[[1]]$}\nllabel{SToGBD5:endab}}

$\out\leftarrow []$\;

$\bMa \leftarrow \MFD5(a,\ T)$ \nllabel{SToGBD5:a} \tcp*[r]{Write $\bMa=[[a_j,\ T_j,\ 1]_j]$}
\For
    {$[a_j,\ T_j,\ 1]\in \bMa$\nllabel{SToGBD5:for1}}{
      $b_j\leftarrow b \bmod T_j$\;
      \eIf(\nllabel{SToGBD5:if}){$\deg_y(a_j)\ge \deg_y(b_j)$}
          {
            $\out \leftarrow \out \cat \SToGBA(a_j,\ b_j,\ T_j)$\nllabel{SToGBD5:out}\;
          }(\nllabel{SToGBD5:else})
          {
            $\bMb_j  \leftarrow\MFD5(b_j,\ T_j)$ \nllabel{SToGBD5:Mbj} \tcp*[r]{Write $\bMb_j=[[b_{ij},\ T_{ij},\ 1]_i]$}
            \For
                {$[b_{ij},\ T_{ij},\ 1]\in \bMb_j$}{
                  $a_{ij} \leftarrow a_{j} \bmod T_{ij}$\;
                  \eIf(\nllabel{SToGBD5:if2}){$\deg_y(a_{ij}) \ge \deg_y(b_{ij})$}
                      {
                        $\out \leftarrow \out \cat \SToGBA(a_{ij},\ b_{ij},\ T_{ij})$\nllabel{SToGBD5:out1}\;
                      }
                      {
                        $\out \leftarrow \out \cat \SToGBA(b_{ij},\ a_{ij},\ T_{ij})$\nllabel{SToGBD5:out2}\;
                      }
                      
                }
                
          }
    }
 \If{$\out =[]$}{\Return{$[[1]]$}}
 \Return{$\out$}
\caption{\label{algo:SToGBD5} \quad $\bfG \leftarrow\SToGBD5(a,\ b,\ T)$ }
\end{algorithm}

\begin{Theo}[Correctness of Algorithm~\ref{algo:SToGBD5} ``$\SToGBD5$'']\label{th:SToGBD5}
The  output $\out$ of $\SToGBD5(a,\ b,\ T)$  verifies the specifications below.
  \begin{enumerate}[{\em (i)}]
  \item $h_{i1}$ and $h_{j1}$ are coprime.
    Additionally,  $\prod_i h_{i1} | T$ and $\prod_i \sqfp(h_{i1}) = \sqfp(T)$.
    \item $\sqfp(h_{i1})=\sqfp(h_{i2})=\cdots =\sqfp(h_{i \mu-1})$.
    \item   $\prod_i \l \calG_i\r = \l a,\ b,\ T\r$ and $\l \calG_i\r + \l \calG_j\r =\l 1\r$ for $i\not=j$.
\end{enumerate}
\end{Theo}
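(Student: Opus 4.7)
The plan is to reduce the correctness of $\SToGBD5$ to that of $\SToGBA$ (Proposition~\ref{prop:SToGBA}) together with that of $\MFD5$ (Proposition~\ref{prop:MFD5}), using the Chinese remainder theorem to aggregate the output specifications across the splittings produced by the two $\MFD5$ calls. The base case at Line~\ref{SToGBD5:endab} is immediate: if $a$ or $b$ is an invertible constant then $\l a,b,T\r = \l 1\r$ and $[[1]]$ satisfies (i)-(ii)-(iii) vacuously.

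The second step is to justify the pseudo-code comment that writes $\bMa = [[a_j,T_j,1]_j]$, i.e.\ to argue that every triple output by $\MFD5(a,T)$ has third component $1$. By Proposition~\ref{prop:MFD5}, a third component $U_i\ne 1$ arises only when inside the embedded call to $\WFD5$ some branch has $d_i=-1$, that is, when $a$ is nilpotent modulo the corresponding $T_i$. By Assumption~\eqref{tag:H}, $a$ is not nilpotent modulo any primary factor of $T$, hence not nilpotent modulo any divisor $T_i$ of $T$ whose primary factors are among those of $T$. Thus every $U_i=1$, the $T_j$ are pairwise coprime with $\prod_j T_j = T$, each $a_j$ is monic in $y$, and $\prod_j\l a_j,T_j\r \simeq \l a,T\r$, whence $\l a,b,T_j\r = \l a_j,b_j,T_j\r$ for $b_j = b\bmod T_j$. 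The same argument applied to the inner $\MFD5(b_j,T_j)$ call (Line~\ref{SToGBD5:Mbj}) shows that every inner triple also has third component $1$, with $\prod_i T_{ij}=T_j$ pairwise coprime and each $b_{ij}$ monic.

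Next I would verify that the preconditions of $\SToGBA$ hold at each of the three calls (Lines~\ref{SToGBD5:out},~\ref{SToGBD5:out1},~\ref{SToGBD5:out2}). In all three cases both arguments are monic in $y$ (either as output of $\MFD5$, or because reduction modulo a divisor preserves monicity since the leading coefficient is $1$), the first argument dominates in $y$-degree (this is exactly what the tests at Lines~\ref{SToGBD5:if} and~\ref{SToGBD5:if2} together with the swap at Line~\ref{SToGBD5:out2} enforce), and the modulus $T_j$ or $T_{ij}$ divides $T$. Proposition~\ref{prop:SToGBA} therefore applies, returning a family of minimal lexGbs whose product of ideals equals $\l a_j,b_j,T_j\r = \l a,b,T_j\r$ in Case A, respectively $\l a_{ij},b_{ij},T_{ij}\r = \l a,b,T_{ij}\r$ in Case B, together with local versions of specifications (i)-(ii).

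Finally I would aggregate these local specifications via the Chinese remainder theorem. Since in Case A the moduli $T_j$ form a pairwise coprime family with $\prod_j T_j = T$, and in Case B the refined moduli $T_{ij}$ form a pairwise coprime family with $\prod_i T_{ij} = T_j$, the full collection of moduli used across all $\SToGBA$ calls is pairwise coprime with product $T$. Specification (iii) then follows from $\prod \l a,b,T_\star\r \simeq \l a,b,T\r$, and the pairwise coprimality of the $\l \calG_i\r$'s from distinct branches is automatic since their head polynomials $h_{i1}$ divide coprime moduli. Specification (ii) is strictly local to each $\SToGBA$ output and transfers verbatim. The main bookkeeping hurdle, and the step I expect to be most delicate, is specification (i) --- in particular the equality $\prod_i \sqfp(h_{i1}) = \sqfp(T)$ --- which requires combining the inner equalities $\prod_k \sqfp(h_{k1})=\sqfp(T_\star)$ supplied by $\SToGBA$ with the multiplicativity $\prod \sqfp(T_\star)=\sqfp(T)$ stemming from the pairwise coprimality of the chosen moduli.
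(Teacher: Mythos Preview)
Your proposal is correct and follows essentially the same approach as the paper: handle the trivial base case, use Assumption~\eqref{tag:H} to force every third component of the $\MFD5$ outputs to be $1$, verify the input specifications of $\SToGBA$ at each call via the degree tests and swap, and then aggregate the local outputs of Proposition~\ref{prop:SToGBA} over the pairwise coprime moduli. Your reduction of each branch directly to $\l a,b,T_\star\r$ before applying the Chinese remainder theorem is in fact slightly more streamlined than the paper, which routes the inner product $\prod_i \l a_j,b_{ij},T_{ij}\r$ through Lemma~\ref{lem:id} first; both arrive at the same conclusion, and your identification of the $\prod_i \sqfp(h_{i1}) = \sqfp(T)$ bookkeeping as the most delicate point matches the paper's treatment.
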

\begin{proof}
If $a$  or $b$  is a non-zero constant, then $\l a,\ b,\ T\r =\l 1\r$ 
so that the output $[[1]]$ at Line~\ref{SToGBD5:endab} is correct.

By Assumption~\eqref{tag:H},
for each primary factor $p^e$ of $T$, $a \bmod p^e$ is not nilpotent.
Therefore, when ``making'' $a$ monic with $\MFD5(a,\ T)$, the collection of output
is always of the form $[a_j,\ T_j,\ 1]$. The third component is indeed a polynomial $U_i\not=1$
if and only if $U_i$ divides $a$ modulo $T_i$. Since $\sqfp(U_i)=\sqfp(T_i)$,
this would imply that  $a$ is nilpotent modulo $T_i$, in contradiction with Assumption~\eqref{tag:H}.
 
The output $\bMa =[[a_j,\ T_j, 1]_j]$ of $\MFD5(a,\ T)$  at Line~\ref{SToGBD5:a},
verify $\l a_j,\ T_j\r = \l a,\ T_j\r$ with $a_j$ monic and $\prod_j T_j = T$,
the $(T_j)_j$s being pairwise coprime.
The algorithm treats  each component $T_j$ of $T$ separately
in the for loop Line~\ref{SToGBD5:for1}-\ref{SToGBD5:out2}.

Naively, a call to ``$\SToGBA$'' shall be performed
with $(a_j,\ b_j,\ T_j)$, but this input does not necessarily satisfy the specifications
of that algorithm if $\deg_y(a_j) < \deg_y(b_j)$.
In that case (Lines~\ref{SToGBD5:else}-\ref{SToGBD5:out2}), we cannot simply switch $a_j$ with $b_j$
by calling directly $\SToGBA(b_j,\ a_j,\ T_j)$ neither 
since $b_j$ may not have a leading coefficient  invertible modulo $T_j$ (a requirement of that algorithm).
Therefore we ``make'' $b_j$  monic with a call to $\MFD5(b_j,\ T_j)$ at Line~\ref{SToGBD5:Mbj}.
Since $b$ satisfies Assumption~\eqref{tag:H}, $b\bmod p^e$ is not nilpotent for
all primary factor $p^e$ of $T$. Therefore,  $\bMb_j=[[b_{ij},\ T_{ij},\ 1]_i]$ (third component
is always 1).
$$
b_{ij}{\rm~monic}\qquad
\l b_{ij},\ T_{ij}\r = \l b_j,\ T_{ij}\r,\qquad \prod\nolimits_i T_{ij} = T_j,
\quad (T_{ij})_i{\rm s~pairwise~coprime}.
$$
Now both families $(a_{ij})$ and $(b_{ij})$ are made of polynomials having an invertible coefficient
modulo $T_{ij}$,
hence calls to $\SToGBA$ at Line~\ref{SToGBD5:out1} or Line~\ref{SToGBD5:out2}
are correct.
$\out$ concatenates all these minimal lexGbs. This leads to the following equalities, where the second one
is deduced from Lemma~\ref{lem:id} in Appendix.
$$
\prod_{\calG \in \out} \l \calG\r \simeq \prod_j \prod_i \l a_{j},\ b_{ij},\ T_{ij}\r 
\simeq
\prod_j (\l a_j\r + \prod_i \l b_{ij},\ T_{ij}\r)
\simeq \prod_j \l a_j,\ b_j,\ T_j\r = \l a,\ b,\ T\r. 
$$
This proves the first statement of the specification (iii), the second statement being  that these lexGbs are pairwise coprime.
Let $\calG_1$ and $\calG_2$  be two different lexGbs in $\out$.
If $\calG_1$ and $\calG_2$ are in the same output of a call to ``$\SToGBA$'' 
at Line~\ref{SToGBD5:out},~\ref{SToGBD5:out1} or~\ref{SToGBD5:out2},
then the output' specifications of ``$\SToGBA$'' guarantee that 
 $\calG_1$ and $\calG_2$ are coprime.

If $\calG_1$ and $\calG_2$ are obtained from different calls to ``$\SToGBA$'',
then the $(T_j)_j$s (Line~\ref{SToGBD5:out}) are pairwise coprime,
or the $(T_{ij})_{ij}$s (Lines~\ref{SToGBD5:out1} or~\ref{SToGBD5:out2}) are
pairwise coprime, so that $\calG_1$ and $\calG_2$ are coprime. This ends the proof of the specification (iii).

Finally, the specifications (i)-(ii) are also satisfied according that the lexGbs are computed by calls to ``$\SToGBA$''
whose output satisfy these two specifications. 
\end{proof}

\section{Implementation and experimental section}\label{sec:exp}
All algorithms are implemented in Magma v2.25-1 and can be found at
 \url{http://xdahan.sakura.ne.jp/gb.html}.
Experiments were realized on an Intel processor Core i7-6700K clocked at 4GHz.
The timings naturally compare with that of the {\tt GroebnerBasis} command.
We recall that a common strategy to compute a lexGb divides in two steps:
first compute a Gr\"obner basis for the degree reverse lexicographic order ({\tt grevlex} for short)
then applies a change of monomial order algorithm, typically FGLM since we are constrained to systems
of dimension zero.
Magma is equipped with one of the best implementation of Faug\`ere algorithm F4~\cite{F4},
which is called in the first step.
We report on  the timing F4$+$FGLM, with the timing of
FGLM appended inside parentheses when not negligible.

We consider only polynomials over a finite field. Rational coefficients induce large coefficients swell,
and without modular methods become difficult to compare. We could have turned the {\tt Modular} option off
in Magma when calling a Gr\"obner basis computation, but for now we would rather stick to
finite fields. The comparison with small size (16bits) and medium size (64bits)
finite fields bring enough evidences of the advantage of the proposed algorithms.

\subsection{Testing suite\label{sec:testing}}
We consider two families of examples, each tested over a 16-bits finite field and a 64-bits finite field.
The polynomials $a,\ b$ are defined as follows:

\begin{itemize}[left=0pt]
\item First family (Tables~\ref{tab:data1}-\ref{tab:2-15bits}-\ref{tab:2-63bits}): 16 polynomials  $a,b$  are constructed with respect to a modulus $T$ whose factors
  are defined as follows:

  \begin{itemize}[left=0pt]
  \item Consider $T = p_1^{e_1} p_2^{e_2}$ (two factors where $p_1 = x$ and $p_2=(x+1)$),
    
$T = p_1^{e_1} p_2^{e_2} p_3^{e_3} $ (three factors, where
    $p_1 = (x+10),\ \  p_2=(x+20), \ \ p_3=(x+30) $), and finally
    
   $T=p_1^{e_1} p_2^{e_2} p_3^{e_3} p_4^{e_4}$ (four factors,
where $p_1 = x,\ \  p_2=(x+5),\ \  p_3=(x+10),\ \ p_4=(x+15) $).

\item The exponents are respectively 

(examples 1-4) $(e_1, e_2) =(5 i,\ 5(i+1))_{i=1,2,3,4}$ when there are two factors,

(examples 5-11) $(e_1, e_2, e_3) = (3i-1,\ 3i,\ 3i+1)_{i=3,4,5,6,7,8,9}$, when there are three factors,

(examples 12-16) $(e_1, e_2, e_3, e_4) = (4 i,\ 4 i+1,\ 4i+2,\ 4i+3)_{i=1,2,3,4,5}$, when there are four factors.

\item For each $p_i^{e_i}$ polynomials $a_i, b_i$ 
are built as follows:

$a_i := (y+p_i)\cdot \prod_{\ell=1}^{e_i-1} (y+p_i + p_i^2 + \cdots + p_i^\ell + \ell+ 2 p_i^{\ell+1}) \bmod p_i^{e_i}$

$b_i := (y+ 2 p_i)\cdot \prod_{\ell=1}^{e_i-1} (y+p_i + p_i^2 + \cdots + p_i^\ell + \ell+  p_i^{\ell+1}) \bmod p_i^{e_i}$

\item Then the CRT is applied to construct polynomials $a,b$ modulo $T$
from their local images $a_i,b_i$.

\end{itemize}

\item Second family (Tables~\ref{tab:data2}-\ref{tab:mul-15bits}-\ref{tab:mul-63bits}): 6 polynomials $a,b$ constructed according to a modulus $T$ whose factors are defined as follows:

  \begin{itemize}[left=0pt]
    \item Consider $T= q_7^{e} q_6^{e-1}\cdots q_1^{e-6}$ where each polynomial
$q_i=r_{i1}\cdots r_{i(8-i)}$ and $r_{ij} =x + i(i+1)/2 + j-2$.
For example $r_{71}=x+27$ and $r_{1j}=x+1+j-2=x+j-1$. 

\item $e = 7,8,9,10,11,12$.

\item For each $r_{ij}^{e-i}$ 
polynomials  $a_{ij}, b_{ij}$ 
are built as follows:

$a_{ij} :=  (y+r_{ij})\cdot \prod_{\ell=1}^{e - i} (y+r_{ij} + r_{ij}^2 + \cdots + r_{ij}^\ell + \ell+ 2 r_{ij}^{\ell+1})
\bmod r_{ij}^{e-i} $

$b_{ij} :=  (y+2 r_{ij})\cdot \prod_{\ell=1}^{e-i} (y+r_{ij} + r_{ij}^2 + \cdots + r_{ij}^\ell + \ell+  r_{ij}^{\ell+1}) \bmod r_{ij}^{e-i}$

\item Then the CRT is applied to construct polynomials $a,b$ modulo $T$
from their local images $a_{ij},b_{ij}$.
\end{itemize}
\end{itemize}
The first family concerns a modulus $T$ with few factors of moderate
 exponent degrees, each exponent degrees being moderately distant.
The second family targets a modulus $T$ having 27 factors,
and a squarefree decomposition made of seven factors.
The exponent degrees are distant of one, and are relatively small.

The correctness of the  output of Algorithm~\ref{algo:SToGBD5} ``$\SToGBD5$'' was
checked as follows:
Take $\calG$ the lexGb output by the Gr\"obner engine of Magma, and $(\calG_i)_i$ the family
of lexGbs output by Algorithm~\ref{algo:SToGBD5}.
We checked that each polynomial in $\calG$ reduces to zero modulo each $\calG_i$. In this way
$\calG \subset \prod_i \l \calG_i\r$. And compared the dimension of the two vector spaces
$k[x,y]/\l \calG\r$ and $\prod_i k[x,y]/\l \calG_i\r$.

The two tables~\ref{tab:data1}-\ref{tab:data2} summarize data attached to the system $(a,\ b)$ (without a  modulus $T$, which will turn out
to be $\Res_y(a,b)$, see next Section~\ref{sec:time}).
This kind of input is of practical importance and the timings are not always favorable to
Algorithm~\ref{algo:SToGBD5}, which require a special analysis, undertaken in
Section~\ref{sec:compl}.
\medskip

\noindent \underline{Description of Tables~\ref{tab:data1}-\ref{tab:data2}}
\begin{enumerate}[Column 1:, left=0pt]\itemsep0pt
\item refers to the example's number.\ ($4+7+5=16$ examples for the 1st family, and $6$ examples for the 2nd family).
\item degree $\DEG$ of the ideal generated by $\l a,\ b\r$, equal to $\dim_k(k[x,y]/\l a,\ b\r)$.
\item degrees  $\deg_y(a)$ and $\deg_y(b)$ of the input polynomials $a$ and $b$ (always equal).
\item total degrees $\tdeg(a)$, $\tdeg(b)$ of the input polynomials $a$ and $b$ (always equal).
\item $\deg(\Res_y(a,b))\in k[x]$  degree of the resultant of $a$ and $b$.
\item sum of the degrees of the  factors having multiplicity one in the factorization of  $\Res_y(a,b)$.
\item sum of the degrees of the factors having multiplicity $>1$ in the factorization of  $\Res_y(a,b)$.
\item average multiplicity of an irreducible factor of $\Res_y(a,b)$ having multiplicity $>1$ (do not count the factor of multiplicity one).
\item number of lexGbs output by Algorithm~\ref{algo:SToGBD5}
``$\SToGBD5''$. Inside parentheses, the average number of polynomials in these lexGbs.
\end{enumerate}

\begin{table}
\begin{small}
  \begin{tabular}{|c||ccc|cccc|c|}\hline
    \begin{tabular}{c}ex.\\ nbr. \end{tabular} & \multicolumn{1}{c|}{\!\!\!\!$\DEG$\!\!\!\!} &
    \multicolumn{1}{c|}{ \!\!\!\!\!\!\!\! $\begin{array}{c}\deg_y(a)\\=\deg_y(b)\end{array}$ \!\!\!\!\!\!\!\!}&
    \multicolumn{1}{c||}{\!\!\!\!\!\!\!\! $\begin{array}{c}\tdeg(a)\\=\tdeg(b)\end{array}$ \!\!\!\!\!\!\!\!} &
\multicolumn{1}{c|}{\kern-1em    \begin{tabular}{c}degree\\ resultant\end{tabular} \kern-1em} & \multicolumn{1}{c|}{\kern-1em   \begin{tabular}{c}deg. multi-\\plicity 1 \end{tabular} \kern-1em}  & \multicolumn{1}{c|}{\kern-1em 
    \begin{tabular}{c}deg. multi-\\plicity  $>$1\end{tabular} \kern-1em} & \multicolumn{1}{c||}{\kern-1em  \begin{tabular}{c}avg. multi-\\plicity $>$1\end{tabular} \kern-1em} & \kern-1em\begin{tabular}{c} \#lexGbs\\ (avg. \#poly.)\end{tabular}\kern-1em \\\hline
1 & 241 & 10 & 24 &  266 & 171 &  95 &    47.5 & 2 (8.5)  \\
  2 & 646 & 15 & 39 &  696  & 471 &  225 &        112.5 & 2 (14.5)\\
  3 & 1251 & 20 & 54 & 1326  & 919 &  407 &          135.67 & 2  (18.5) \\
  4 & 2056 & 25 &69 &  2165 & 1521 &  635 &   317.5 & 2 (23.5)\\ \hline\hline
  5 & 281 & 8 & 28 & 300 &  196 &  104 & 34.67 & 3 (8) \\
  6 & 581 & 11 & 40 & 609  & 415 &  194 & 64.67 & 3 (11)  \\
  7 & 989 & 14 & 52 &  1026 & 715 & 311 & 103.67 & 3 (14) \\
  8 & 1505 &17 &  64 & 1551  & 1096 & 455 & 151.67 & 3 (17)\\
  9 & 2129 & 20 & 76 &  2184  & 1558 & 626 &  208.67 & 3 (20)\\
  10 & 2861 & 23 & 88 & 2925  & 2101 & 824 & 274.67 & 3 (23) \\
  11 & 3701 & 26 & 100 &  3774 & 2725 & 1049 & 349.67 & 3 (26) \\ \hline\hline
  12 & 275 & 7 & 28 &  273 & 171 & 102 & 25.5 & 4 (6.5) \\
  13 & 725 & 11 & 48 & 777  & 523 & 254 & 63.5 & 4 (10.5) \\
  14 & 1461 & 15 & 68 &  1537  & 1067 & 470 & 117.5 & 4 (14.5)\\
  15 & 2453 & 19 & 88 & 2553  & 1803 & 750 & 187.5 & 4 (18.5)\\
  16 & 3701 & 23 & 108 & 3825  & 2731 & 1094 & 273.5 & 4 (22.5) \\\hline
  \end{tabular}
\end{small}
\caption{Data attached to the polynomials $a,\ b$ in the 1st family of examples}\label{tab:data1}
\end{table}

\begin{table}
  \begin{small}
    \begin{tabular}{|c||ccc|cccc|c|}\hline
    \begin{tabular}{c}ex.\\ nbr. \end{tabular} & \multicolumn{1}{c|}{\!\!\!\!$\DEG$\!\!\!\!} &
    \multicolumn{1}{c|}{ \!\!\!\!\!\!\!\! $\begin{array}{c}\deg_y(a)\\=\deg_y(b)\end{array}$ \!\!\!\!\!\!\!\!}&
    \multicolumn{1}{c||}{\!\!\!\!\!\!\!\! $\begin{array}{c}\tdeg(a)\\=\tdeg(b)\end{array}$ \!\!\!\!\!\!\!\!} &
\multicolumn{1}{c|}{\kern-1em    \begin{tabular}{c} degree\\ resultant\end{tabular} \kern-1em} & \multicolumn{1}{c|}{\kern-1em   \begin{tabular}{c}deg. multi-\\plicity 1 \end{tabular} \kern-1em}  & \multicolumn{1}{c|}{\kern-1em 
    \begin{tabular}{c}deg. multi-\\plicity  $>$1\end{tabular} \kern-1em} & \multicolumn{1}{c||}{\kern-1em  \begin{tabular}{c}avg. multi-\\plicity $>$1\end{tabular} \kern-1em} & \kern-1em\begin{tabular}{c} \#lexGbs\\ (avg. \#poly.)\end{tabular}\kern-1em \\\hline
    1 & 827 & 7 & 90 &  1079  & 617 & 462 &   6.5  & 7 (5) \\
    2 & 1301 & 8 & 119 &  1665 & 979 & 686 &  24.5 & 7 (6)\\
    3 & 1887 & 9 & 148 &  2363 & 1425 & 938 & 33.5 & 7 (7)\\
    4 & 2585 & 10 & 177 &  3173 & 1955 &  1218 &  43.5 & 7 (8) \\
    5 & 3395 & 11 & 206 &  4095  & 2569 & 1526 &  54.5 & 7 (9)\\
    6 & 4317 & 12 & 235 &  5129 & 3267 &  1862 &  66.5 & 7 (10)\\\hline
    \end{tabular}
    \end{small}
  \caption{Data attached to the polynomials $a,\ b$ in the 2nd family of examples}\label{tab:data2}
\end{table}

\subsection{Timings\label{sec:time}}
Tables~\ref{tab:2-15bits}-\ref{tab:2-63bits} display timings for the first family of 16 polynomials
described in Section~\ref{sec:testing}
(see also Table~\ref{tab:data1}). The double horizontal lines separate the four polynomials $T$ which have two primary factors,
from the seven polynomials $T$ that have three, and from the last five polynomials $T$ which have four primary factors.
Tables~\ref{tab:mul-15bits}-\ref{tab:mul-63bits} show the  timings for the second family of 6 polynomials
described in Section~\ref{sec:testing}
(see also Table~\ref{tab:data2}).
We remark that the polynomials $a$ and $b$ do not have solutions at infinity:
 $\gcd(\lc(a),\lc(b))=1$.
The four tables all have eight columns. They represent:
\medskip

\noindent \underline{Description of Tables~\ref{tab:2-15bits}-\ref{tab:2-63bits}-\ref{tab:mul-15bits}-\ref{tab:mul-63bits}}
\begin{enumerate}[Column 1:, left=0pt]\itemsep -2pt
\item  timing of the algorithm~\ref{algo:SToGBD5} ``$\SToGBD5$'' with input $a,\ b,\ T$.

\item  timing of {\tt GroebnerBasis}$([a,b,T])$ command in Magma: F4$+$FGLM with FGLM in parenthesis when not negligible.

\item  timing for the resultant computation $r(x):=\Res_y(a,b)$


\item  timing for Algorithm~\ref{algo:SToGBD5} ``$\SToGBD5$'' with input $a,\ b,\ r$.

\item  timing of {\tt GroebnerBasis}$([a,b])$ command in Magma: F4$+$FGLM with FGLM in parenthesis when not negligible.

\item  timing for the squarefree decomposition of $r=R_1^{e_1} \cdots R_s^{e_s}$. Here,
  $R_i$ is the product of those irreducible factors of the resultant $R$ that come with the same multiplicity $e_i$.

\item  total time taken for calling Algorithm~\ref{algo:SToGBD5} ``$\SToGBD5$'' on input $(a,\ b,\ R_i^{e_i})$ for $i=1,\ldots,s$.

\item  total time taken for calling {\tt GroebnerBasis}$([a,\ b,\ R_i^{e_i}])$  for $i=1,\ldots,s$.
\end{enumerate}

\begin{table}
\begin{small}
  \begin{tabular}{|c|c||c|c|c||c|c|c|}
\hline
    1 & 2 & 3 & 4 & 5 & 6 & 7 & 8 \\ \hline \hline
    \multicolumn{2}{|c||}{Input: $a,b,T$} & \multicolumn{2}{c|}{Input: $a,b,\Res_y(a,\ b)$}
    & Input: $a,b$ & \multicolumn{3}{|c|}{Several input: $a,b,R_i^{e_i}$}\\
    \cline{3-3}\cline{6-6}
 this & Gb(fglm) & res  & this & Gb(fglm) & sqf dec & this & Gb(fglm)\\ \hline \hline
 0.01 & 0 & 0 & 0.13 &  0.04 (0.029) & 0.01 & 0.02 & 0.13(0.02)\\ 
 0 & 0.03 & 0.02 & 2.32 &  0.33 (0.279) & 0 & 0.26 & 0.96(0.18)\\ 
 0.01 &  0.1 (0.009) & 0.04 & 12.45 &  1.8 (1.679) & 0.01 & 1.34 & 5.21(1.06)\\ 
 0.02 &  0.36 (0.05) & 0.11 & 48.62 &  6.28 (6) & 0.01 & 5.02 & 14.5(3.929\\ \hline\hline
 0 & 0.01 & 0.01 & 0.08 &  0.06 (0.03) & 0 & 0.02 & 0.2(0.02)\\ 
 0 & 0.03 & 0 & 0.69 &  0.26 (0.219) & 0.01 & 0.09 & 0.93(0.129)\\ 
 0.01 &  0.1 (0.02) & 0.03 & 2.69 &  0.96 (0.849) & 0.01 & 0.34 & 3.79(0.509)\\ 
 0.01 &  0.23 (0.039) & 0.06 & 8.78 &  2.87 (2.65) & 0.01 & 0.99 & 9.54(1.62)\\ 
 0.02 &  0.48 (0.079) & 0.1 & 23.58 &  6.96 (6.5) & 0.01 & 2.4 & 22.17(4.299)\\ 
 0.02 &  1.09 (0.17) & 0.19 & 55.05 &  15.44 (14.629) & 0.02 & 5.36 & 43.14(9.269)\\ 
 0.03 &  2.06(0.3) & 0.33 & 120.7 &  30.65(29.3) & 0.02 & 11.1 & 40.67(17.6)\\ \hline\hline
 0 & 0.01 & 0 & 0.04 &  0.04 (0.03) & 0 & 0.01 & 0.14(0.019)\\ 
 0.01 &  0.07 (0.01) & 0.02 & 0.94 &  0.48 (0.4) & 0 & 0.11 & 2.33(0.219)\\ 
 0.01 &  0.3 (0.039) & 0.05 & 5.76 &  2.84 (2.509) & 0.01 & 0.69 & 12.04(1.439)\\ 
 0.01 &  1.07 (0.139) & 0.14 & 23.15 &  10.69 (9.85) & 0.02 & 2.41 & 39.24(5.92)\\ 
 0.02 & 2.71(0.37) & 0.31 & 76.49 & 30.86(28.78) & 0.02 & 7.1 & 47.88(18.33)\\ \hline
  \end{tabular}
\end{small}
  \caption{First family over a finite field $GF(p)$, $p$ a 16bits prime}\label{tab:2-15bits}
\end{table}

\begin{table}
  \begin{small}
\begin{tabular}{|c|c||c|c|c||c|c|c|}
    \hline
    1 & 2 & 3 & 4 & 5 & 6 & 7 & 8 \\ \hline \hline
    \multicolumn{2}{|c||}{Input: $a,b,T$} & \multicolumn{2}{c|}{Input: $a,b,\Res_y(a,\ b)$}
    & Input: $a,b$ & \multicolumn{3}{|c|}{Several input: $a,b,R_i^{e_i}$}\\
    \cline{3-3}\cline{6-6}
    this & Gb(fglm) & res  & this & Gb(fglm) & sqf dec & this & Gb(fglm)\\ \hline \hline
 0.01 & 0.02 & 0.04 & 2.28 &  0.25 (0.22) & 0 & 0.28 & 0.44(0.13)\\ 
 0.02 &  0.22 (0.02) & 0.21 & 36.13 &  3.83 (3.649) & 0.02 & 3 & 5.21(2.23)\\ 
 0.04 &  1.23 (0.09) & 0.74 & 174.69 &  27.56 (26.65) & 0.05 & 17.36 & 32.49(18)\\ 
 0.09 &  6.17 (0.33) & 1.92 & 592.18 &  121.1 (118.009) & 0.08 & 73.91 & 143.42(78.74)\\ \hline\hline
 0.01 &  0.05 (0.009) & 0.05 & 1.28 &  0.39 (0.339) & 0 & 0.21 & 0.87(0.2)\\ 
 0.01 &  0.27 (0.019) & 0.17 & 10.16 &  3.87 (3.6) & 0.02 & 1.06 & 5.55(1.79)\\ 
 0.03 &  1.2 (0.109) & 0.47 & 36.48 &  20.48 (19.39) & 0.04 & 4.19 & 23.41(8.9)\\ 
 0.05 &  3.7 (0.339) & 1 & 107.99 &  60.39 (57.279) & 0.07 & 12.22 & 73.15(32.88)\\ 
 0.09 &  6.76 (0.74) & 2.06 & 277.45 &  169.11 (162) & 0.13 & 33.37 & 205.93(102)\\ 
 0.15 &  27.07 (2.039) & 3.45 & 717.37 &  386.3 (370.8) & 0.18 & 77.88 & 508.07(245)\\ 
 0.22 &  60.91 (4.5) & 5.72 & 1403.83 &  793.41 (763.609) & 0.27 & 166.48 & 939.93(513)\\ \hline\hline
 0.01 &  0.04 (0.01) & 0.04 & 0.65 &  0.29 (0.239) & 0.01 & 0.13 & 0.71(0.15)\\ 
 0.02 &  0.66 (0.059) & 0.26 & 14.41 &  7.54 (6.88) & 0.03 & 1.34 & 15.18(4.7)\\ 
 0.04 &  5.42 (0.399) & 0.98 & 83.5 &  62.34 (58.05) & 0.06 & 7.78 & 97.33(33.5)\\ 
 0.09 &  25.44 (1.489) & 2.59 & 296.32 &  273.9 (257.47) & 0.14 & 30.87 & 429.38(165)\\ 
 0.17 &  91.02 (6.909) & 7 & 943.93 &  907.33 (856.36) & 0.23 & 101.05 & 1383.59(624)\\ \hline
\end{tabular}
\end{small}
\caption{First family over a finite field $GF(p)$, $p$ a 64bits prime}\label{tab:2-63bits}
\end{table}

\begin{table}\begin{small}
    \begin{tabular}{|c|c||c|c|c||c|c|c|}
      \hline
    1 & 2 & 3 & 4 & 5 & 6 & 7 & 8 \\ \hline \hline
    \multicolumn{2}{|c||}{Input: $a,b,T$} & \multicolumn{2}{c|}{Input: $a,b,\Res_y(a,\ b)$}
    & Input: $a,b$ & \multicolumn{3}{|c|}{Several input: $a,b,R_i^{e_i}$}\\
    \cline{3-3}\cline{6-6}
 this & Gb(fglm) & res  & this & Gb(fglm) & sqf dec & this & Gb(fglm)\\ \hline \hline
 0 &  0.77(0.03) & 0.03 & 0.23 &  1.4(0.66) & 0 & 0.06 &  5.09(0.36)\\ 
 0 &  3.6(0.07) & 0.07 & 0.63 &  5.1(2.39) & 0.01 & 0.15 &  19.36 (1.289)\\ 
 0.01 &  9.84(0.119) & 0.13 & 1.53 &  13.08(5.969) & 0.01 & 0.34 &  50.39 (3.289)\\ 
 0.01 &  23.91(0.25) & 0.23 & 3.16 &  30.47(13.699) & 0.02 & 0.7 &  127.5 (7.5)\\ 
 0.01 &  60.21(0.509) & 0.37 & 6.32 &  64.96(28.7) & 0.02 & 1.28 &  268(15.9) \\ 
 0.02 &  115.72(0.97) & 0.57 & 11.37 &  134.31(54.509) & 0.03 & 2.28 &  527.95(27.8)\\ \hline
\end{tabular}
\end{small}
\caption{Second family over a finite field $GF(p)$, $p$ a 16bits prime}\label{tab:mul-15bits}
\end{table}

\begin{table}
  \begin{small}
    \begin{tabular}{|c|c||c|c|c||c|c|c|}
\hline
    1 & 2 & 3 & 4 & 5 & 6 & 7 & 8 \\ \hline \hline
    \multicolumn{2}{|c||}{Input: $a,b,T$} & \multicolumn{2}{c|}{Input: $a,b,\Res_y(a,\ b)$}
    & Input: $a,b$ & \multicolumn{3}{|c|}{Several input: $a,b,R_i^{e_i}$}\\
    \cline{3-3}\cline{6-6}
 this & Gb(fglm) & res  & this & Gb(fglm) & sqf dec & this & Gb(fglm)\\ \hline \hline
 0.01  &   11.79 (0.149)  &  0.54  &  3.04  &  18.05(10.05)  &  0.04  &  0.86  &   87.28 (5.5) \\ 
 0.02  &   90.69 (0.549)  &  1.29  &  8.27  &   91.21(43.519)  &  0.08  &  1.9  &   338.78 (20)\\ 
 0.04  &   297.74(1.23)  &  2.52  &  18.73  &   243.46(135)  &  0.13  &  3.94  &   808.97 (60) \\ 
 0.06  &   681.95(3.09)  &  4.44  &  37.28  &   560.26(326.289)  &  0.19  &  8.06  &   2186.52 (170)\\ 
 0.09  &   2402 (7.8)  &  7.76  &  68.63  &   1375.74(899.62)  &  0.27  &  15.39  &   5101.74 (300)\\ 
 0.13  &   5512.63(16.739)  &  12.67  &  134.43  &   2789(1909.229)  &  0.36  &  28.41  &   11789.98 (825)\\ \hline
\end{tabular}
\end{small}
\caption{Second family over a finite field $GF(p)$, $p$ a 64bits prime}\label{tab:mul-63bits}
\end{table}

\subsection{Comments
on Tables~\ref{tab:2-15bits}-\ref{tab:2-63bits}-\ref{tab:mul-15bits}-\ref{tab:mul-63bits}}
\paragraph{Columns~1-2}
The Gr\"obner engine is unable to take full advantage of the addition of $T$ in the system.
The comparison between the timings of the first two
columns is telling: up to several thousands times faster and always more than
one hundred times faster. 

\paragraph{Columns~6-7-8}
Taking the squarefree decomposition $R_1^{e_1}\cdots R_s^{e_s}$
of the resultant $r=\Res_y(a,b)$   is cheap and natural.
As one can expect, running Algorithm~\ref{algo:SToGBD5} on  each factor found 
brings a clear advantage in any case, even in comparison with Column~5 F4$+$FGLM.
This last experiment supports the conclusion that decomposing helps to alleviate
the growth of internal  computations.

\paragraph{Columns~3-4-5: preliminary comments}
This is related to the computation of a lexGb of a bivariate
system of two polynomials: the input is the system $[a,\ b]$.
Column 3 displays timings (always negligible) for computing the resultant $r$,
and then Column 4 displays the timing taken by Algorithm~\ref{algo:SToGBD5}
to find a product of lexGbs of $\l a,\ b,\ r\r$.
This timing compares with that of {\tt GroebnerBasis}$([a,b])$ of Column~5.
One question arises: is it faster to compute {\tt GroebnerBasis}$([a,b])$ or to compute
{\tt GroebnerBasis}$([a,b,r])$ ?
Experiments show that almost always the first is faster. That is why we have  shown the timings
of {\tt GroebnerBasis}$([a,b])$, rather than those
{\tt GroebnerBasis}$([a,b,r])$ which are slower.

We observe that the situation is more balanced in this case.
First the computation of $r$ is most often negligible. This suggests for future work to take advantage of its computation
through a subresultant p.r.s of $a,b$ (not modulo a polynomial $T$)
and work directly on it. 
Second, the timings of the first family of polynomials (Tables~\ref{tab:data1}-\ref{tab:2-15bits}-\ref{tab:2-63bits}
and rows~1-2-3 of Table~\ref{tab:DEG})
are slower in general, whereas those  of the second family 
(Tables~\ref{tab:data2}-\ref{tab:mul-15bits}-\ref{tab:mul-63bits}
and
row~4 of Table~\ref{tab:DEG})
are better.
To get an educated knowledge of this situation, which is of practical importance,
let us provide more details.

\subsection{Elements of complexity in case of  input $a$ and $b$}\label{sec:compl}
This section extends  the above comment on the timings of Columns~3-4-5 of Tables~\ref{tab:2-15bits}-\ref{tab:2-63bits}-\ref{tab:mul-15bits}-\ref{tab:mul-63bits} that concern an input $a$ and $b$ without a modulus $T$. For clarity, Table~\ref{tab:DEG}
displays these data into 8 plots.
Magma computes with F4~\cite{F4} a {\tt grevlex} Gr\"obner basis and then applies
FGLM~\cite{FGLM93} to transform it to a lexGb.
The complexity of the latter is well-known, it is $O(\DEG^3)$,
where $\DEG=\dim_k(R[y]/\l a,\ b \r)$ is the 
degree of the ideal generated by $a$ and $b$.
The complexity of the F4 algorithm is notoriously more
difficult to analyze.
Only the less efficient Lazard's 
algorithm relying on  Macaulay  matrices has a known
complexity upper bound, written in Eq.~\eqref{eq:Mac} below.
The complexity results given hereafter
can be found in~\cite{Laz83} (see also~\cite[Section~1.5]{SpaenPhD} for a nice overview and the references therein).
The standard assumption in the context of~\cite{Laz83} is:
\begin{equation}\label{tag:R}
  \text{    \begin{minipage}{0.85\textwidth}
      $a$ and $b$ form a  homogeneous regular sequence
\end{minipage}
  }
\tag{R}
\end{equation}
The examples of polynomials
$a$ and $b$  in the testing suite are
not homogeneous,
but form a regular sequence.
The complexity of Lazard's method
in the affine case is not well-understood.
Nonetheless, the bounds for the
homogenous case provide some
indications for affine polynomials.
We define the {\em degree of regularity} $\dreg$
of the ideal $\l a,\ b\r$, equal under~\eqref{tag:R}
to be the smallest integer such that
the Hilbert function of $k[x,y]/\l a,\ b\r$
becomes equal to the corresponding Hilbert polynomial.
Then the number of arithmetic operations over $k$
to compute a {\tt grevlex} Gr\"obner
basis of $\l a,\ b\r$ is lower than:
\begin{equation}\label{eq:Mac}
O\left(\binom{2+\dreg}{2}^\omega \right) = O(\dreg^{2\omega})
\end{equation}
We refer to~\cite{GaGe03} for details about the exponent of linear algebra $\omega$.
It is  possibly overestimated in many situations.
Considering that the Macaulay bound $\dreg=\tdeg(a) +\tdeg(b)-1$ holds
under~\eqref{tag:R} we obtain:
$$
O(\tdeg(a)^{2\omega} + \tdeg(a)^\omega \tdeg(b)^\omega + \tdeg(b)^{2\omega})\le O(\tdeg(a)^{2\omega})
$$
The latter inequality follows from $\deg_Y(b)\le\deg_y(a)$.
Adding the cost of FGLM in $\DEG^3$  yields:
\begin{equation}
\label{eq:compl-F4}
O(\tdeg(a)^{2\omega} + \tdeg(a)^\omega \tdeg(b)^\omega + \tdeg(b)^{2\omega} + \DEG^3)\le O(\tdeg(a)^{2\omega} + \DEG^3).
\end{equation}
Even in the affine case,
we can think that the time taken by F4 depends more on $\tdeg(a)$ and $\tdeg(b)$,
whereas the one taken by FGLM depends exclusively on $\DEG$.
The comparison of the timings of the examples
of the 1st family and of the 2nd one supports this claim:
\begin{itemize}
\item Column~5 of Tables~\ref{tab:2-15bits}-\ref{tab:2-63bits}, as well as the plots in rows~1-2-3 of Table~\ref{tab:DEG} in the 1st family show that FGLM occupies most of the computations. We observe that
  the ratio $\DEG/\tdeg(a)$ ranges from 10 to at least 36 (Table~\ref{tab:data1}), putting a higher  cost on FGLM.
\item Column~5 of Tables~\ref{tab:mul-15bits}-\ref{tab:mul-63bits}, as well as the plots in row~4 of Table~\ref{tab:DEG}
  in the 2nd family shows that FGLM takes more or less half of the total time.
   We observe that
  the ratio $\DEG/\tdeg(a)$ ranges from 9 to around 18 (Table~\ref{tab:data1}). The cost of FGLM is then
less important.
\end{itemize}

\begin{table}
\begin{tabular}{cc}
  \includegraphics[height=4.5cm, width=8.5cm]{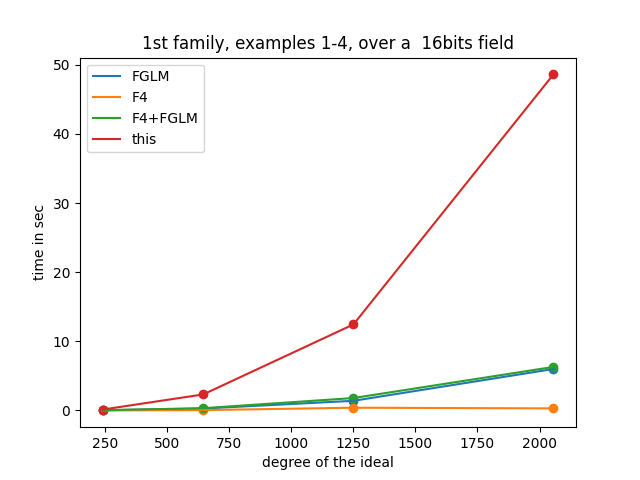}
  &
  \includegraphics[height=4.5cm, width=8.5cm]{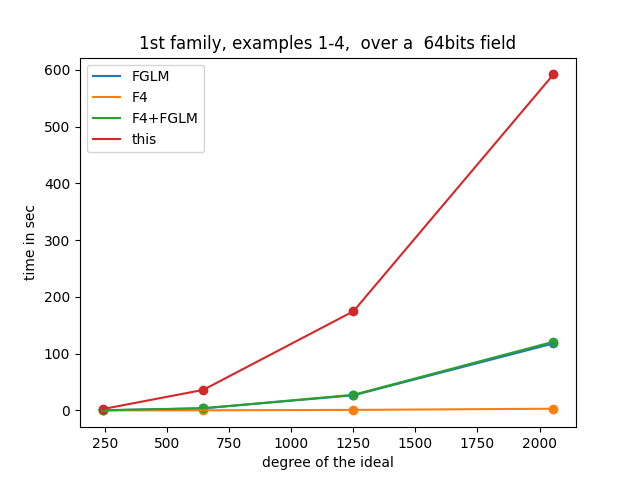}\\
    \includegraphics[height=5cm, width=8.5cm]{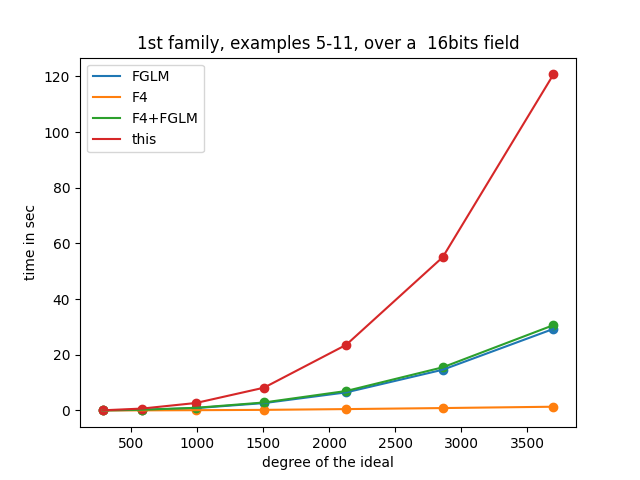}
  &
  \includegraphics[height=5cm, width=8.5cm]{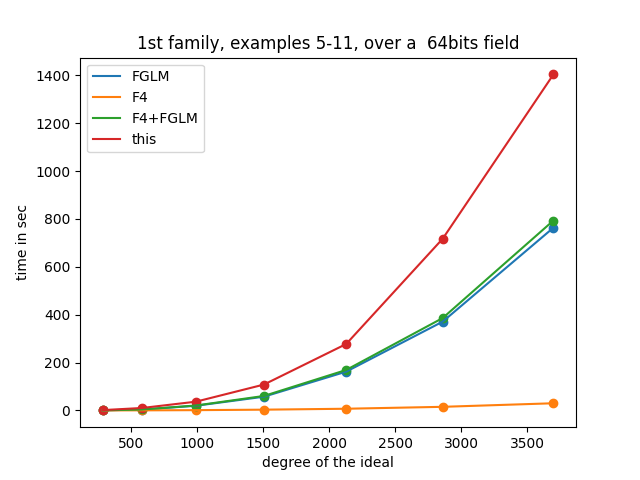}\\
    \includegraphics[height=4.5cm, width=8.5cm]{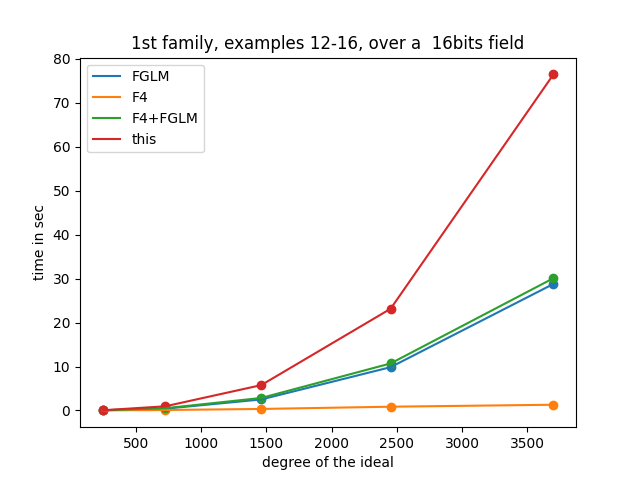}
  &
    \includegraphics[height=4.5cm, width=8.5cm]{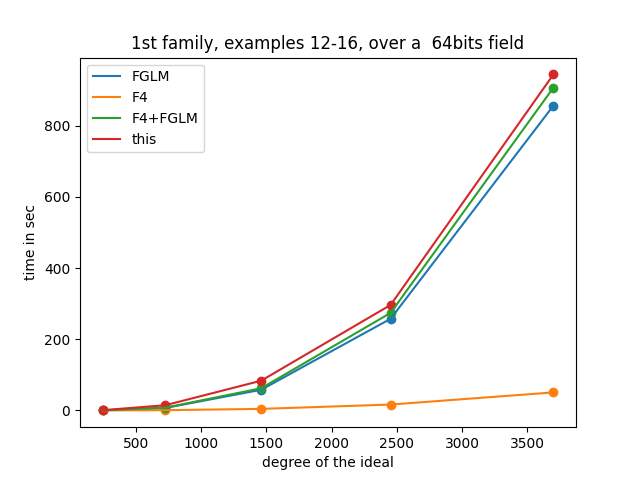}\\
        \includegraphics[height=5cm, width=8.5cm]{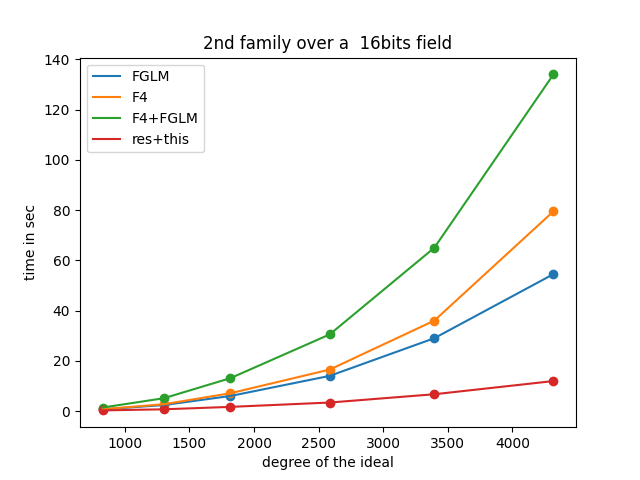}
  &
        \includegraphics[height=5cm, width=8.5cm]{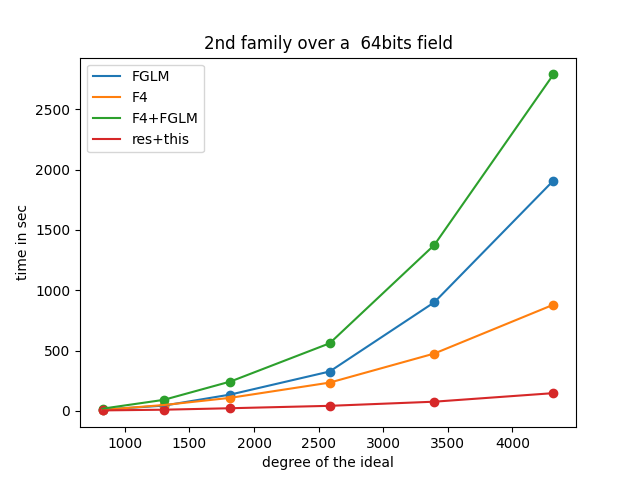}
\end{tabular}
\caption{Input $a,b$ (Columns~3-4-5 of Tables~\ref{tab:2-15bits}-\ref{tab:2-63bits}-\ref{tab:mul-15bits}-\ref{tab:mul-63bits}). Timing against the degree $\DEG$ of the ideal.
  {\sf Left:} 16bits field.\qquad  {\sf Right:} 64bits field. {\sf Row 1:} examples~1-4 of the 1st family. {\sf Row 2:} examples 5-11 of the 1st family.
  {\sf Row 3:} examples 12-16 of the 1st family. {\sf Row 4:} examples of the 2nd family}
\label{tab:DEG}
\end{table}

In comparison, Algorithm~\ref{algo:SToGBD5} ``$\SToGBD5$''
certainly has the subresultant as a central routine.
The number of arithmetic operations in $R=k[x]/\l T\r$ to compute naively the subresultant p.r.s of
$a$ and $b$ modulo $T$, when divisions are possible, is well-known, upper-bounded by $\deg_y(a)\deg_y(b)$.
The number of arithmetic operations in $k$ for peforming naively  any operation in $R$ is $O(\deg(T)^2)$.
Thus the cost of computing the subresultant p.r.s modulo $T$ is within:
\begin{equation}\label{eq:costD5}
O(\deg_y(a) \deg_y(b) \deg(T)^2).
\end{equation}
Without entering into details, other functions to make polynomials
monic and to split the computations do increase the cost,
but unlikely  of a much higher magnitude compared to Eq~\eqref{eq:costD5}.
When $T=\Res_y(a,b)$, the classical upper-bound~\cite[Thm.~6.22]{GaGe03}
$\deg_x(\Res_y(a,b))\le d_x (\deg_y(a)+ \deg_y(b))$ where $d_x=\max(\deg_x(a),\ \deg_y(b))$ yields:
\begin{equation}\label{eq:compl-D5}
O(d_x^2(\deg_y(a)^3\deg_y(b) + \deg_y(a)^2\deg_y(b)^2+\deg_y(a)\deg_y(b)^3))\le O(d_x^2 \deg_y(a)^4).
\end{equation}
The latter inequality is deduced from  $\deg_y(b)\le \deg_y(a)$. 
Thus the parameter $\deg_y(a)$ dominates the total cost.
\begin{itemize}
\item Looking at Columns~3-4
of Table~\ref{tab:data1} (1st family), we observe that the ratio $\tdeg(a)/\deg_y(a)$
ranges from around 2 to 4,
which is quite small. The estimate~\eqref{eq:compl-F4}
involving $\tdeg(a)$ and the estimate~\eqref{eq:compl-D5}
involving $\deg_y(a)$ suggest
that F4 will spend less time
than Algorithm~\ref{algo:SToGBD5}.
\item  Whereas this ratio in Columns~3-4 of Table~\ref{tab:data2} (2nd family) ranges from around 12 to 19,
which is quite bigger.
This indicates that Algorithm~\ref{algo:SToGBD5} will
spend less time than F4.
\end{itemize}
 This is certainly one main
 explanation behind the difference in the  timings of ``$\SToGBD5$'' and F4$+$FGLM
 observed in Columns~4-5 of Tables~\ref{tab:2-15bits}-\ref{tab:2-63bits}-\ref{tab:mul-15bits}-\ref{tab:mul-63bits},
 and in Table~\ref{tab:DEG}.

\section{Concluding Remarks}\label{sec:concl}
The new algorithms brought in this work open the way to various generalizations
and some improvements.

\paragraph{Subresultant p.r.s of $a$ and $b$}
The line of works~\cite{gonzalez1996improved,diochnos2009asymptotic,BoLaMoPoRoSa16,LaPoRo17,li2011modpn} that  produces a triangular decomposition
of the set of solutions (not ideal preserving) of $a$ and $b$, 
works directly on the subresultant p.r.s of $a$ and $b$.
The present article requires a modulus $T$ and works on the p.r.s modulo $T$.
Although this kind of situations is necessary for more general incremental Euclidean algorithm-based
decomposition methods, starting with the p.r.s of $a$ and $b$ directly
is desirable. Besides the discussion of Section~\ref{sec:compl}
suggests enhanced performances.

\paragraph{Decomposition along the $y$-variable}
In the case of a radical ideal, decomposing it along the $y$-variable is well-known.
This is a special instance of dynamic evaluation in two variables. It amounts to compute
a ``gcd'' in $y$ of two bivariate polynomials. The inversions occurring in the coefficients ring $k[x]$
required by the Euclidean
algorithm are managed by dynamic evaluation in one variable, namely the $x$-variable.
In this way, dynamic evaluation in two variables boils down to that of one variable.  
To do the same for non-radical lexGbs, two complications occur: first, Hensel lifting comes into play
to compensate the division required to make nilpotent polynomials non-nilpotent, second,
the output may be a lexGb not necessarily a triangular set.
Computing a certain decomposition (as introduced in~\cite{Da17})
of the ideal $\l a,\ b,\ p^e\r$ and decomposing a minimal lexGb of $\l a,\ b, \ p^e\r$, necessarily
along the $y$-variable, does not differ much. 
Note that Lazard's structural theorem applies in this situation too.
This constitutes a necessary step towards the generalization of this work
to more than two variables.

\paragraph{More than two variables}
Besides a dynamic evaluation in two variables,
a form of structure theorem for lexGbs of three variables or more 
becomes necessary.
For now,  it is known to hold in the radical case~\cite{FeRaRo06,MaMo03},
not sufficient for our purpose since we  target  non-radical ideals.
Evidences that a broad class of non-radical lexGbs also holds a factorization pattern
are coming out~\cite{wang2016connection}. A key step in that direction lies in the development
of CRT-based algorithms that reconstruct a lexGb from its  primary components~\cite{Da17macis-short}.

Beyond this first step,
the idea would be a recursive algorithm on the number of variables
in order to  ultimately reduce  to the bivariate case treated here.

\paragraph{Complexity}
Instead of standard quadratic-time subresultant p.r.s,
can half-gcd based algorithm be adapted ?
This would motivate to analyze the  complexity carefully.
The work of~\cite{MoSc16} paves the way to this direction.
However, the additional algorithms appearing here compared to~\cite{MoSc16},
which is already quite sophisticated, complicate it furthermore,
making the complexity analysis an interesting challenge.

\bibliographystyle{plain}
\bibliography{/home/xav/Volume/GoogleDrive/main}

\bigskip

\noindent {\bf \Large Appendix}
\medskip

In below, is proposed  a version of the Chinese Remainder Theorem. Although probably not new,
finding a precise reference appears to be not obvious.

\begin{Lem}\label{lem:id}
Consider a family of ideals $(I_\ell)_{\ell \in L}$ of $k[x,y]$ and a polynomial $g\in k[x,y]$.
Take a monic generator $h_\ell$ of  $(I_\ell \cap k[x])$  and assume that the polynomials $h_\ell$'s are
pairwise coprime. Then  $\prod_{\ell \in L} (I_\ell + \l g\r) = (\prod_{\ell\in L} I_\ell) + \l g\r$.
\end{Lem}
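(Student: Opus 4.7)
The strategy is the two-sided inclusion, with the non-trivial ingredient being that the coprimality of the $h_\ell$'s promotes the ideals $J_\ell := I_\ell + \l g\r$ to being pairwise comaximal in $k[x,y]$, which lets CRT convert a product into an intersection.

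\emph{Inclusion $\subseteq$.} A generic element of $\prod_{\ell\in L} J_\ell$ is a finite sum of products of the form $\prod_{\ell\in L}(i_\ell + r_\ell g)$ with $i_\ell \in I_\ell$ and $r_\ell \in k[x,y]$. Expanding this product by multilinearity gives exactly one term $\prod_\ell i_\ell$, which lies in $\prod_\ell I_\ell$, and a sum of other terms, each containing at least one factor $r_\ell g$, which therefore lie in $\l g\r$. Hence each summand lies in $\prod_\ell I_\ell + \l g\r$, proving the inclusion.

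\emph{Inclusion $\supseteq$.} Obviously $\prod_\ell I_\ell \subseteq \prod_\ell J_\ell$ since $I_\ell\subseteq J_\ell$, so it suffices to show $g \in \prod_\ell J_\ell$. Because $h_\ell$ and $h_{\ell'}$ are coprime in $k[x]$ for $\ell\neq\ell'$, there is a B\'ezout identity $u h_\ell + v h_{\ell'} = 1$ in $k[x]$, and since $h_\ell\in I_\ell\subseteq J_\ell$ and likewise for $h_{\ell'}$, this yields $1\in J_\ell + J_{\ell'}$. Thus the finite family $(J_\ell)_{\ell\in L}$ is pairwise comaximal in $k[x,y]$. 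A standard consequence of CRT for pairwise comaximal ideals (proved by induction on $|L|$: if $J_\ell + J_{\ell'} = \l 1\r$ for all $\ell'\neq\ell$, then $J_\ell + \prod_{\ell'\neq\ell} J_{\ell'}=\l 1\r$, and $J_\ell \cap \prod_{\ell'\neq\ell} J_{\ell'} = J_\ell \prod_{\ell'\neq\ell} J_{\ell'}$) gives $\bigcap_\ell J_\ell = \prod_\ell J_\ell$. Since $g \in J_\ell$ for every $\ell$ trivially, we conclude $g\in\bigcap_\ell J_\ell = \prod_\ell J_\ell$.

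\emph{Main obstacle.} There is no genuine obstacle; the proof is a direct CRT argument. The only point that deserves care is promoting ``the $h_\ell$'s are pairwise coprime in $k[x]$'' to ``the $J_\ell$'s are pairwise comaximal in $k[x,y]$,'' but this is immediate from the B\'ezout identity in $k[x]$ combined with $h_\ell\in I_\ell\subseteq J_\ell$. The finiteness of $L$, implicit in the contexts (Proposition~\ref{prop:SToGBA} and Theorem~\ref{th:SToGBD5}) where this lemma is invoked, is used to iterate pairwise comaximality into the stronger statement $J_\ell + \prod_{\ell'\neq\ell}J_{\ell'}=\l 1\r$.
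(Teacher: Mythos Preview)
Your proof is correct and follows essentially the same route as the paper: both establish $\subseteq$ by multilinear expansion and $\supseteq$ by using the pairwise coprimality of the $h_\ell$'s to show $g\in\prod_\ell J_\ell$. The only difference is packaging: the paper constructs the CRT idempotents $H_\ell F_\ell$ explicitly and writes $g=\sum_\ell H_\ell F_\ell g$ with each summand in $(\prod_{\ell'\ne\ell}I_{\ell'})\l g\r\subseteq J$, whereas you invoke the abstract fact that pairwise comaximality gives $\prod_\ell J_\ell=\bigcap_\ell J_\ell\ni g$ --- a slightly cleaner but equivalent argument.
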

\begin{proof}
Write $J = \prod_{\ell \in L} (I_\ell + \l g\r)$.
Let $L_1 \cup L_2 = L$ be a partition of the set of indices $L$ and associate to it the ideal
$(\prod_{\ell\in L_1} I_\ell) \l g\r^{\# L_2}$ where $\# L_2$ stands for the cardinal of $L_2$.
We then have the equality of ideals:
\begin{equation}\label{eq:J}
J = \sum_{{\rm partitions~} (L_1,\ L_2) {~\rm of~} L} \qquad (\prod_{\ell\in L_1} I_\ell) \l g\r^{\# L_2}.
\end{equation}
When $L_2=\emptyset$, the ideal associated to the partition $(L,\ \emptyset)$ of the set of indices $L$
is then $A:=\prod_{\ell\in L} I_\ell$.
As soon as $L_2\not=\emptyset$, the ideal associated to the partition $(L_1,\ L_2)$
is $ (\prod_{\ell\in L_1} I_\ell ) \l g \r ^{\# L_2}\subset \l g\r$.
Therefore we have:
$$
J \subset \l g\r + A.
$$
Since $A\subset J$, it suffices to prove that $g\in J$. To this end, consider
the monic polynomial $h_\ell$ generating the ideal $I_\ell \cap k[x]$. Let 
$$
H_{\ell'} = \prod_{\ell \in L,\ \ell\not=\ell'} h_\ell,\qquad \text{for all } \ell'\in L.
$$
The polynomials $H_{\ell'}$s are product of polynomials coprime with $h_{\ell'}$, hence
is coprime with $h_{\ell'}$ hence invertible modulo $h_{\ell'}$.
Write $F_{\ell'} := H_{\ell'}^{-1} \bmod h_{\ell'}$ in $k[x]$ of degree smaller than that of $h_{\ell'}$.
In particular $\deg_x(H_{\ell'} F_{\ell'}) < \deg_x(\prod_{\ell\in L} h_\ell):=D$ for all $\ell'\in L$. 
We have:
\begin{equation}\label{eq:H}
\sum_{\ell\in L} H_{\ell} F_{\ell} =1.
\end{equation}
Indeed, the polynomial $H=\sum_{\ell\in L} H_{\ell} F_{\ell}$ is of degree in $x$ smaller than 
$D:=\deg_x(\prod_{\ell\in L} h_\ell)$.
Because $H_{\ell'} \equiv  0 \bmod h_\ell$ as soon as $\ell'\not=\ell$,
we have  $H \equiv H_\ell F_\ell \bmod h_\ell$ and since $F_\ell \equiv H_\ell^{-1} \bmod h_\ell$ we have 
$H_\ell F_\ell \equiv 1 \bmod h_\ell$, whence $H\equiv 1 \bmod h_\ell$.
Since the $h_\ell$ are pairwise coprime, the Chinese remainder theorem 
implies that $H \equiv 1 \bmod (\prod_{\ell \in L} h_\ell)$.
But since $\deg_x(H)<D$ this means $H=1$. This ends the proof of Eq.~\eqref{eq:H}.

Consider the partitions $[(L\setminus \{\ell\},\ \{\ell\})]_{\ell\in L}$ of $L$. Their associated ideals
are $ (\prod_{\ell'\in L,\ \ell'\not=\ell} I_{\ell'}) \l g\r$, for $\ell \in L$.
Observe that $H_\ell g \in (\prod_{\ell'\in L,\ \ell'\not=\ell} I_{\ell'} ) \l g\r$,
hence $H_\ell F_\ell g \in (\prod_{\ell'\in L,\ \ell'\not=\ell} I_{\ell'}) \l g\r$.
Therefore $\sum_{\ell \in L} H_\ell F_\ell g \in \sum_{\ell \in L} (\prod_{\ell'\in L,\ \ell'\not=\ell} I_{\ell'}) \l g\r$.
By Eq.~\eqref{eq:H}, we obtain $g\in \sum_{\ell \in L} (\prod_{\ell'\in L,\ \ell'\not=\ell} I_{\ell'}) \l g\r$.
Finally, Eq.~\eqref{eq:J} shows that $g\in J$.
\end{proof}

\end{document}